\newtheorem{myth}{Theorem}
\newtheorem{mylem}{Lemma}
\newtheorem{mydef}{Definition}
\def\ds{\displaystyle}
\begin{document}

\title{Analysis and numerical simulation of the nonlinear beam equation with moving ends }
\author{N.~Quintino$^{1}$,~M.~A.~Rincon$^1$\\
\thanks{E-mails:~natanaelquintino@gmail.com,~rincon@dcc.ufrj.br}\\
%\thanks{The third author is partially supported by CNPq-Brazil}
$^1$\small\it
    Instituto de Matem\'{a}tica, Universidade Federal do Rio de Janeiro, Rio de Janeiro, Brazil}

\date{}
\maketitle

\begin{abstract}

The numerical analysis for the  small amplitude motion of an elastic beam with internal damping is investigated in domain with moving ends. An efficient numerical method is constructed to solve this moving
boundary problem. The stability and convergence of the method is studied, and the errors of both the semi-discrete and fully-discrete schemes are derived, using Hermite's polynomials as a base function, proving that the method has order of quadratic convergence in space and time.

Numerical simulations using the finite element method associated with the finite difference method (Newmark's method) are employed, for one-dimensional and two-dimensional cases. To validate the theoretical results, tables and figures are shown comparing approximate and exact solutions. In addition, numerically the uniform decay rate for energy and the order of convergence of the approximate solution are also shown.

\bigskip
\noindent{\bf Keywords:} Beam equation; Hermite's polynomials, Non-cylindrical domain
Error estimates, Order of convergence, Numerical simulation, Newton's method, Newmark's method.

\bigskip
\noindent{\bf AMS Subject Classification:} 35L20; 35R37; 65M60; 65M06.
\end{abstract}

\section{Introduction}\label{sec:intro}

The  equation of motion of a thin beam with weak-internal damping undergoing
cylindrical bending can be written as

	\begin{equation}\label{edp:movel}
		u''(x,t) - \left(\zeta_0 + \zeta_1\int_{\Omega_t}\big|\nabla u(x,t)\big|^2 dx\right)\Delta u(x,t) + \Delta^2 u(x,t) + \nu u'(x,t) = 0, \mbox{ in } Q_t,
	\end{equation}
where $\,u\,$ is the transverse displacement, $x\in\mathbb{R}^n$ is the vector of spatial coordinates, $\,t\,$ is the time.  The aerodynamic damping term is denoted by $\,\nu,\,$  $\,\zeta_1\,$ is the nonlinear stiffness of beam, $\,\zeta_0\,$ is an in-plane tensile load, and $\,(x, t)\,$ belongs to the noncylindrical domain, defined by
\begin{equation}\label{front}
		Q_t = \big\{(x,t)\in\mathbb{R}^{n+1}; x=K(t)y,~y\in\Omega,~0<t<T\big\},
\end{equation}
 with $\Omega=(0,1)^n\subset\mathbb{R}^n$ is an open, limited and regular and
 $K:[0,\infty[\longrightarrow\mathbb{R}$ is a $C^2$- real function.
All quantities are physically nondimensionalized, $\,\nu,\,$ $\,\zeta_1\,$ are fixed positive and $\,\zeta_0$ is not necessarily positive.	

We will study the equation \eqref{edp:movel} with zero boundary conditions $\forall t\in [0, T[$,
as follows,
\begin{equation}\label{edp:condFronteiraMovel}
		u(x,t) = 0 \quad\mbox{and}\quad \nabla u \equiv 0,~\mbox{on}~\partial\Omega_t\times[0,T[,
\end{equation}
where $\partial\Omega_t = \{x=K(t)y;~y\in\partial\Omega\}$.

In the absence of the damping term $\nu~\!u_t(x,t)$, many authors have already studied Cauchy's problems and the mixed problems associated with the equation \eqref{edp:movel} on bounded and unlimited cylinders. The existence and uniqueness of solution  was demonstrated in \cite{ExistUnicBeam}.
	
	The Cauchy problem associated with the \eqref{edp:movel} equation in abstract terms on a Hilbert space is studied, among other authors, by \cite {AbstractBeam1,AbstractBeam2}. In addition, in the last
	 were established results on existence, unicity and asymptotic stability of the solution.
		
For the one-dimensional case, omitting the term $u_{xxxx}$ and considering the cylinder $Q =\Omega\times [0,\infty[$, with $\Omega\subset\mathbb {R} $ open and limited, we obtain the Kirchhoff equation with internal damping, namely,		
		
	\begin{equation}\label{intro:Kirchhoff}
		u''(x,t) - \left(\zeta_0 + \zeta_1\int_{\Omega}\big|u_x(t)\big|^2 dx\right)u_{xx}(x,t) + \nu u'(x,t) = 0, \mbox{ in } Q.
	\end{equation}
	
The equation \eqref{intro:Kirchhoff} has been extensively studied by a wide variety of authors on
$n$-dimensional cases and generic mathematical models defined in Hilbert spaces. The existence of a local and global solution can be found in several physical-mathematical contexts, such as \cite{Kirchhoff1, Kirchhoff2}.

In the article \cite {Kirchhoff2} a new model associated with the Kirchhoff equation \eqref{intro:Kirchhoff} was introduced, called the Medeiros-Kirchhoff equation, given by	
		
	\begin{align*}
		u''(x,t) - \left(a(t) + b(t)\int_{\Omega}\big|u_x(t)\big|^2 dx\right)u_{xx}(x,t) = 0, \mbox{ in } Q_t.
	\end{align*}
	
	There are several areas that work with the application of the Beam equation, for example, in civil and naval engineering, as well as in the aerospace area as the reaction of rocket and satellite metal structures to various space situations, as can be observed in the articles \cite {nasa1, nasa2}. In addition to these areas, it is possible to associate Maxwell's equations, which govern electromagnetic theory, and the Euler-Bernoulli Beam Equation, in the study presented by article \cite {maxwellBeam}

The equation \eqref{edp:movel} follows the Euler-Bernoulli Beam model. In addition to this model, there are other models, extensions of the Euler-Bernoulli model, which consider rotational movements besides transverse ones, being possible to create models closer to reality. The article \cite{studyBeamEquations} presents a more detailed study on three of these models.

The existence and uniqueness of the solution, the asymptotic decay and the numerical simulations of the beam  equation with movinf boundary were studied in \cite{Raquel} for the one-dimensional case.

In this paper, we present a family of numerical method for the one-dimensional and two-dimensional cases, based on
 the finite element method and the finite difference method. In addition, we are doing numerical analysis for both the semi-discrete problem and the fully discrete problem, showing that the order of convergence is quadratic in space and time. For this we will use the Faedo-Galerkin method, using the Hermite polynomials as base functions. In addition, the nonlinear system associated with the system of ordinary differential equations is being solved by Newton's method and for temporal discretization is applying the Neumark's method, where we will show the convergence of the approximate solution to a family of numerical methods depending on the choice of $\theta\in]1/4; 1]$.
 Numerical simulations are presented as examples, with different types of boundary and tables showing the order of numerical convergence. In addition, we present the asymptotic decay, and the graphs of the solutions for the one-dimensional and two-dimensional cases.

\section{Analytical results} \label{sec:resultAnal}

Our section will present, without demonstrating, the existence and uniqueness results of the mixed problem solution for the equation \eqref{edp:movel}, considering the boundary conditions
 \eqref{edp:condFronteiraMovel}, given by:

	\begin{equation}\label{prob:movel}
		\left\{
		\begin{aligned}
			&\begin{aligned}
				u''(x,t) - \left(\zeta_0 + \zeta_1\int_{\Omega_t}\big|\nabla u(x,t)\big|^2 dx\right)\Delta u(x,t) + \Delta^2 u(x,t) + \nu u'(x,t) = 0, \mbox{in}~ Q_t
			\end{aligned}\\	
			&~\!u(x,0) = u_0(x),~~ u'(x,0)=u_1(x),~\mbox{in}~ \Omega_0,\\
			&~\!u(x,t) = 0~\mbox{and}~ \nabla u \equiv 0,~\mbox{in}~ \partial\Omega_t\times[0,T[,
		\end{aligned}
		\right.
	\end{equation}
 where    $Q_t$ is defined in \ref{front} and  satisfying the hypotheses
	% ============================================================= %
	\begin{equation}\label{hip:dominio2D_H1}
		\left\{
		\begin{aligned}
			&Q_t \subset\Omega_t\times]0,T[,~ \mbox{with } \Omega_t\subset\mathbb{R}^n~ \mbox{open and limited with } 0\in{\Omega_t},\\
			&K  \in C^2\big([0,+\infty[\big),~ \mbox{with } 0<K_0\leq K(t)\leq K_1,\quad 0 < K'(t)\leq K_2,\\
		\end{aligned}
		\right.
		\tag{H1}
	\end{equation}			
	where $K_0,K_1$ and  $K_2$ are positive real constants.
	% ============================================================= %
	
	Let us consider the diffeomorphism $\tau : Q_t \rightarrow Q=(0,1)^n\times(0,T)$, defined
by $\tau (x, t) = (y, t)$, with $x=K(t)y$ . Therefore, the change of variable $u(x, t) = v(y, t)$  transforms the problem\eqref{prob:movel} into the equivalent problem with cylindrical boundary
	\begin{equation}\label{prob:fixa}
		\left\{
		\begin{aligned}
			&\begin{aligned}
				&v''(y,t) -b_1(t)\big|\nabla v\big|_0^2 \Delta v(y,t) + b_2(t)\Delta^2 v(y,t) + \nu~\!v'(y,t) + a^{(2)}_{ij}(y,t)\nabla_{y_i,y_j}v(y,t)\\
				&\quad- a_i^{(1)}(y,t)\Delta_{y_i} v(y,t) - a^{(3)}_i(y,t)\nabla_{y_i}v(y,t) - a^{(4)}_i(y,t)\nabla_{y_i}v'(y,t)  = 0, \mbox{ in } Q\\
			\end{aligned}\\	
			&~v(y,0) = v_0(y),~~ v'(y,0)=v_1(y)\quad\mbox{in } ~\Omega,\\
			&~v(y,t) = 0~\mbox{and}~ \nabla v \equiv 0,~\mbox{in}~ \partial\Omega\times[0,T[,
		\end{aligned}
		\right.
	\end{equation}
	where, for simplicity, we denote $K = K(t)$ and
	\begin{equation}
		\label{def:asEbs}
		\begin{aligned}
			&b_1(t) = \frac{\zeta_1}{K^4},~b_2(t) = \frac{1}{K^4},~a^{(1)}_i(y,t) = \frac{1}{K^2}\Big[\zeta _0 - 4(y_iK')^2\Big],~a^{(2)}_{ij}(y,t) = 4y_iy_j{(K'/K)^2},\\
			&a^{(3)}_i(y,t) = \frac{1}{K^2}\Big[2y_i(K')^2 - y_iK(\nu K' + K'')\Big],~a^{(4)}_i(y,t) = -2y_i({K'}/{K}).			
		\end{aligned}
	\end{equation}

In the following, we present the results on the existence and uniqueness of the weak solution of problems \eqref{prob:movel} and \eqref{prob:fixa} and asymptotic decay, whose demonstrations can be found in \cite{viga1D} for the one-dimensional case and \cite{viga2D} for the two-dimensional case.

	\begin{myth}(Existence and Uniqueness of solution in nocylindrical domain.)\label{teo:EeUmovel2D}
		
	Let $u_0\in H_0^2(\Omega_0)$, $u_1\in L^2(\Omega_0)$ and the hypothesis \eqref{hip:dominio2D_H1}. Then there is a unique weak solution $u:\widehat{Q}\longrightarrow \mathbb{R}$ for the problem
	\eqref{prob:movel} satisfying the following conditions:
		\begin{align*}
			&1.\quad u  \in  L^2(0,T;H_0^2(\Omega_t)), \\
			&2.\quad u' \in  L^2(0,T;  L^2(\Omega_t))
		\end{align*}
		and the weak formulation of \eqref{prob:movel}$_1$ is verified in the sense of
		 $L^2(0,T;H^{-2}(\Omega_t))$.		
	\end{myth}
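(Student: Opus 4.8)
The plan is to transfer the problem to the fixed cylinder and run a Faedo--Galerkin scheme there. Since hypothesis \eqref{hip:dominio2D_H1} makes $\tau$ a $C^2$-diffeomorphism with $K$ bounded above and away from zero, solving \eqref{prob:movel} on $\widehat{Q}$ is equivalent to solving the cylindrical problem \eqref{prob:fixa} on $Q=\Omega\times(0,T)$ with $v_0\in H_0^2(\Omega)$, $v_1\in L^2(\Omega)$; moreover all the coefficients in \eqref{def:asEbs} are bounded on $[0,T]$ and $b_1,b_2$ are bounded below by positive constants. So I would first establish existence and uniqueness for \eqref{prob:fixa} with the regularity $v\in L^2(0,T;H_0^2(\Omega))$, $v'\in L^2(0,T;L^2(\Omega))$, and then recover the statement for $u$ by composing with $\tau^{-1}$ at the very end.

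For existence I would use the special basis $\{w_j\}$ of $H_0^2(\Omega)$ furnished by the Hermite polynomials, put $V_m=\mathrm{span}\{w_1,\dots,w_m\}$ and look for $v_m(t)=\sum_{j=1}^m g_{jm}(t)w_j$ solving the projection of \eqref{prob:fixa}$_1$ onto $V_m$ with $v_m(0)\to v_0$ in $H_0^2(\Omega)$ and $v_m'(0)\to v_1$ in $L^2(\Omega)$. Since $|\nabla v_m|_0^2$ depends locally Lipschitz\--continuously on the $g_{jm}$, this second-order ODE system has a local solution on some $[0,t_m)$ by Carath\'{e}odory's theorem. Testing the approximate equation with $v_m'(t)$ yields a differential identity whose principal part is
\[
\tfrac12\tfrac{d}{dt}\Big(|v_m'(t)|_0^2 + b_2(t)|\Delta v_m(t)|_0^2 + \tfrac{b_1(t)}{2}|\nabla v_m(t)|_0^4\Big) + \nu\,|v_m'(t)|_0^2 ,
\]
while the remaining contributions --- those carrying $b_1'$, $b_2'$ and the lower-order coefficients $a^{(1)}_i,a^{(2)}_{ij},a^{(3)}_i,a^{(4)}_i$ --- are, after an integration by parts in the first-order-in-$v_m'$ term and the elliptic bound $\|D^2v\|_0\le C\|\Delta v\|_0$ on $H_0^2(\Omega)$, dominated by $E(t):=|v_m'(t)|_0^2+b_2(t)|\Delta v_m(t)|_0^2+\tfrac{b_1(t)}{2}|\nabla v_m(t)|_0^4$ via Young's inequality and the lower bound $b_2(t)\ge b_0>0$ (in particular the term carrying the possibly negative $\zeta_0$ causes no trouble). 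Gronwall's lemma applied to $E$ then bounds $v_m$ in $L^\infty(0,T;H_0^2(\Omega))$ and $v_m'$ in $L^\infty(0,T;L^2(\Omega))$ uniformly in $m$, so $t_m=T$.

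Extracting a subsequence, $v_m\rightharpoonup v$ weak-$*$ in $L^\infty(0,T;H_0^2)$ and $v_m'\rightharpoonup v'$ weak-$*$ in $L^\infty(0,T;L^2)$, and by the Aubin--Lions lemma (using the compact embedding $H_0^2(\Omega)\subset H^1(\Omega)$ together with $H^1(\Omega)\subset L^2(\Omega)$) $v_m\to v$ strongly in $L^2(0,T;H^1(\Omega))$ and in $C([0,T];L^2(\Omega))$. This strong $H^1$-convergence is precisely what is needed to pass to the limit in the Kirchhoff term, since $|\nabla v_m(\cdot)|_0^2\to|\nabla v(\cdot)|_0^2$ in $L^2(0,T)$ while $\Delta v_m\rightharpoonup\Delta v$ in $L^2(Q)$; the linear terms pass to the limit using weak convergence and boundedness of the coefficients. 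I would then check $v(0)=v_0$ and $v'(0)=v_1$ in the usual way (integration by parts in time against test functions vanishing at $t=T$); this gives a weak solution, and since every other term of \eqref{prob:fixa}$_1$ lies in $L^2(0,T;H^{-2}(\Omega))$, so does $v''$, and the equation holds there.

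The genuinely delicate step is uniqueness, because $v'$ only lies in $L^2(0,T;L^2)$, so the difference $w=v-\bar v$ of two solutions with the same data cannot be tested against $w'$ directly. I would use the classical Lions--Ladyzhenskaya device: for fixed $s\in(0,T)$ set $\psi(t)=-\int_t^s w(\sigma)\,d\sigma$ for $t\le s$ and $\psi(t)=0$ for $t\ge s$, test the equation satisfied by $w$ against $\psi$, integrate by parts in $t$, split the nonlinear difference as $b_1|\nabla v|_0^2\Delta w+b_1\big(|\nabla v|_0^2-|\nabla\bar v|_0^2\big)\Delta\bar v$, bound everything with the a priori estimates, and close with Gronwall to conclude $w\equiv0$. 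Finally, the change of variables $u(x,t)=v\big(x/K(t),t\big)$ carries existence, uniqueness and the regularity back to $u\in L^2(0,T;H_0^2(\Omega_t))$, $u'\in L^2(0,T;L^2(\Omega_t))$ on the non-cylindrical domain, using that $K\in C^2$ is bounded away from zero. I expect the main obstacles to be this low-regularity uniqueness argument and the careful control of the time-dependent coefficients $b_i'$, $a^{(k)}_i$, $a^{(2)}_{ij}$ in the energy estimate.
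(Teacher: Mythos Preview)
The paper does not actually prove this theorem: it explicitly states that the existence, uniqueness and decay results are presented ``without demonstrating'' them, and refers to \cite{viga1D} and \cite{viga2D} for the proofs. So there is no in-paper proof to compare against; your proposal is being measured against the standard literature argument that those references carry out.

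That said, your outline is the correct one and matches the approach of the cited works: reduce to the fixed cylinder via the $C^2$-diffeomorphism $\tau$, run Faedo--Galerkin on \eqref{prob:fixa}, obtain the a~priori energy bound by testing with $v_m'$ and absorbing the lower-order and time-derivative-of-coefficient terms into the biharmonic energy via Gronwall, pass to the limit with Aubin--Lions (the strong $H^1$ convergence is exactly what handles the Kirchhoff nonlinearity), and transfer back to $\widehat{Q}$. Your identification of the low-regularity uniqueness as the delicate point, and your use of the Ladyzhenskaya--Lions integrated test function $\psi(t)=-\int_t^s w(\sigma)\,d\sigma$, is also the right device here.

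One small correction: the Hermite polynomials in this paper are the \emph{finite-element} shape functions used for the numerical scheme on $V_h$, not a Hilbert basis of $H_0^2(\Omega)$. For the analytical Galerkin step you want the spectral basis of the biharmonic operator (or any complete orthonormal system in $H_0^2(\Omega)$), which is what the cited references use; this does not affect the structure of your argument, only the choice of $\{w_j\}$.
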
 	
	
	\begin{myth}(Existence and Uniqueness of solution in cylindrical domain)\label{teo:EeUfixa2D}
		
		Let $v_0\in H_0^2(\Omega_0)$, $v_1\in L^2(\Omega_0)$ and the hypothesis \eqref{hip:dominio2D_H1}. Then , Then there is a unique weak solution $v:Q\longrightarrow \mathbb{R}$ for the problem \eqref{prob:movel} satisfying the following conditions:
		\begin{align*}
			&1.\quad v  \in  L^2(0,T;H_0^2(\Omega)), \\
			&2.\quad v' \in  L^2(0,T;  L^2(\Omega))
		\end{align*}
		and the weak formulation of \eqref{prob:fixa}$_1$ is verified in the sense of
		 $L^2(0,T;H^{-2}(\Omega))$.		
	\end{myth}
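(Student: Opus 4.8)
The plan is to prove the cylindrical statement directly by the Faedo--Galerkin method; Theorem~\ref{teo:EeUmovel2D} then follows immediately, since under \eqref{hip:dominio2D_H1} the change of variables $u(x,t)=v(y,t)$, $x=K(t)y$, is a bijection between the two solution classes --- the Jacobian $K(t)^n$ and its reciprocal are bounded by \eqref{hip:dominio2D_H1}, so $L^2(0,T;H_0^2(\Omega_t))$ corresponds to $L^2(0,T;H_0^2(\Omega))$, and likewise for the $v'$-spaces and the duality pairings. So it is enough to treat \eqref{prob:fixa}. I would fix a Galerkin basis $\{w_j\}_{j\ge 1}$ of $H_0^2(\Omega)\cap H^4(\Omega)$ --- the eigenfunctions of the clamped biharmonic operator, or the Hermite polynomials used later in the paper --- set $V_m=\mathrm{span}\{w_1,\dots,w_m\}$, and look for $v_m(t)=\sum_{j=1}^m g_{jm}(t)\,w_j$ solving the ODE system obtained by testing \eqref{prob:fixa}$_1$ against each $w_j$, with $v_m(0),v_m'(0)$ the $V_m$-projections of $v_0,v_1$. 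The coefficients $b_1,b_2,a^{(1)}_i,a^{(2)}_{ij},a^{(3)}_i,a^{(4)}_i$ are continuous on $[0,T]$ by \eqref{hip:dominio2D_H1}, and the nonlinearity is polynomial in the $g_{jm}$, so standard ODE theory gives a local solution on some $[0,t_m)$.

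The heart of the argument is the energy estimate. Testing the $v_m$-equation with $v_m'$ and integrating by parts, the biharmonic term contributes $\tfrac12\tfrac{d}{dt}\!\big(b_2(t)\|\Delta v_m\|^2\big)$ up to a $b_2'(t)\|\Delta v_m\|^2$ remainder, the Kirchhoff term contributes $\tfrac14\tfrac{d}{dt}\!\big(b_1(t)|\nabla v_m|_0^4\big)$ up to a $b_1'(t)|\nabla v_m|_0^4$ remainder, and the damping gives $\nu\|v_m'\|^2\ge 0$. By \eqref{hip:dominio2D_H1} the functions $b_1',b_2',a^{(1)}_i,a^{(2)}_{ij},a^{(3)}_i,a^{(4)}_i$ are bounded on $Q$ in terms of $K_0,K_1,K_2$; so, after Cauchy--Schwarz, Young's inequality, and the Poincar\'e-type estimate $\|\nabla\varphi\|\le C\|\Delta\varphi\|$ on $H_0^2(\Omega)$ --- which also absorbs the possibly negative $\zeta_0$-contribution into $b_2\|\Delta v_m\|^2$ --- one obtains $\tfrac{d}{dt}E_m(t)\le C\,E_m(t)$ with $E_m(t)=\|v_m'(t)\|^2+\|\Delta v_m(t)\|^2+|\nabla v_m(t)|_0^4$. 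Gronwall's lemma bounds $E_m$ uniformly on $[0,T]$, hence $t_m=T$ and $\{v_m\}$ is bounded in $L^\infty(0,T;H_0^2(\Omega))$, $\{v_m'\}$ in $L^\infty(0,T;L^2(\Omega))$; reading $v_m''$ off the equation then bounds $\{v_m''\}$ in $L^2(0,T;H^{-2}(\Omega))$.

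To pass to the limit, extract a subsequence with $v_m\rightharpoonup v$ weak-$\ast$ in $L^\infty(0,T;H_0^2)$, $v_m'\rightharpoonup v'$ weak-$\ast$ in $L^\infty(0,T;L^2)$ and $v_m''\rightharpoonup v''$ in $L^2(0,T;H^{-2})$. By the Aubin--Lions lemma $v_m\to v$ strongly in $C\big([0,T];H_0^1(\Omega)\big)$, so $|\nabla v_m|_0^2\to|\nabla v|_0^2$ uniformly in $t$ and the nonlinear term converges, $b_1|\nabla v_m|_0^2\Delta v_m\rightharpoonup b_1|\nabla v|_0^2\Delta v$ in $L^2(0,T;H^{-2})$. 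Every other term is affine in $v_m$ or $v_m'$ with a bounded time-dependent coefficient, so it converges in the same sense, the weak formulation of \eqref{prob:fixa}$_1$ holds in $L^2(0,T;H^{-2}(\Omega))$, and the convergence of $v_m$ in $C([0,T];\,\mathrm{weak})$ identifies $v(0)=v_0$ and $v'(0)=v_1$.

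For uniqueness, take two solutions $v,\hat v$, put $w=v-\hat v$, subtract the equations and run the energy estimate with $w'$, keeping the Kirchhoff term unintegrated. The difference $b_1\big(|\nabla v|_0^2\Delta v-|\nabla\hat v|_0^2\Delta\hat v\big)=b_1|\nabla v|_0^2\Delta w+b_1\big(|\nabla v|_0^2-|\nabla\hat v|_0^2\big)\Delta\hat v$ is bounded, using $\|\nabla\varphi\|\le C\|\Delta\varphi\|$, by $C(t)\big(\|\Delta w\|^2+\|w'\|^2\big)$ with $C(\cdot)\in L^1(0,T)$ --- here it is precisely the beam term $\Delta^2 v$ that makes $\Delta v\in L^\infty(0,T;L^2)$ available and supplies the $\|\Delta w\|^2$ control, which is exactly what the bare membrane Kirchhoff equation lacks --- and the remaining terms are handled the same way, so Gronwall with zero data forces $w\equiv 0$. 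The main obstacle is making this last step rigorous at the stated regularity: $w'$ lies only in $L^2(0,T;L^2)$ while $w''$ lives in $L^2(0,T;H^{-2})$, so the pairing $\langle w'',w'\rangle$ and the resulting energy identity are not literally legitimate and must be justified by a time-regularization (mollification in $t$ and passage to the limit) in the spirit of Lions, or by first upgrading the regularity that the fourth-order term provides. The a priori estimates and the compactness passage, by contrast, are routine once the bounds on $b_1,b_2$ and the $a^{(k)}_i$ coming from \eqref{hip:dominio2D_H1} are in hand.
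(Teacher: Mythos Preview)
The paper does not prove this theorem at all: it is announced in Section~\ref{sec:resultAnal} as a known result, with the sentence ``whose demonstrations can be found in \cite{viga1D} for the one-dimensional case and \cite{viga2D} for the two-dimensional case,'' and no argument is supplied. So there is nothing in the paper to compare your proposal against line by line.

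That said, your Faedo--Galerkin sketch is exactly the method those references employ, and the outline is sound: the local ODE step, the energy identity obtained by testing with $v_m'$, the absorption of the lower-order and sign-indefinite $a^{(1)}_i$-contributions into the dominant $b_2\|\Delta v_m\|^2$ term via $\|\nabla\varphi\|\le C\|\Delta\varphi\|$ on $H_0^2(\Omega)$, Gronwall, and then Aubin--Lions compactness to pass to the limit in the Kirchhoff nonlinearity. One small point you glossed over but that the later numerical analysis in this paper actually exploits: the transport-type term $\big(a^{(4)}_i\nabla_{y_i}v_m',v_m'\big)$ integrates by parts to $\tfrac{K'}{K}\,\|v_m'\|^2\ge 0$ under \eqref{hip:dominio2D_H1}, so it does not need to be estimated at all --- it can simply be dropped on the left. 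Your identification of the genuine technical obstacle, namely that $\langle w'',w'\rangle$ is not a legitimate pairing at the regularity asserted in the statement and that uniqueness therefore requires a Lions-type time-mollification (or an intermediate strong-solution step), is accurate and is precisely how the cited works handle it.
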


	\begin{myth}(Asymptotic Decay: Two-dimensional case)\label{teo:decaimento2D}
		
		Consider the hypothesis \ref{hip:dominio2D_H1}, then we have the energy \eqref{prob:movel}, given by
		\begin{align*}
			E(t) = \frac{1}{2}\int_{\Omega}\Big[\big|u'(x,t)\big|^2 + \big|\Delta u(x,t)\big|^2 + \zeta_0\big|\nabla u(t)\big|_0^2+ \frac{\zeta_1}{2}\big|\nabla u(x,t)\big|^4\Big]dx,
		\end{align*}
		satisfie
		\begin{align*}
			E(t) \leq A_0~e^{-A_1t},~\quad \forall t\geq 0,
		\end{align*}
		where $A_0$ and $A_1$ are positive constants.
	\end{myth}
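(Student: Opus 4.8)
The plan is to establish the exponential decay via the standard energy–perturbation method: construct a Lyapunov functional $\mathcal{L}(t)$ that is equivalent to $E(t)$ and satisfies a differential inequality $\mathcal{L}'(t) \le -c\,\mathcal{L}(t)$. First I would work in the fixed cylindrical domain, transferring the decay statement for $u$ on $\Omega_t$ to the transformed function $v$ on $\Omega=(0,1)^n$ via the diffeomorphism $\tau$, since the hypothesis \eqref{hip:dominio2D_H1} guarantees $K_0 \le K(t) \le K_1$ so that the norms on $\Omega_t$ and $\Omega$ are uniformly equivalent and the energies differ only by bounded time-dependent factors. Then I would differentiate $E(t)$ along solutions of \eqref{prob:fixa} (or directly of \eqref{prob:movel} using Reynolds' transport theorem for the moving domain). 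The key identity is that, after multiplying the equation by $u'$ and integrating by parts, the bracketed terms combine into $\frac{d}{dt}E(t)$ plus the dissipative contribution $-\nu\int_{\Omega_t}|u'|^2\,dx$ plus error terms coming from the motion of the boundary (the $a^{(k)}_i$ coefficients in \eqref{def:asEbs}), each of which carries a factor $K'/K$ and is therefore controlled, under \eqref{hip:dominio2D_H1}, by a small multiple of the total energy.

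Next I would introduce the perturbed functional
\begin{equation*}
\mathcal{L}(t) = E(t) + \varepsilon\,\psi(t), \qquad \psi(t) = \int_{\Omega_t} u'(x,t)\,u(x,t)\,dx,
\end{equation*}
for $\varepsilon>0$ small. Computing $\psi'(t)$ and again using the equation to replace $u''$, one gets a term $-\int|\Delta u|^2 - \zeta_0\int|\nabla u|^2 - \zeta_1|\nabla u|_0^4 + \int|u'|^2$ together with lower-order terms absorbable by Poincaré's inequality (valid since $u\in H^2_0(\Omega_t)$ with uniformly bounded domain) and by Young's inequality. Choosing $\varepsilon$ small enough that $\mathcal{L}$ stays equivalent to $E$ — i.e. $c_0 E(t) \le \mathcal{L}(t) \le c_1 E(t)$ — and that the bad terms are dominated, one arrives at $\mathcal{L}'(t) \le -A_1\,\mathcal{L}(t)$. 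Gronwall's lemma then yields $\mathcal{L}(t) \le \mathcal{L}(0)e^{-A_1 t}$, and the equivalence converts this back to $E(t) \le A_0 e^{-A_1 t}$ with $A_0 = (c_1/c_0)\,E(0)$ depending on the data through Theorem~\ref{teo:EeUfixa2D}.

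The main obstacle will be handling the non-cylindrical geometry: the time derivative of $E(t)$ acquires boundary integrals over $\partial\Omega_t$ and volume terms from the transport of the moving domain, and one must verify that all of these are of the form $(\text{bounded coefficient})\times(\text{energy density})$ with coefficients proportional to $K'/K$ and $K''/K$, so that the smallness of the dissipation relative to these terms is never an issue — equivalently, one needs $\nu$ large enough, or $K_2$ small enough, relative to the Poincaré constant of $\Omega$. A careful bookkeeping of the coefficients $a^{(1)}_i,\dots,a^{(4)}_i$ in \eqref{def:asEbs}, each of which is $O(|y|\,|K'|/K)$ or involves $K''$, shows they are uniformly bounded on $Q$ under \eqref{hip:dominio2D_H1}; the secondary difficulty is controlling the nonlinear term $\zeta_1|\nabla u|_0^4$, which is nonnegative and actually helps, but whose time-derivative must be tracked to close the estimate. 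Since the analogous one-dimensional result is proved in \cite{viga1D} and the two-dimensional existence theory in \cite{viga2D}, the argument here is essentially a matter of adapting that Lyapunov construction to $n=2$, the estimates being structurally identical.
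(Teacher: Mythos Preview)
The paper does not actually prove this theorem: it is stated in Section~\ref{sec:resultAnal} as a known result, with the demonstration explicitly deferred to the references \cite{viga1D} (one-dimensional case) and \cite{viga2D} (two-dimensional case). Your proposal sketches precisely the perturbed-energy (Lyapunov) argument that those references employ, so your approach is consistent with what the paper invokes; there is simply no in-paper proof to compare against beyond the citation.
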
 	
	
	From the important theoretical results, we can now develop numerical methods to determine an approximate numerical solution and verify the consistency of the numerical methods used.
	
	Due to the equivalence of the problems, the methods used will be for the problem \eqref{prob:fixa}   that is defined in a cylindrical domain.

\section{Numerical Method}\label{sec:metNum}

The purpose of this section is to develop a numerical method, based on the finite element
method and the finite difference method to make the numerical simulations in order to obtain
a  approximate solution of the problem \eqref{prob:fixa}.

Denoting the Hilbert space $H_0^2(\Omega)=V$, then weak formulation of the problem \eqref{prob:fixa} can be written in the follows form, when we integrate by parts the second, third, fifth and sixth terms:
	
		\begin{equation}\label{prob:fixaVar}
		\left\{
		\begin{aligned}
			&\big(v''(t),w\big) + b_1(t)\big|\nabla v(t)\big|_0^2 \big(\nabla v(t),\nabla w\big) + b_2(t)\big(\Delta v(t),\Delta w\big) + \nu~\!\big(v'(t),w\big) \\
			&\quad+  \big(a^{(1)}_i(t)\nabla_{y_i} v(t),\nabla_{y_i}w\big) + \big(a^{(2)}_{ij}(t)\nabla_{y_i,y_j}v(t),w\big) \\
			&\quad+ \big(a^{(4)}_i(t)\nabla_{y_i}v'(t),w\big) + \big(a^{(5)}_i(t)\nabla_{y_i}v(t),w\big)  = 0, \mbox{ in } Q\\
			&\big(v(y,0),w\big) = \big(v_0(y),w\big),~~ \big(v'(y,0),w\big) = \big(v_1(y),w\big), ~\mbox{in } ~\Omega,\\
			&\big(v,w\big) = 0~\mbox{and}~ \big(\nabla v,w\big) \equiv 0,~\mbox{in } \partial\Omega\times[0,T[,~\forall w\in V,
		\end{aligned}
		\right.
    \end{equation}
    where $\displaystyle a^{(5)}_i(t) = a^{(3)}_i(t) + 2\frac{K'}{K}a^{(4)}(t)$.		

	\subsection{Approximated Problem}	
	
Let $V_h=[\varphi_1, \varphi_1,\cdots, \varphi_m ]$  be a subspace generated by the $m$ first vectors of the basis of Hilbert space $V = H_0^2(\Omega)$. Then we want to determine an approximate solution $ v_h(t)\in V_h $ of the following approximate system, given by:
\begin{equation} \label{edp:fixaVarAprox}
                  \begin{aligned}
				&\big(v_h''(t),w_h\big) + b_1(t)\big|\nabla v_h(t)\big|_0^2 \big(\nabla v_h(t),\nabla w_h\big) + b_2(t)\big(\Delta v_h(t),\Delta w_h\big) + \nu~\!\big(v_h'(t),w_h\big) \\
				&~~+ \big(a^{(1)}_i(t)\nabla_{y_i} v_h(t),\nabla_{y_i}w_h\big) - \big(a^{(2)}_{ij}(t)\nabla_{y_i}v_h(t),\nabla_{y_j}w_h\big) 
				\\
				&~~+ \big(a^{(4)}_i(t)\nabla_{y_i}v_h'(t),w_h\big) + \big(a^{(5)}_i(t)\nabla_{y_i}v_h(t),w_h\big) = 0, ~\forall w_h\in V_h,
            \end{aligned}
        \end{equation}
        where we integrate by parts the sixth term.

	Since that, $v_h(t)\in V_h$, then the function can be represented by
	\begin{equation}\label{combLin_vh}
		v_h(y,t) = \sum_{i=1}^m d_i(t)\varphi_i(y), \quad\varphi_i(y)\in V_h.
	\end{equation}
%\end{document}	
	Taking $w_h = \varphi_l(y) \in V_h$, for some $l\in\{1,\cdots,m\}$, and substituting  \eqref{combLin_vh} in \eqref{edp:fixaVarAprox}, we obtain
	\begin{align}
		\nonumber
        &\sum_{k=1}^{m}\Big\{d''_{k}(t)\big(\varphi_{k}(y),\varphi_{l}(y)\big)
        + b_1(t)d_{k}(t)\Big|\sum_{k=1}^{m}d_{k}(t)\nabla\varphi_{k}(y)\Big|_0^2\big(\nabla\varphi_{k}(y),\nabla\varphi_{l}(y)\big)
        \\\label{metPreMat}	
        &\quad+ b_2(t)d_k(t)\big(\Delta\varphi_{k}(y),\Delta\varphi_{l}(y)\big)
        + d'_{k}(t)\Big[\nu~\!\big(\varphi_k(y),\varphi_l(y)\big)
        + \Big(a^{(4)}_i(t)\nabla_{y_i}\varphi_k(y),\varphi_l(y)\Big)\Big]
        \\\nonumber
        &\quad+ d_{k}(t)\Big[\big(a^{(1)}_i(t)\nabla_{y_i} \varphi_k,\nabla_{y_i}\varphi_l\big) - \big(a^{(2)}_{ij}(t)\nabla_{y_i}\varphi_k,\nabla_{y_j}\varphi_l\big) + \big(a^{(5)}_i(t)\nabla_{y_i}\varphi_k,\varphi_l\big)\Big]\Big\} = 0.
	\end{align}

In order to validate the method \eqref{metPreMat}, we add the term source $\big(f (y, t), w \big)$ on the right side of the equation \eqref{edp:fixaVarAprox} and, taking $w=\varphi_l $, we have the following definitions for the matrices and vectors:	
\begin{equation}\label{def:MatVet}
		\begin{aligned}
			&A_{kl} = \Big(\varphi_{k}(y),\varphi_{l}(y)\Big), \quad K_{1(kl)} = \Big(\nabla\varphi_{k}(y),\nabla\varphi_{l}(y)\Big), \quad K_{2(kl)} = \Big(\Delta \varphi_{k}(y),\Delta\varphi_{l}(y)\Big),
			\\
	     	&B_{1(ikl)}(t) = \Big(a_i^{(1)}(t)\nabla_{y_i}\varphi_{k}(y),\nabla_{y_i}\varphi_{l}(y)\Big), \quad B_{2(ijkl)}(t) = \Big(a_{ij}^{(2)}(t)\nabla_{y_i}\varphi_{k}(y),\nabla_{y_j}\varphi_l(y)\Big),\\
	     	&B_{3(ikl)}(t) = \Big(a_{i}^{(4)}(t)\nabla_{y_i}\varphi_{k}(y),\varphi_l(y)\Big), \quad B_{4(ikl)}(t) = \Big(a_{i}^{(5)}(t)\nabla_{y_i}\varphi_{k}(y),\varphi_l(y)\Big),\\
	     	&F_{l}(t) = \big(f(t),\varphi_l(y)\big), \quad d_{0(l)} = \big(v_0(y),\varphi_l(y)\big), \quad d_{1(l)} = \big(v_1(y),\varphi_l(y)\big).
		\end{aligned}
	\end{equation}
	Considering the definitions \eqref{def:MatVet} and varying $l=1,2, \cdots, m$, we obtain the following nonlinear system of second order ordinary differential equations:
		\begin{equation}\label{sistemaEDO}
		\left\{
		\begin{aligned}
	    	& A d''(t)
	    		+ G\big(t,d\big)~\!K_{1}~\!d(t)
		    	+ L_1(t)~\!d'(t)
	    	    + L_2(t)~\!d(t) = F(t),\\
	    	&d(0) = d_0 \quad\mbox{and}\quad d'(0) = d_1,
		\end{aligned}
		\right.
	\end{equation}
	with
	\begin{equation}\label{def:G_Ls}	
		\begin{aligned}
			&G\big(t,d\big) = b_1(t)\Big|\sum_{k=1}^m d_k(t)\varphi_k\Big|_0^2,\quad L_1(t) = \nu A +  \widehat{B}_{3(i)}(t),\\
			&L_2(t) = b_2(t)K_{2} +  \Big(\widehat{B}_{1(i)}(t) + \widehat{B}_{4(i)}(t) - \widehat{B}_{2(ij)}(t)\Big),
		\end{aligned}
	\end{equation}
	where we denote by  $\widehat{B}$  the transpose of the matrix $ B $.
	
Now, to solve the system of ordinary nonlinear equations \eqref{sistemaEDO} in discrete time, we will use the finite difference method.

	\subsection{The Newmark's method}\label{sistemaNewmark}
	
	In order, consider the discretize the fixed  time interval $[0,T]$,  using a uniform time step of size $ \Delta t = T/N$, then
 $ t_\eta = \eta \Delta t$, ~for $\eta=0,1,\cdots, N$ and  $\chi^\eta=\chi(\cdot,t_\eta)$. Consider the Neumark's approximations:
$$
\chi^{\eta+\theta} = \theta \chi^{\eta-1} + (1-2\theta)\chi^{\eta}+\theta \chi^{\eta+1}.
$$

Substituting  the approximations in the system \eqref{sistemaEDO} and taking the equation in the discrete time $t=t_\eta = \eta \Delta t$ , we obtain
\begin{equation}\label{sistema_n>2}
		\begin{aligned}
			&\Big(M_1^{\eta+1} + \theta~\!(\Delta t)^2 G^{\eta+1}(d)K_1\Big) d^{\eta+1}+ M_2^{\eta}d^\eta \\
			&\qquad+ \Big(M_3^{\eta-1} + \theta~\!(\Delta t)^2 G^{\eta-1}(d)K_1\Big) d^{\eta-1}
= (\Delta t)^2 F^{\eta+\theta},
		\end{aligned}
\end{equation}
or equivalently
\begin{equation}\label{sistema_n>0}
		\begin{aligned}
			&\Big(M_1^{\eta+1} + \theta~\!(\Delta t)^2 G^{\eta+1}(d)K_1\Big) d^{\eta+1}=- M_2^{\eta}d^\eta - \Big(M_3^{\eta-1} + \theta~\!(\Delta t)^2 G^{\eta-1}(d)K_1\Big) d^{\eta-1}
+ (\Delta t)^2 F^{\eta+\theta},
\end{aligned}
\end{equation}
	where
	\begin{align*}
		M_1^{\eta+1} &= A + \frac{\Delta t}{2} L_1^{\eta+1} + \theta(\Delta t)^2~\!L_2^{\eta+1},\quad M_2^{\eta} = (\Delta t)^2(1-2\theta)\big(G^\eta(d)K_1 + L_2^\eta\big) - 2A,\\
		M_3^{\eta-1} &= A - \frac{\Delta t}{2} L_1^{\eta-1} + \theta(\Delta t)^2~\!L_2^{\eta-1},\quad
		F^{\eta+\theta}=\theta F^{\eta-1}+(1-2\theta) F^{\eta}+\theta F^{\eta+1}.
	\end{align*}

\paragraph{Remark.} For the initial step, that is, for $\eta=0$, appears in the equation the term $d^{-1}$, that does not exist.
Proceeding formally, we use a quadratic order approximation for the initial data $d'(0)$. Therefore, we consider $d^{-1}= d^1 - 2\Delta t d'(0)$, so we have $d^{\eta+\theta}=2\theta d^{\eta+1} + (1-2\theta)d^\eta -2\Delta t \theta d'(0)$. So, for $\eta =0$ in \eqref{sistema_n>0}, we have
\begin{equation}\label{sistema_n=0}
		\Big[M_1^1 + M_3^0 + \theta~\!(\Delta t)^2 \Big(G^{1}(d)+G^{-1}(d)\Big)K_1\Big] d^{1}=
		- M_2^{0}d^0 + 2\Delta t~\!M_3^{0}d_1+ (\Delta t)^2 F^{0+\theta},
\end{equation}
	where we assume the approximation $M_3^{-1}=M_3^{0},\quad F^{0+\theta}= \theta F^1 + (1-\theta)F^{0}~$ and
	$$
	G^{-1}(X) = b_1^{0}\Big|\sum_{k=1}^{m}\big(d_{k}^1-2\Delta t~ d_{1(k)}\big)\nabla\varphi_{k}(y)\Big|_0^2,
	$$
	considering the approximation $b_1^{-1}\approx b_1^{0}$.

To determine the solution vector $d^\eta=(d_1^\eta,d_2^\eta, \cdots, d_m^\eta)$, we must now solve the algebraic system \eqref{sistema_n=0} and \eqref{sistema_n>0}, for $\eta=0,1,\cdots, N$. As the system is not linear, we will employ the Newton Method, which we briefly describe below, aiming at an optimization of the computational cost in the calculations.

	\subsection{Newton's method}
	
For each  $\eta=1,2,\cdots, N$, consider $\mathcal{F}(X)$ the left side of the equation in \eqref{sistema_n=0}.
In this way, solve \eqref{sistema_n=0} is equivalent to finding the root of $\mathcal{F}(X)$.	By Newton's method, given the initial approximation and denoting $X=d^{n+1}$,, we obtain $X_{k+1}$ a new approximate solution of  $\mathcal{F}(X)=0$, given by $X_{k+1}=X_k+s_k$, where $s_k$ is the linear system solution  $J\mathcal{F}(X_k)s_k=-\mathcal{F}(X_k)$.
The key point of the method is the calculation of the Jacobian matrix $J\mathcal{F}(X)$. This is because, if it has a high computational cost, it can make the method unfeasible.

Define

\begin{equation}\label{newton_n>0}
			\mathcal{F}^{\eta}(X) = \Big(M_1^{\eta+1} + \theta(\Delta t)^2 G^{\eta+1}(X)K_1\Big)X + \Gamma^\eta,
\end{equation}
		where $~\Gamma^\eta = M_2^{\eta}d^\eta + \Big(M_3^{\eta-1} + \theta~\!(\Delta t)^2 G^{\eta-1}(d)K_1\Big) d^{\eta-1}
- (\Delta t)^2 F^{\eta+\theta}$.

For $\eta = 0$, follows
\begin{equation}\label{newton_n=0}
			\mathcal{F}^{0}(X) = \Big[M_1^{1} + M_3^{0} + \theta(\Delta t)^2 \big(G^{1}(X)+G^{-1}(X)\big)K_1\Big]X + 2\theta(\Delta t)^3 G^{-1}(X)K_1 d_1 + \Gamma^0,
\end{equation}
		where $~\Gamma^0 = M_2^{0}d^0 - 2\Delta t~\!M_3^{0}d_1
- (\Delta t)^2 F^{0+\theta}$.

For 	$\eta > 0$:
\begin{equation}\label{jacobiana_n>0}
		\begin{aligned}
			J{\mathcal{F}}^\eta(X) = \frac{d~\!\mathcal{F}^\eta\!\!}{dX}(X) &= M_1^{\eta+1} + \theta(\Delta t)^2 \frac{d}{dX}\Big[G^{\eta+1}(X)K_1 X\Big]\\
			&= M_1^{\eta+1} + \theta(\Delta t)^2\Big(K_1 X~\!\frac{d G^{\eta+1}}{d X}(X) + G^{\eta+1}(X)K_1\Big),
		\end{aligned}
\end{equation}
	and for $\eta = 0$:
\begin{equation}\label{jacobiana_n=0}
		\begin{aligned}
			J{\mathcal{F}}^0(X) &= M_1^{1} + M_3^{0} +  \theta(\Delta t)^2 \Big[K_1 X~\!\Big(\frac{d G^{1}}{d X}(X)+\frac{d G^{-1}}{d X}(X)\Big) + \Big(G^{1}(X)+G^{-1}(X)\Big)K_1\Big]\\
			&\quad + 2\theta(\Delta t)^3~\!K_1 d_1~\!\frac{d G^{-1}}{d X}(X),
		\end{aligned}
\end{equation}
	where
	\begin{align*}	
		\frac{d G^{\eta}}{d X}(X) = \Big[\frac{\partial G^{\eta}}{\partial X_1}(X)~~ \cdots~~ \frac{\partial G^{\eta}}{\partial X_m}(X)\Big]_{m\times 1},\quad\forall \eta\in\{-1\}\cup\{1,\cdots,N-1\}.
	\end{align*}
	Using \eqref{def:G_Ls}, we obtain
	\begin{align*}
		&\frac{\partial G^{\eta}}{\partial X_k}(X)  = 2~\!b_1^{\eta}~X_k~\Big[\int_{\Omega}\big(\nabla_{y_i}\varphi_k(y)\big)^2~dx\Big],\\
		&\frac{\partial G^{-1}}{\partial X_k}(X)    = 2~\!b_1^{0}~\big(X_k - 2\Delta t~\!d_{1k}\big)~\Big[\int_{\Omega}\big(\nabla_{y_i}\varphi_k(y)\big)^2~dx\Big],
	\end{align*}
	where $d_{1k}$ is the $k$-th coordinate of the initial velocity vector $d_1$.
	
	From there, we apply Newton's refinement algorithm, for $\eta= 1,2, \ldots, N-1 $, until one of the stopping criteria is satisfied: $\Vert X_{k+1} - X_k \Vert_\infty <10^{-14}$ or $ \vert \mathcal{F}(X_k)\vert<10^{-14}$, where $X_k $ is the solution obtained in the $i$-th iteration of Newton. We also define a maximum limit for iterations in order to avoid infinite refinements.

\subsection{Finite Element Method- Hermite Base}
	
Let $[T_h]$ be a polygonalization family of $\Omega$, where $T_h =\{K; ~ K \subset\Omega\}$, satisfying
the minimum angle condition (see \cite{ciarlet}) and indexed by the parameter $h$, representing the
maximum diameter of the elements $K\in T_h$. Given an integer $l\geq 1$, we introduce the following finite element space	 $V_h$
spanned by the Hermite interpolation polynomials basis functions. More specifically, we let
\begin{align*}
		V_h = \Big\{\psi_h\in V_h^3 \cap C^1(\Omega);~\psi_h\big|_K \in P_l(K),~\forall~K\in T_h\Big\}\subset V,
	\end{align*}
where  $V^3_{h}$ represents a finite-dimensional function space generated by piecewise polynomial $P_l(K)$ functions of degree less or equal $3$  associated with a discretization
by finite elements  of size $h$. In particular, in this paper we will use the Hermite polynomials of degree equal $l=3$.
	
	From the Lemma of Douglas-Dupont (see in \cite{ciarlet}), it follows that, given a function
	$w:]0,T[\longrightarrow H^{l+1}(\Omega)$, there is an interpolator $\widehat{w}_h: ]0,T[\longrightarrow V_h^l$ such that
\begin{align*}
	\Vert w(t) - \widehat{w}_h(t)\Vert_m \leq C_1~\!h^{l+1-m}\Vert w(t)\Vert_{l+1},
\end{align*}
	where $\Vert\cdot\Vert_m$ denotes the seminorm over Hilbert space $H^m(\Omega)$ and $0\leq m\leq l$.

Knowing the Hermite polynomials, then all the matrices of the system \eqref{sistema_n>0} can be calculated explicitly and solving the nonlinear system  by Newton's method gives the approximate solution, given by
$$
v_h(y,t) = \sum_{i=1}^m d_i(t)\varphi_i(y), \quad\varphi_i(y)\in V_h.
$$

We note that since the Hermite polynomials are orthogonal then $v^n_h(y_i)=d_i^n$

\section{Numerical analysis}\label{sec:analNum}
In this section we present error estimates for the finite element semi-discrete and fully discrete approximations of the nonlinear beam equation \eqref{prob:fixaVar} or  equivalently for the weak formulation of the  nonlinear beam equation with moving ends.

\textbf{Remarks on regularity:} In order to get an error estimate,  we  require more  regularity for the function $v=v(y,t)$ and its derivatives in time:
\begin{equation}\label{hip:regularidades_v's_H2}
			v', v''  \in L^{2}(0,T;H^4(\Omega)),~~
			v'''     \in L^{\infty}(0,T;H^1(\Omega))~~\mbox{and}~~
			v^{(iv)} \in L^{\infty}(0,T;L^2(\Omega)).
			\tag{H2}
\end{equation}

	\subsection{Error estimates for the semi-discrete problem}
	
The semi-discrete problem consists of finding $v_h:~[0,T[ \longrightarrow V_h$ such that satisfies the problem \eqref{prob:fixaVar} restricted to $w=w_h \in V_h$, that is, to find $v_h(t)$ for all $t\in~\!] 0, T [$ such that	
		\begin{equation}\label{prob:fixaVarAprox}
		\left\{
		\begin{aligned}
			&\begin{aligned}
				&\big(v_h''(t),w_h\big) + b_1(t)\big|\nabla v_h(t)\big|_0^2 \big(\nabla v_h(t),\nabla w_h\big) + b_2(t)\big(\Delta v_h(t),\Delta w_h\big) + \nu~\!\big(v_h'(t),w_h\big) \\
				&\quad+ \big(a^{(1)}_i(t)\nabla_{y_i} v_h(t),\nabla_{y_i}w_h\big) + \big(a^{(2)}_{ij}(t)\nabla_{y_i,y_j}v_h(t),w_h\big) \\
				&\quad+ \big(a^{(4)}_i(t)\nabla_{y_i}v_h'(t),w_h\big) + \big(a^{(5)}_i(t)\nabla_{y_i}v_h(t),w_h\big) = 0, \mbox{ in } Q,
			\end{aligned}\\		
			&~\big(v_h(y,0),w_h\big) = \big(v_{0h}(y),w_h\big),~~ \big(v_h'(y,0),w_h\big) = \big(v_{1h}(y),w_h\big) \quad\mbox{in } ~\Omega,\\
			&~\big(v_h,w_h\big) = 0~\mbox{and}~ \big(\nabla v_h,w_h\big) \equiv 0,~\mbox{on}~ \partial\Omega\times[0,T[.
		\end{aligned}
		\right.
    \end{equation}
For the estimates, we will need the following hypothesis about the projections of $V$ in $V_h$, denoted by $R_h$ and defined in \eqref{def:projecaoOrtogonal}, of the initial data
 \begin{equation}\label{hip:condInicialContinuo_H3}
		\vert v_{1h} - R_h v_1\vert_0~\leq~c_1h^4,\quad\vert\nabla v_{0h} - R_h \nabla v_0\vert_0~\leq~c_2h^3\quad\mbox{and}\quad \vert\Delta v_{0h} - R_h \Delta v_0\vert_0~\leq~c_3h^2,
		\tag{H3}
    \end{equation}
where  $R_h\Delta v_0,~R_h \nabla v_0$ and $R_h v_1$ are the projections of $\Delta v_{0},~\nabla v_{0}$ and  $v_{1}$ in $V_h$, respectively, and $c_1,c_2,c_3\geq 0$ are constants independent of  $h$.

Let us assume that,
\begin{align*}\label{hip:zeta0_H4}\tag{H4}
    		 \max_{t\in[0,T]}(K'(t))^2 < \frac{\zeta_0}{4} 
		\end{align*}
which requires that the speed of the end points be smaller than the characteristic speed of the equation.		
		
   Let us denote by $e(t) = v(t) - v_h(t), \forall t\in ]0,T[$, the error between the exact and approximate solution. Then, we have the following error estimate:

    \begin{myth}(Semi-discrete problem - Estimate)
    	\label{teo:tempoContinuo}
    	
    	Let $v=v(y,t)$ the solution of the problem \eqref{prob:fixaVar}. Under hypotheses
    	\ref{hip:dominio2D_H1}-- \eqref{hip:zeta0_H4} and given the initial data
    	 $v_0,v_1\in H_0^2(\Omega)\cap H^4(\Omega)$, there exists a constant $C>0$, such that
    	\begin{equation}\label{estimativaContinuo}
    		\Vert e'\Vert_{L^\infty(0,T;L^2(\Omega))} + \Vert e\Vert_{L^\infty(0,T;H^2_0(\Omega))} ~\leq~ C h^2.
    	\end{equation}
    \end{myth}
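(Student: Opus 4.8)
The plan is the classical energy argument for semidiscrete finite element schemes: split the error through the projection into $V_h$, estimate the projection part by approximation theory, derive an evolution equation for the remaining discrete part, run an energy estimate on it, and close by Gronwall's lemma. Introduce the projection $R_h:V\to V_h$ of \eqref{def:projecaoOrtogonal} and write
$$
e(t)=\big(v(t)-R_hv(t)\big)+\big(R_hv(t)-v_h(t)\big)=:\rho(t)+\sigma(t),\qquad\sigma(t)\in V_h.
$$
For $\rho$ nothing beyond approximation theory is needed: since $V_h$ uses Hermite polynomials of degree $l=3$, the Douglas--Dupont estimate together with the regularity \eqref{hip:regularidades_v's_H2} (and $v_0,v_1\in H^4(\Omega)$, whence $v,v'\in L^\infty(0,T;H^4)$ by $H^1(0,T;H^4)\hookrightarrow C([0,T];H^4)$) gives, for $m=0,1,2$, $\Vert\rho(t)\Vert_m,\ \Vert\rho'(t)\Vert_m\leq Ch^{4-m}$ and $\Vert\rho''\Vert_{L^2(0,T;H^m)}\leq Ch^{4-m}\Vert v''\Vert_{L^2(0,T;H^4)}$; in particular $\Vert\rho'\Vert_{L^\infty(0,T;L^2)}\leq Ch^4$ and $\Vert\rho\Vert_{L^\infty(0,T;H^2_0)}\leq Ch^2$. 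By the triangle inequality it then suffices to prove $\Vert\sigma'\Vert_{L^\infty(0,T;L^2)}^2+\Vert\sigma\Vert_{L^\infty(0,T;H^2_0)}^2\leq Ch^4$.

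Subtracting \eqref{prob:fixaVarAprox} from \eqref{prob:fixaVar} tested against $w_h\in V_h$ and inserting $e=\rho+\sigma$ yields an evolution equation for $\sigma$,
$$
\big(\sigma'',w_h\big)+b_2(t)\big(\Delta\sigma,\Delta w_h\big)+\big(a^{(1)}_i(t)\nabla_{y_i}\sigma,\nabla_{y_i}w_h\big)+\nu\big(\sigma',w_h\big)=\mathcal R_h(t;w_h),
$$
where $\mathcal R_h(t;\cdot)$ is the sum of the projection residuals (the terms carrying $\rho'',\ \Delta\rho,\ \nabla\rho,\ \rho'$), the difference of the Kirchhoff terms $b_1|\nabla v|_0^2(\nabla v,\nabla w_h)-b_1|\nabla v_h|_0^2(\nabla v_h,\nabla w_h)$, and the lower-order terms carrying $a^{(2)}_{ij},a^{(4)}_i,a^{(5)}_i$ acting on $e$. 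Taking $w_h=\sigma'(t)\in V_h$ and integrating over $(0,t)$ produces the energy identity
$$
\tfrac12\Big[|\sigma'(t)|_0^2+b_2(t)|\Delta\sigma(t)|_0^2+\big(a^{(1)}_i(t)\nabla_{y_i}\sigma(t),\nabla_{y_i}\sigma(t)\big)\Big]+\nu\!\int_0^t\!|\sigma'|_0^2\,ds=\mathcal I_0+\!\int_0^t\!\Big(\mathcal R_h(s;\sigma')+\mathcal C(s)\Big)ds,
$$
where $\mathcal I_0$ collects the values at $t=0$ and $\mathcal C(s)$ the terms produced by differentiating the coefficients $b_2,a^{(1)}_i$ past $\tfrac{d}{dt}$. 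Here \eqref{hip:zeta0_H4} is used in an essential way: it forces $a^{(1)}_i(y,t)=K^{-2}\big(\zeta_0-4(y_iK')^2\big)\geq K_1^{-2}\big(\zeta_0-4\max_{[0,T]}|K'|^2\big)>0$ on $\overline\Omega\times[0,T]$, so the bracket on the left is equivalent to $|\sigma'(t)|_0^2+\Vert\sigma(t)\Vert_2^2$ (using $b_2\geq K_1^{-4}$ and the equivalence of $|\Delta\cdot|_0$ with $\Vert\cdot\Vert_2$ on $H^2_0(\Omega)$); since $K\in C^2$ with $K\geq K_0>0$, one also has $|\mathcal C(s)|\leq C\big(|\sigma'(s)|_0^2+\Vert\sigma(s)\Vert_2^2\big)$ by \eqref{hip:dominio2D_H1}.

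The technical heart is the bound $\int_0^t\mathcal R_h(s;\sigma')\,ds\leq\varepsilon\big(|\sigma'(t)|_0^2+\Vert\sigma(t)\Vert_2^2\big)+C_\varepsilon\int_0^t\big(|\sigma'|_0^2+\Vert\sigma\Vert_2^2\big)ds+Ch^4$. Two preliminary facts are needed: an $h$-uniform a priori bound $\sup_{[0,T]}\Vert v_h(t)\Vert_2\leq M$, obtained by the usual discrete energy estimate (test \eqref{prob:fixaVarAprox} with $w_h=v_h'$ and apply Gronwall); and the boundedness of $b_1|\nabla v|_0^2$ together with its $t$-derivative, which follows from $v,v'\in L^\infty(0,T;H^4)$ noted above. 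Granting these, the Kirchhoff difference is split as $b_1\big(|\nabla v|_0^2-|\nabla v_h|_0^2\big)(\nabla v_h,\nabla\sigma')+b_1|\nabla v|_0^2(\nabla\rho,\nabla\sigma')+b_1|\nabla v|_0^2(\nabla\sigma,\nabla\sigma')$, with $\big||\nabla v|_0^2-|\nabla v_h|_0^2\big|\leq(|\nabla v|_0+|\nabla v_h|_0)\,|\nabla e|_0\leq C(|\nabla\rho|_0+|\nabla\sigma|_0)$; the last piece equals $\tfrac{d}{dt}\big[\tfrac12 b_1|\nabla v|_0^2|\nabla\sigma|_0^2\big]-\tfrac12\tfrac{d}{dt}\big(b_1|\nabla v|_0^2\big)|\nabla\sigma|_0^2$, a harmless extra energy term plus a term $\leq C\Vert\sigma\Vert_2^2$. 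The pieces that pair a residual with $\nabla\sigma'$ (or, from the projection term $b_2(\Delta\rho,\Delta\sigma')$, with $\Delta\sigma'$) — which the energy does not control — are each treated by integration by parts \emph{in space}, moving the derivative onto the smoother factor, e.g. $(\nabla v_h,\nabla\sigma')=-(\Delta v_h,\sigma')$ and $(a^{(1)}_i\nabla_{y_i}\rho,\nabla_{y_i}\sigma')=-(\mathrm{div}(a^{(1)}\nabla\rho),\sigma')$, the smoother factor then being bounded in $L^2$ by $M$ or by $Ch^2$; for $b_2(\Delta\rho,\Delta\sigma')$ one integrates by parts \emph{in time} after the $\int_0^t$, replacing it by $b_2(\Delta\rho,\Delta\sigma)\big|_0^t$ plus an integral with a time derivative on $\rho$ (and this term vanishes outright if $R_h$ is the Ritz projection for $(\Delta\cdot,\Delta\cdot)$). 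The terms carrying $a^{(2)}_{ij},a^{(4)}_i,a^{(5)}_i$ are handled similarly — the $a^{(2)}$ and $a^{(5)}$ contributions are already bounded by $C(\Vert\rho\Vert_2+\Vert\sigma\Vert_2)|\sigma'|_0$ since $\rho,\sigma\in H^2_0$, and the only one needing an integration by parts is $(a^{(4)}_i\nabla_{y_i}\sigma',\sigma')=-\tfrac12\big(\mathrm{div}\,a^{(4)},|\sigma'|^2\big)\leq C|\sigma'|_0^2$. Young's inequality then gives the claimed bound.

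Finally, $\mathcal I_0$ is a combination of $|\sigma'(0)|_0^2$, $|\Delta\sigma(0)|_0^2$, $|\nabla\sigma(0)|_0^2$, all $\leq Ch^4$ by \eqref{hip:condInicialContinuo_H3}. Absorbing the $\varepsilon$-terms into the left-hand side of the energy identity and applying Gronwall's lemma yields $|\sigma'(t)|_0^2+\Vert\sigma(t)\Vert_2^2\leq Ch^4e^{CT}$ for all $t\in[0,T]$, hence $\Vert\sigma'\Vert_{L^\infty(0,T;L^2)}+\Vert\sigma\Vert_{L^\infty(0,T;H^2_0)}\leq Ch^2$; together with the bounds on $\rho$ and the triangle inequality this is \eqref{estimativaContinuo}. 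The principal obstacle is the third step: extracting from the Kirchhoff difference and the non-self-adjoint lower-order terms the structure ``energy of $\sigma$ plus $O(h^4)$'' via the right mixture of spatial and temporal integration by parts, while keeping control of the numerous time-dependent coefficients; securing the $h$-uniform $H^2$ bound on $v_h$ needed to linearize the Kirchhoff nonlinearity is the secondary one.
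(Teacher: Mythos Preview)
Your strategy is correct and is essentially the paper's own argument: the same splitting $e=\rho+\xi$ through the Ritz projection \eqref{def:projecaoOrtogonal}, testing against $\xi'$, the same energy functional built from $|\xi'|_0^2$, $b_2|\Delta\xi|_0^2$ and $(a^{(1)}_i\nabla_{y_i}\xi,\nabla_{y_i}\xi)$ (with \eqref{hip:zeta0_H4} forcing positivity of the last piece), the same Douglas--Dupont bounds on $\rho$, and closure by Gronwall.

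The one place where you take a slightly longer road than the paper is the handling of the $\rho$-residuals paired with $\nabla\xi'$ or $\Delta\xi'$. The projection in \eqref{def:projecaoOrtogonal} is defined with respect to the \emph{full} bilinear form $a^{(\vartheta,t)}(\cdot,\cdot)=(a^{(1)}_i\nabla_{y_i}\cdot,\nabla_{y_i}\cdot)+b_1|\nabla\vartheta|_0^2(\nabla\cdot,\nabla\cdot)+b_2(\Delta\cdot,\Delta\cdot)$, so the single orthogonality relation $a^{(v,t)}(\rho,\xi')=0$ kills at once the three terms $b_2(\Delta\rho,\Delta\xi')$, $(a^{(1)}_i\nabla_{y_i}\rho,\nabla_{y_i}\xi')$ and $b_1|\nabla v|_0^2(\nabla\rho,\nabla\xi')$; your integration-by-parts workarounds for these are valid but unnecessary. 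After this cancellation the only nonlinear leftover is the paper's $\mu(t)=b_1\big(|\nabla v_h|_0^2-|\nabla v|_0^2\big)(R_h\nabla v,\nabla\xi')$, which the paper bounds exactly as you propose---factor the difference of squares by $|\nabla e|_0(|\nabla v|_0+|\nabla v_h|_0)$ and move the gradient off $\xi'$ by spatial integration by parts. Your additional ingredient of an $h$-uniform $H^2$ bound on $v_h$ is implicitly used in the paper as well (it needs $|\nabla v_h|_0$ bounded), and your route to it is the standard one.
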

 where the  constant $C$ is dependent on $v$ and independent of $h$. 
    \begin{proof}

%To demonstrate \eqref{teo:tempoContinuo}, it is sufficient to prove that,
%$$
% \vert e'(t)\vert_0 + \vert \Delta e(t)\vert_0\leq Ch^2,
%$$
%for some constant $ C> 0 $, independent of $ h $, and for some $t\in ]0,T[$ fixed.

  We note that, as $V_h\subset V=H_0^2(\Omega)$, taking in particular $w=w_h$ in \eqref{prob:fixaVar}$_1$ and doing the difference with \eqref{prob:fixaVarAprox}$_1$, denoting $e = e(t)$, $v = v(t)$ and
  $v_h=v_h(t) $ , we obtain:
\begin{equation}\label{estCont:dif_v_vh}
			\begin{aligned}
				&\big(e'',w_h\big) - b_1(t)\big(\big[\big|\nabla v\big|_0^2\nabla v- \big|\nabla v_h\big|_0^2\nabla v_h\big],\nabla w_h\big) + b_2(t)\big(\Delta{e},\Delta w_h\big) \\
				&\quad+ \nu~\!\big(e',w_h\big) + \big(a^{(1)}_i(t)\nabla_{y_i} e~,\nabla_{y_i}w_h\big) + \big(a^{(2)}_{ij}(t)\nabla_{y_i,y_j}e~,w_h\big) \\
				&\quad+ \big(a^{(4)}_i(t)\nabla_{y_i}e',w_h\big) + \big(a^{(5)}_i(t)\nabla_{y_i}e~,w_h\big) = 0,~ \forall~w_h\in V_h.
			\end{aligned}
\end{equation}	

We use the  standard decomposition of the error
\begin{equation}\label{estCont:decompErro}
e(t) = v(t) - v_{h}(t) = (v(t) - R_hv(t)) + (R_hv(t) - v_{h}(t))=\rho(t)+\xi(t)= \rho+\xi
\end{equation}
where $\rho =  v(t) - R_h v(t)$ and  $\xi= R_h v(t) - v_h(t)$,  with $R_h v(t)$  being the Ritz projection of $v(t)\in V_h$, to be defined as follows:

To define the Ritz projection $R_h$ we will follow the analogue of the ideas of the elliptic approximation theory. Consider the following definitions.	
\begin{mydef}(Bilinear form)
	  		
	        Let the functional $a^{(\vartheta,t)}:V\times V\rightarrow\mathbb{R}$, for all $\vartheta \in \{v,v_h\}$ and $t\in [0,T[$ both fixed,  such that
		\begin{equation}\label{def:formaBilinear}
	    		a^{(\vartheta,t)}(v,w) = (a^{(1)}_i(t)\nabla_{y_i} v~,\nabla_{y_i}w) + b_1(t)\vert\nabla \vartheta(t)\vert_0^2(\nabla v,\nabla w) + b_2(t)(\Delta{v},\Delta{w}).
	   		 \end{equation}			
		\end{mydef}
	
	    \begin{mydef}(Ritz Projection on $a^{(\vartheta,t)}(\cdot,\cdot)$)	
We define the Ritz projection on the functional $a^{(\vartheta,t)}(\cdot,\cdot)$, defined in \eqref{def:formaBilinear}, as follows:		
  	\begin{align*}
        		R_h:~ V &\longrightarrow V_h \\
          	          v &\longmapsto     R_h v
        	\end{align*}
	       satisfying the orthogonality condition
    	    \begin{equation}
        	    \label{def:projecaoOrtogonal}
	            a^{(\vartheta,t)}\big(v(t)-R_h v(t),w_h\big) = 0,~ \forall w_h\in V_h.
    	    \end{equation}
    	\end{mydef}
   For the Ritz projection to satisfy the equation \eqref{def:projecaoOrtogonal}, it is necessary that the functional \eqref{def:formaBilinear} satisfies the following Lemma:
      	\begin{mylem}(Functional Bilinear, Continuous and Coercive)
    	
			The functional $a^{(\vartheta,t)}(\cdot,\cdot)$, defined by \eqref{def:formaBilinear}, is bilinear, continuous and coercive, for all $t\in[0,T[$ and $\vartheta\in\{v,v_h\}$.
    	\end{mylem}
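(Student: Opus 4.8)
The plan is to check the three properties in turn — for arbitrary but fixed $t\in[0,T[$ and $\vartheta\in\{v,v_h\}$ — the only structural inputs being the bounds on $K$ in \eqref{hip:dominio2D_H1} and the smallness condition \eqref{hip:zeta0_H4}. \emph{Bilinearity} is immediate: with $t$ and $\vartheta$ fixed, the factors $a^{(1)}_i(t)$, $b_1(t)\vert\nabla\vartheta(t)\vert_0^2$ and $b_2(t)$ are fixed (the last two are scalars, finite because $\vartheta(t)\in H_0^2(\Omega)$), and $v\mapsto\nabla_{y_i}v$, $v\mapsto\nabla v$, $v\mapsto\Delta v$ are linear, so each of the three terms defining $a^{(\vartheta,t)}$ in \eqref{def:formaBilinear} is bilinear in $(v,w)$.

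For \emph{continuity} I would estimate each term by the Cauchy--Schwarz inequality. Since $\Omega=(0,1)^n$ one has $\vert y_i\vert\le 1$, and \eqref{hip:dominio2D_H1} gives $0<K_0\le K(t)\le K_1$ and $0<K'(t)\le K_2$; hence $a^{(1)}_i(t)\in L^\infty(\Omega)$ with $\Vert a^{(1)}_i(t)\Vert_{L^\infty(\Omega)}\le K_0^{-2}\bigl(\vert\zeta_0\vert+4K_2^2\bigr)$, while $0<b_1(t)\le\zeta_1K_0^{-4}$ and $0<b_2(t)\le K_0^{-4}$. Combining these with the elementary bounds $\vert\nabla v\vert_0\le C\Vert v\Vert_{H^2(\Omega)}$ and $\vert\Delta v\vert_0\le C\Vert v\Vert_{H^2(\Omega)}$ yields $\vert a^{(\vartheta,t)}(v,w)\vert\le C\,\Vert v\Vert_{H^2_0(\Omega)}\Vert w\Vert_{H^2_0(\Omega)}$, with a constant that can in fact be chosen independent of $t$ (and of $\vartheta$, provided $\vartheta(t)$ stays bounded in $H_0^2(\Omega)$ uniformly in $t$).

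The real content is in \emph{coercivity}. Setting $w=v$ in \eqref{def:formaBilinear} gives
\[
a^{(\vartheta,t)}(v,v)=\sum_i\int_\Omega a^{(1)}_i(t)\,\bigl\vert\nabla_{y_i}v\bigr\vert^2\,dy+b_1(t)\,\vert\nabla\vartheta(t)\vert_0^2\,\vert\nabla v\vert_0^2+b_2(t)\,\vert\Delta v\vert_0^2 .
\]
The middle term is nonnegative because $b_1(t)=\zeta_1/K(t)^4>0$, so it may be dropped. For the first term, $\vert y_i\vert\le 1$ together with \eqref{hip:zeta0_H4} gives
\[
a^{(1)}_i(t)=\frac{1}{K(t)^2}\Bigl(\zeta_0-4y_i^2K'(t)^2\Bigr)\ \ge\ \frac{1}{K_1^2}\Bigl(\zeta_0-4\max_{s\in[0,T]}K'(s)^2\Bigr)=:c_0>0,
\]
so that term is $\ge c_0\vert\nabla v\vert_0^2\ge 0$; and $b_2(t)\ge K_1^{-4}>0$. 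Hence $a^{(\vartheta,t)}(v,v)\ge K_1^{-4}\vert\Delta v\vert_0^2$. Finally, on $H_0^2(\Omega)$ the quantity $\vert\Delta\cdot\vert_0$ is a norm equivalent to $\Vert\cdot\Vert_{H^2(\Omega)}$ — integration by parts twice gives $\vert\Delta v\vert_0^2=\sum_{j,k}\vert\partial_j\partial_k v\vert_0^2$ for $v\in H_0^2(\Omega)$, and the Poincaré inequality controls the lower-order terms — so there is $\alpha>0$, independent of $t$ and $\vartheta$, with $a^{(\vartheta,t)}(v,v)\ge\alpha\Vert v\Vert_{H_0^2(\Omega)}^2$.

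I expect the only genuine obstacle to be this last part: the coefficient $a^{(1)}_i$ is not sign-definite a priori, and without hypothesis \eqref{hip:zeta0_H4} the form need not be coercive. Hypothesis \eqref{hip:zeta0_H4} — the boundary speed staying below the characteristic speed — is precisely what makes $a^{(1)}_i>0$ and thereby restores coercivity; the remaining work is routine bookkeeping with the constants from \eqref{hip:dominio2D_H1} and the standard norm equivalence on $H_0^2(\Omega)$.
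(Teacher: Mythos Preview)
Your proposal is correct and follows essentially the same route as the paper: bilinearity is immediate, continuity comes from Cauchy--Schwarz together with the $L^\infty$ bounds on $a^{(1)}_i$, $b_1$, $b_2$ furnished by \eqref{hip:dominio2D_H1}, and coercivity hinges on hypothesis \eqref{hip:zeta0_H4} making $a^{(1)}_i>0$, after which the equivalence of $\vert\Delta\cdot\vert_0$ with the full $H_0^2$ norm finishes the job. The only cosmetic difference is that the paper retains all three nonnegative terms in the coercivity lower bound via a single constant $\kappa_0$, whereas you discard the first two and keep only $b_2(t)\vert\Delta v\vert_0^2$; your version is in fact slightly cleaner, since it sidesteps the need to argue that $\vert\nabla\vartheta(t)\vert_0^2$ stays bounded away from zero.
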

    	
    	\begin{proof} The proof is given in three steps:

\begin{enumerate}
			
	\item The bilinearity of the functional $a^{(\vartheta,t)}(\cdot,\cdot)$ is immediate.
			
	\item The coercivity of the functional $a^{(\vartheta,t)}(\cdot,\cdot)$:
	\begin{equation}\label{formaBilinear:coerciva}
	   	\begin{aligned}
			a^{(\vartheta,t)}(w,w) &= \big(a^{(1)}_i(t),(\nabla_{y_i}w)^2\big) + b_1(t)\vert\vartheta(t)\vert_0^2~\!\vert\nabla w\vert_0^2 + b_2(t)\vert\Delta{w}\vert_0^2 \\
								    &\geq \kappa_0~\!\big[\vert\nabla_{y_i}w\vert_0^2 + \vert\nabla w\vert_0^2 + \vert\Delta{w}\vert_0^2\big]\geq c~\!\Vert w\Vert_V^2.
		\end{aligned}
	\end{equation}
	The constants are given by
	\begin{align*}
		\kappa_0 = \min\big\{\min_{t\in[0,T]} b_1(t)\vert\vartheta(t)\vert_0^2,~\min_{t\in[0,T]} b_2(t),~\min_{i\in\{1,\cdots,n\}}\big[\min_{t\in[0,T]}\big(\inf_{y\in \Omega}a^{(1)}_i(y,t)\big)\big]\big\}.
	\end{align*}			

	We note that, from \eqref{def:asEbs} and $\vert\vartheta\vert_0^2\not\equiv 0$, once $\vartheta\in\{v,v_h\}$ and $v,v_h$ are non-trivial solutions,
	\begin{equation}\label{formaBilinear:b2>0}
		\begin{aligned}		
			\min_{t\in[0,T]} b_1(t)\vert\vartheta\vert_0^2 &= \min_{t\in[0,T]} \frac{\zeta_1\vert\vartheta\vert_0^2}{K^4} > \frac{\zeta_1\vert\vartheta\vert_0^2}{K_1^4} > 0\\
			\min_{t\in[0,T]} b_2(t) &= \min_{t\in[0,T]} ~~\frac{1}{K^4}~~\! > ~~\!\frac{1}{K_1^4}~~\! > 0~\!.
		\end{aligned}
	\end{equation}		
	And from hypothesis \eqref{hip:zeta0_H4},
	\begin{equation}\label{formaBilinear:a_i^(1)>0}	
		\begin{aligned}
			\min_{t\in[0,T]}\big(\inf_{y\in \Omega}a^{(1)}_i(y,t)\big) &= \min_{t\in[0,T]}\Big\{ \frac{1}{K^2}\big[\zeta _0 - 4(K')^2\big]\Big\} > 0.
		\end{aligned}							
	\end{equation}		
	Therefore, by \eqref{formaBilinear:b2>0} and \eqref{formaBilinear:a_i^(1)>0}, $\kappa_0>0$, concluding the coercivity.			
	
	\item The continuity of the functional $a^{(\vartheta,t)}(\cdot,\cdot)$:
	\begin{align}
						   		   \nonumber
		a^{(\vartheta,t)}(v,v) &\leq \vert(a^{(1)}_i(t)\nabla_{y_i} v~,\nabla_{y_i}w)\vert + b_1(t)\vert\nabla \vartheta(t)\vert_0^2\vert(\nabla v,\nabla w)\vert + b_2(t)\vert(\Delta{v},\Delta{w})\vert\\
						   		   \label{formaBilinear:continua}
					        	   &\leq \kappa_1\big[|\nabla_{y_i} v|_0~|\nabla_{y_i} w|_0 + \vert\nabla v\vert_0\vert\nabla w\vert_0^2 + |\Delta v|_0~|\Delta w|_0\big] \\
					        	  \nonumber
					        	   &\leq~ c~\!||v||_V||w||_V,
	\end{align}
	where we consider $\vert\nabla_{y_i}v\vert_0 \leq \vert\nabla v\vert_0$ and apply the inequalities of Cauchy-Schwarz and Poincare-Friedrich and the equivalence of norms $ \vert \Delta v\vert_0 $ and $\Vert v\Vert_2$ in $H_0^2(\Omega)$. The constant are given by
	\begin{align*}
		\kappa_1 = \max_{i\in\{1,\cdots,n\}}\big\{\max_{t\in[0,T]}\big[\sup_{y~\!\in~\!\Omega} |a_i^{(1)}(y,t)| + b_1(t)(\vert v(t)\vert_0^2 + \vert v_h(t)\vert_0^2) +  b_2(t)\big]\big\}.
	\end{align*}
	Once $\{v(t),v_h(t)\}\subset V=H_0^2(\Omega)$, we have $\vert v\vert_0 + \vert v_h\vert_0^2 < +\infty$.
\end{enumerate}				
	
	Thus, we conclude that the functional $a^{(\vartheta,t)}(\cdot,\cdot)$,  is bilinear, continuous and coercive, $\forall t\in[0,T[$ and $\vartheta\in\{v,v_h\}$.		
\end{proof}			

	We return the proof of the theorem: Summing and subtracting the Ritz projection of $v$ and its derivatives conveniently on each term of the equation \eqref{estCont:dif_v_vh}, using the decomposition of the error \eqref{estCont:decompErro}, and taking in particular $w_h=\xi'\in V_h$ and using the \eqref{def:projecaoOrtogonal}, we get:		
	\begin{equation}\label{estCont:somaSubProj}
		\Gamma_1(\xi,v_h,t) + \Gamma_2(\xi,t) = -\big[\Gamma_1(\rho,v,t) + \Gamma_2(\rho,t) + \mu(t)\big],
	\end{equation}
	where $\Gamma_1: V\times\{v,v_h\}\times[0,T[~\longrightarrow~\mathbb{R}$, $~\Gamma_2: V\times[0,T[~\longrightarrow~\mathbb{R}~$ and $~\mu:[0,T[~\longrightarrow~\mathbb{R}~$ such that
	\begin{align*}	
		\Gamma_1(\chi,\vartheta,t) &= \big(\chi'',\xi'\big) + a^{(\vartheta,t)}(\chi,\xi') + \nu~\!\big(\chi',\xi'\big) +  \big(a^{(4)}_i(t)\nabla_{y_i}\chi',\xi'\big),\\
		\Gamma_2(\chi,t) 			&= \big(a^{(2)}_{ij}(t)\nabla_{y_i,y_j}\chi,\xi'\big) + \big(a^{(5)}_i(t)\nabla_{y_i}\chi,\xi'\big),\\
		\mu(t) &= b_1(t)\big[\vert\nabla v_h\vert_0^2 - \vert\nabla v\vert_0^2\big]\big(R_h \nabla v,\nabla\xi'\big).
	\end{align*}

	Next, we will analyze each term of the equation \eqref{estCont:somaSubProj} \\

 \noindent\textbf{Analisys: Terms dependent of $\xi$}
 
 	By definition, we have
 	\begin{align*}
 		\Gamma_1(\xi,v_h,t) = \big(\xi'',\xi'\big) + a^{(v_h,t)}(\xi,\xi') + \nu~\!\big(\xi',\xi'\big) +  \big(a^{(4)}_i(t)\nabla_{y_i}\xi',\xi'\big).
 	\end{align*}
 	Analysing each term, we have
	\begin{equation}\label{estCont1}		
		\big(\xi'',\xi'\big) = \frac{1}{2}\frac{d}{dt}|\xi'|_0^2,
		\qquad \nu\big(\xi',\xi'\big) = \nu~\!|\xi'|_0^2 \geq 0.
	\end{equation}	
	Integrating by parts and using the definition \eqref{def:asEbs}, follows that
  	\begin{equation}\label{estCont2}
			\big(a^{(4)}_i(t)\nabla_{y_i}\xi',\xi'\big) = - \frac{1}{2}\int_\Omega \big(\nabla_{y_i} a^{(4)}_i(y,t)\big)\big(\xi'\big)^{2} dy = \frac{K'(t)}{K(t)}\vert\xi'\vert_0^2\geq 0,
	   	\end{equation}
since the $\ds\nabla_{y_i} a^{(4)}_i(y,t) = -\frac{2K'(t)}{K(t)}$ and $K'(t) > 0$, for the hypotesis \eqref{hip:dominio2D_H1}. 

	Now using the definition \eqref{def:asEbs} and the mean value theorem for integrals, we obtain
	\begin{equation}\label{estCont3}
	   	\begin{aligned}
	   		a^{(v_h,t)}(\xi,\xi') &= \frac{1}{2}~\Big[\int_{\Omega}a^{(1)}_i(t)\frac{d}{dt}(\nabla_{y_i} \xi)^2~dy\Big] + \frac{b_1(t)}{2}\vert v_h\vert_0^2~\!\frac{d}{dt}\vert\nabla\xi\vert_0^2 + \frac{b_2(t)}{2}\frac{d}{d t}|\Delta\xi|_0^{2}\\
	   							   &= \frac{1}{2}\frac{d}{d t}\big[a^{(1)}_i(\sigma_1,t)~\!\vert\nabla_{y_i}\xi\vert_0^2 + b_1(t)\vert v_h\vert_0^2\vert\nabla\xi\vert_0^2 + b_2(t)|\Delta\xi|_0^{2}\big]\\
	   							   &\qquad-\frac{1}{2}~\big[a_i'~\!^{(1)}(\sigma_2,t)~\!\big|\nabla_{y_i} \xi\big|_0^2 + \frac{d}{dt}\big(b_1(t)\vert v_h\vert_0^2\big)\vert\nabla\xi\vert_0^{2} + b'_2(t)|\Delta\xi|_0^{2}\big],
		\end{aligned}
 	\end{equation}
 	where $\sigma_1,\sigma_2\in\Omega$. So, by \eqref{estCont1} -- \eqref{estCont3} and decreasing the term $\ds\frac{\kappa_2}{2}\vert\xi'\vert_0^2$ :
 	\begin{equation}\label{estCont_Gamma1}
 		\begin{aligned}
	 		\Gamma_1(\xi,v_h,t) &\geq \frac{1}{2}\frac{d}{dt}\big[|\xi'|_0^2 + a^{(1)}_i(\sigma_1,t)~\!\vert\nabla_{y_i}\xi\vert_0^2 + b_1(t)\vert v_h\vert_0^2\vert\nabla\xi\vert_0^2 + b_2(t)|\Delta\xi|_0^{2}\big] - \frac{\kappa_2}{2}E(t),
 		\end{aligned}
 	\end{equation}
 	where
 	\begin{align*}
 		&E(t) = \vert\nabla_{y_i}\xi\vert_0^{2} + \vert\nabla\xi\vert_0^{2} + \vert\Delta\xi\vert_0^2 + \vert\xi'\vert_0^2,
 	\end{align*}
 	and $\ds\kappa_2 = \max_{i,j\in\{1,\cdots,n\}}\big\{\max_{t\in[0,T]}\big[\sup_{y~\!\in~\!\Omega} \big(|a_{i}'~\!^{(1)}(y,t)| + \Big|\frac{d}{dt}\big(b_1(t)\vert v_h(t)\vert_0^2\big)\Big| + |b'_2(t)|\big)\big]\big\}$.
 	
 	We note that, by definition \eqref{def:asEbs} and applying the inequality of Cauchy-Schwarz,
 	\begin{equation}\label{analysing}
	 	\begin{aligned}
	 		\Big|\frac{d}{dt}\big(b_1(t)\vert v_h\vert_0^2\big)\Big| &= b'_1(t)\vert v_h\vert_0^2 + b_1(t)\frac{d}{dt}\big(v_h,v_h\big) \leq c\big[\vert v_h\vert_0^2 + \vert v_h\vert_0\vert v'_h\vert_0\big] < +\infty
	 	\end{aligned}
  	\end{equation}
 	where $\ds c = \frac{4~\!\zeta_1}{K_0^5}(K_2 + K_0)$.
 	By Theorem  \eqref{teo:EeUfixa2D}, we have $v'_h(t)\in L^2(\Omega)$. Soon, by \eqref{analysing}, we ensure $\kappa_2$ limited.
 	
 	Now, analyzing each term of $\Gamma_2(\xi,t)$, give by
 	\begin{align*}
 		\Gamma_2(\xi,t) = \big(a^{(2)}_{ij}(t)\nabla_{y_i,y_j}\xi,\xi'\big) + \big(a^{(5)}_i(t)\nabla_{y_i}\xi,\xi'\big),
	\end{align*}
	we have, using the inequality of Cauchy-Schwarz and Young, the definition of $\vert\Delta\xi\vert_0$ and increasing the norm $\vert\nabla\xi\vert_0^{2}$, follows that
 	\begin{equation}\label{estCont_Gamma2}
 		\begin{aligned}
 			\Gamma_2(\xi,t) \leq \frac{\kappa_3}{2}E(t),
 		\end{aligned}
 	\end{equation}
 	where $\kappa_3 = \ds 2\!\!\!\max_{i,j\in\{1,\cdots,n\}}\big\{\max_{t\in[0,T]}\big[\sup_{y~\!\in~\!\Omega} \big(|a_{ij}^{(2)}(y,t)| + |a_{i}^{(4)}(y,t)| + |a_{i}^{(5)}(y,t)|\big)\big]\big\}$.\\
 	
	\noindent\textbf{Analisys: Terms dependent on $\rho$}
   	
	By definition, we have
	\begin{align*}
		\Gamma_1(\rho,v,t) + \Gamma_2(\rho,t) &= \big(\rho'',\xi'\big) + a^{(v,t)}(\rho,\xi') + \nu~\!\big(\rho',\xi'\big) +  \big(a^{(4)}_i(t)\nabla_{y_i}\rho',\xi'\big) \\
		&\qquad+ \big(a^{(2)}_{ij}(t)\nabla_{y_i,y_j}\rho,\xi'\big)+ \big(a^{(5)}_i(t)\nabla_{y_i}\rho,\xi'\big).
	\end{align*}	   	   		        
   	Analysing the terms of $\Gamma_1(\rho,v,t)$ and $\Gamma_2(\rho,t)$ applying the inequality of Cauchy-Schwarz and Young, follows that
%    \begin{equation}\label{estCont4}
%		\begin{aligned}
%			&\big(\rho'',\xi'\big) \leq \frac{1}{2}\big[\vert \rho''\vert_0 ^{2} + \vert \xi'\vert_0 ^{2}\big],~~ \nu\big(\rho',\xi'\big) \leq \frac{\nu}{2}\big[\vert \rho'\vert _0^{2} + \vert \xi'\vert _0^{2} \big],~~\big(a^{(2)}_{ij}(t)\nabla_{y_i,y_j}\rho,\xi'\big)\leq\frac{\kappa_2}{2}~\!\big[\vert\Delta\rho\vert _0^{2} + \vert \xi'\vert _0^{2}\big],\\
%			&\big(a^{(3)}_i(t)\nabla_{y_i}\rho,\xi'\big)\leq\frac{\kappa_2}{2}~\!\big[\vert \nabla\rho\vert _0^{2} + \vert \xi'\vert _0^{2}\big], \quad\big(a^{(4)}_i(t)\nabla_{y_i}\rho'~,\xi'\big)\leq\frac{\kappa_2}{2}~\!\big[\vert \nabla\rho'\vert _0^{2} + \vert \xi'\vert _0^{2}\big].
%		\end{aligned}
%	\end{equation}		
%
%	Now, by \eqref{estCont4},
	\begin{equation}\label{estCont_Upsilon_rho}
		\begin{aligned}
			\Gamma_1(\rho,v,t) + \Gamma_2(\rho,t) &\leq \frac{\kappa_4}{2}\big[\vert\rho''\vert_0^2 + \vert\rho'\vert_0^2 + \vert\nabla\rho'\vert_0^2 + \vert\nabla\rho\vert_0^2 + \vert\Delta\rho\vert_0^2 + \vert\xi'\vert_0^2\big] \\
			&\leq \frac{\kappa_4}{2}\big[\vert\rho''\vert_0^2 + \Vert\rho'\Vert_1^2 + \Vert\rho\Vert_2^2 + \vert\xi'\vert_0^2\big],
		\end{aligned}
	\end{equation}
	once $\vert\rho'\vert_0^2 + \vert\nabla\rho'\vert_0^2 \leq \Vert\rho'\Vert_1^2$ and $\vert\nabla\rho\vert_0^2 + \vert\Delta\rho\vert_0^2 \leq \Vert\rho\Vert_2^2$, and where 
	$$
	\kappa_4 = 5(\kappa_1 + \kappa_2 + \kappa_3 + 1 + \nu).
	$$

	We note that by definition of Ritz projection in \eqref{def:projecaoOrtogonal}, we get $a^{(v,t)}(\rho,\xi') = a^{(v,t)}(v - R_h v,\xi') = 0$.\\

	\noindent\textbf{Analisys: Nonlinear term}
		
	For the term $\mu(t)$, after integrating by parts, applying the module and the inequalities of Cauchy-Schwarz and Young, we get
	\begin{equation}\label{estCont_mu}
		\begin{aligned}
	 		&\mu(t) %= b_1(t)\big[\vert\nabla v\vert_0^2 - \vert\nabla v_h\vert_0^2\big]\big(\nabla R_h \nabla v, \xi'\big) \\
	 		\leq b_1(t)~\!\vert\nabla v - \nabla v_h\vert_0\big[\vert\nabla v\vert_0 + \vert\nabla v_h\vert_0\big]\big|\big(\nabla R_h \nabla v, \xi'\big)\big| \\
 			&\qquad\!\leq~ \kappa_1\big[\vert\nabla\rho\vert_0 + \vert\nabla\xi\vert_0\big]~\!\vert \nabla R_h \nabla v\vert_0\vert\xi'\vert_0 ~\leq \frac{\kappa_5}{2}\big[\vert\nabla\rho\vert_0^2 + \vert\nabla\xi\vert_0^2 + \vert\xi'\vert_0^2\big],
 		\end{aligned}
	\end{equation}
	where $\kappa_5 = \kappa_4 + 2~\!\kappa_1~\!\vert \nabla R_h\nabla v\vert_0 < +\infty$, once $R_h \nabla v\in V_h\subset V=H_0^2(\Omega)$ implies $\nabla R_h \nabla v\in L^2(\Omega)$.\\

	\noindent\textbf{Final estimate}
	
	Now, passing the term $\Gamma_2(\xi,t)$ to the right side of \eqref{estCont:somaSubProj} and using the estimates \eqref{estCont_Gamma1} -- \eqref{estCont_mu}, we obtain
	\begin{equation}\label{estCont:preintegral}
		\frac{1}{2}\Big[\frac{d}{dt}\Theta_1(t)-\kappa_2~\!E(t)\Big] \leq \Gamma_1(\xi,v_h,t) = -\big[ \Gamma_1(\rho,v,t) + \Gamma_2(\rho+\xi,t) + \mu(t)\big] \leq \frac{\kappa_5}{2}~\!\Theta_3(t)
	\end{equation}
	where
	\begin{align*}		
		&\Theta_1(t) = |\xi'|_0^2 + a^{(1)}_i(\sigma_1,t)~\!\vert\nabla_{y_i}\xi\vert_0^2 + b_1(t)\vert v_h\vert_0^2\vert\nabla\xi\vert_0^2 + b_2(t)|\Delta\xi|_0^{2},\\
		&\Theta_2(t) = E(t) + \vert\rho''\vert_0^2 + \Vert\rho'\Vert_1^2 + \Vert\rho\Vert_2^2.
	\end{align*}	
		
	Passing the term $\kappa_2~\!E(t)$ of left to the right side of \eqref{estCont:preintegral}, integrating from $0$ to $t<T$ and multiplying by both side by $2$, we get
	\begin{equation}\label{estCont:integral}
		\kappa_0~\!E(t) \leq \kappa_1~\!E(0) + \kappa_5\Big[\int_0^t \vert\rho''\vert_0^2 + \Vert\rho'\Vert_1^2 + \Vert\rho\Vert_2^2 ~dt + \int_0^t E(s)~dt\Big].
	\end{equation}
		
	From hypothesis \eqref{hip:condInicialContinuo_H3}, follows that
	\begin{equation}\label{estCont:condInicial}
		E(0) \leq C_1h^4, \quad \mbox{for some}\quad C_1>0.
	\end{equation}
	
	On the other hand, by the C\'{e}a Lemma and Douglas-Dupont Theorem, follows that
	\begin{equation}\label{estCont:normaRho}
		\begin{aligned}
			&\int_0^t \vert\rho''(s)\vert_0^2 + \Vert\rho'(s)\Vert_1^2 + \Vert\rho(s)\Vert_2^2~ds \leq~ C_2~\!h^4\sum_{k=0}^{2}\Big\Vert\frac{d^k v}{dt^k}\Big\Vert^2_{L^2(0,T;H^4(\Omega))}, ~\mbox{for some}~C_2>0.
		\end{aligned}
	\end{equation}
	Substituting the estimations \eqref{estCont:condInicial} and \eqref{estCont:normaRho} in \eqref{estCont:integral} and dividing both sides by $\kappa_0$, since $\kappa_0 > 0 $ by \eqref{formaBilinear:b2>0} and \eqref{formaBilinear:a_i^(1)>0}, we obtain
	\begin{equation}\label{estCont:gronwall}
		E(t) \leq C_3~\!h^4 + C_3 \int_0^t E(s)~ds,
	\end{equation}		
	where $~C_3 = (\kappa_5/\kappa_0)(C_1+C_2)~\!$.			
	As the estimate \eqref{estCont:gronwall} holds for all $t\in[0, T [$, for Gronwall-Bellman's Lemma, we have to		
	\begin{equation}\label{estCont:xi_final}
		\frac{1}{2}\big(\vert\xi'(t)\vert_0 + \vert\Delta\xi(t)\vert_0\big)^2\leq \vert\xi'(t)\vert_0^2 + \vert\Delta\xi(t)\vert_0^2 \leq E(t) \leq C_5~\!h^4,~\mbox{where}~C_5 = C_3e^{C_3 T}.
	\end{equation}		 			
	Therefore, from the decomposition of the error \eqref{estCont:decompErro} and the estimates \eqref{estCont:normaRho} and \eqref{estCont:xi_final}, we obtain	
	\begin{equation}\label{estCont:final}
		\vert e'(t)\vert_0 + \vert \Delta e(t)\vert _0 \leq \vert \rho'(t)\vert _0 + \vert \Delta\rho(t)\vert _0 + \vert \xi'(t)\vert _0 + \vert \Delta\xi(t)\vert _0 \leq C~\!h^2,~\forall t\in[0,T[,
	\end{equation}
	for some $C>0$ independent of $t$ and $h$.		

	Finally, we just take \eqref{estCont:final} the  essential supreme in time and using the equivalence of norms $\vert\Delta e(t)\vert_0$ and $\Vert e(t)\Vert_2$ in $H_0^2(\Omega)$ to obtain the estimate \eqref{estimativaContinuo}.
    \end{proof}

 \subsection{Fully-Discrete Problem}	
   	
   	In order to obtain an error estimate in the norm $L^\infty(0,T);L^2(\Omega)$ for the  fully-discrete problem, we discretize the fixed  time interval $[0,T]$, using the discretization defined in \eqref{sistemaNewmark}.

   Let $\theta \in [0,1]$ and consider the notation
	 $v^\eta = v(\cdot,t_\eta)$, for  $\eta\in\{1,\cdots,N-1\}$ and the operators differences
	\begin{equation}\label{def:difsFinitas}
		\begin{aligned}
			&\delta^2 v^\eta = \frac{v^{\eta+1} -2~\!v^\eta + v^{\eta-1}}{(\Delta t)^2}, \quad\delta v^\eta = \frac{v^{\eta+1} - v^{\eta-1}}{2~\!\Delta t},\quad \delta v^{\eta+1/2} = \frac{v^{\eta+1} - v^{\eta}}{\Delta t},\\
			&\qquad\delta v^{\eta-1/2} = \frac{v^{\eta} - v^{\eta-1}}{\Delta t},\quad v^{\eta+\theta} = \theta(v^{\eta+1}+v^{\eta-1}) + (1-2\theta)v^\eta.
		\end{aligned}
	\end{equation}
Using the operators \eqref{def:difsFinitas}	, we introduce the following fully discrete method based on the semi-discrete problem \eqref{prob:fixaVar}, given by the following equation:
		\begin{equation}\label{prob:fixaVarDiscretoExato}
		\begin{aligned}
			&\big(\delta^2 v^\eta,w\big) + a^{\eta+\theta}(v,w) + \nu~\!\big(\delta v^\eta ,w\big) + \big(\big[a^{(2)}_{ij}\nabla_{y_i,y_j}~\!v\big]^{\eta+\theta},w\big) \\
			&\quad+ \big(a^{(4,\eta)}_i\nabla_{y_i}\delta v^\eta,w\big) + \big(\big[a^{(5)}_i\nabla_{y_i}v\big]^{\eta+\theta},w\big) = \big(\psi^\eta,w\big),~\forall~w\in V,
		\end{aligned}
	\end{equation}
	where $a_i^{(4,\eta)} = a_i^{(4)}(y,t_\eta)~$ and
	\begin{equation}\label{def:formaBilinearDisc}
		\begin{aligned}	
			&a^{(\vartheta,~\!\eta+\theta)}(v,w) = \sum_{k=1}^{3}\big(\big[f_k D_k v\big]^{\eta+\theta}, D_k w\big),\\%\big(\big[a^{(1)}_i\nabla_{y_i} v\big]^{\eta+\theta},\nabla_{y_i}w\big) + \big(\big[b_1\vert\nabla \vartheta\vert_0^2\nabla v\big]^{\eta+\theta},\nabla w\big) + \big(\big[b_2\Delta{v}\big]^{\eta+\theta},\Delta w\big),\\
			&\psi^\eta = \delta^2 v^\eta - v_{tt}^{\eta+\theta} + \nu~\!\big(\delta v^\eta - v_t^{\eta+\theta}\big) + \big(a^{(4,\eta)}_i\nabla_{y_i} \delta v^\eta - \big[a^{(4)}_i\nabla_{y_i}v_t\big]^{\eta+\theta}\big),
		\end{aligned}
	\end{equation}
	were $f_1: \Omega\times\{1,\cdots,N\}\longrightarrow\mathbb{R}$, $f_2: \{v,~\!v_h\}\times\{1,\cdots,N\}\longrightarrow\mathbb{R}$ and $f_3: \{1,\cdots,N\}\longrightarrow\mathbb{R}$, such that $f_1^\eta(y) = a_i^{(1,\eta)}(y),~f_2^\eta(\vartheta) = b_1^\eta\vert \vartheta^\eta\vert_0^2$ and $f_3^\eta = b_2^\eta$, and $D_k$ representing the operators $\nabla_{y_i},~\nabla$ and $\Delta$, for the $k=1,2,3$, respectively.

	In the following theorem, we will present an error estimate for the discrete time associated with the problem \eqref{prob:fixaVarDiscretoExato}:	
		
	\begin{myth}(Error estimates for the fully-discrete problem)
		\label{teo:tempoDiscreto}
		
		Let $v$ be the solution to the problem \eqref{prob:fixaVarDiscretoExato}. Assuming $v_0,v_1\in H_0^2(\Omega)\cap H^4(\Omega)$, and considering the hypotheses of the theorem \ref{teo:tempoContinuo}, we have the following estimate for the approximation error $\forall~\!\theta\in~\! ]1/4,1]$:
		\begin{align*}
			\Vert\delta e\Vert_{L^\infty(0,T;L^2(\Omega))} + \Vert e\Vert_{L^\infty(0,T;H_0^2(\Omega))} \leq C~\!\big[h^2 + (\Delta t)^2\big],
		\end{align*}
		where $C>0$ is constant and independent of $h$ and $\Delta t$.
	\end{myth}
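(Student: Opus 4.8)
The plan is to mimic the semi-discrete argument of Theorem~\ref{teo:tempoContinuo}, replacing the continuous energy method by a \emph{discrete} energy identity tuned to the Newmark parameter $\theta$. I set $e^\eta=v^\eta-v_h^\eta=\rho^\eta+\xi^\eta$ with $\rho^\eta=v^\eta-R_hv^\eta$ and $\xi^\eta=R_hv^\eta-v_h^\eta$, $R_h$ the Ritz projection of \eqref{def:projecaoOrtogonal}. Subtracting the scheme satisfied by $v_h^\eta$ from \eqref{prob:fixaVarDiscretoExato}, inserting this decomposition and testing with $w=\delta\xi^\eta\in V_h$, I first rewrite the discrete stiffness term $a^{(v_h,\eta+\theta)}(\xi,\delta\xi^\eta)=\sum_k\bigl([f_kD_k\xi]^{\eta+\theta},D_k\delta\xi^\eta\bigr)$ as $a_\eta(\xi^{\eta+\theta},\delta\xi^\eta)$ plus commutators, where $a_\eta(\cdot,\cdot)=\sum_k\bigl(f_k^\eta D_k\cdot,D_k\cdot\bigr)$ is the \emph{frozen} form — symmetric, and coercive by the bilinear-form Lemma (this uses \eqref{hip:zeta0_H4}); the commutators are $O(\Delta t)$ times first differences of $\xi$, arising from the time increments of $f_1,f_2,f_3$, which $K\in C^2$ (\eqref{hip:dominio2D_H1}) and $v_h,v_h'\in L^2$ (Theorem~\ref{teo:EeUfixa2D}) keep bounded. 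The Ritz-orthogonal part $\rho$ then drops out of the principal form up to analogous commutators, exactly as $a^{(v,t)}(\rho,\xi')=0$ in the continuous case.

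The heart of the proof is the discrete energy inequality. The inertial term telescopes, $(\delta^2\xi^\eta,\delta\xi^\eta)=\tfrac{1}{2\Delta t}\bigl(|\delta\xi^{\eta+1/2}|_0^2-|\delta\xi^{\eta-1/2}|_0^2\bigr)$, and the damping terms $\nu(\delta\xi^\eta,\delta\xi^\eta)+\bigl(a^{(4,\eta)}_i\nabla_{y_i}\delta\xi^\eta,\delta\xi^\eta\bigr)\ge0$ exactly as in \eqref{estCont1}--\eqref{estCont2}. For $a_\eta(\xi^{\eta+\theta},\delta\xi^\eta)$ I use $\xi^{\eta+\theta}=\tfrac12(\xi^{\eta+1}+\xi^{\eta-1})+(\theta-\tfrac12)(\Delta t)^2\delta^2\xi^\eta$ together with the symmetry of $a_\eta$ to obtain a telescoping identity $a_\eta(\xi^{\eta+\theta},\delta\xi^\eta)=\tfrac{1}{2\Delta t}(\mathcal{A}^{\eta+1/2}-\mathcal{A}^{\eta-1/2})$ with the modified energy
\[
\mathcal{A}^{\eta+1/2}=\tfrac14\,a_\eta\bigl(\xi^{\eta+1}+\xi^\eta,\xi^{\eta+1}+\xi^\eta\bigr)+\tfrac{4\theta-1}{4}\,a_\eta\bigl(\xi^{\eta+1}-\xi^\eta,\xi^{\eta+1}-\xi^\eta\bigr),
\]
which, by coercivity and the parallelogram law, is equivalent to $\Vert\xi^{\eta+1}\Vert_2^2+\Vert\xi^\eta\Vert_2^2$ \emph{precisely when $\theta>1/4$} (the equivalence constants degenerate as $\theta\to1/4^+$). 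Absorbing the inessential index-mismatch between $a_\eta$ and $a_{\eta\pm1}$ (a further $O(\Delta t)$ correction) into the error terms, the working quantity is $\mathcal{E}^{\eta+1/2}:=|\delta\xi^{\eta+1/2}|_0^2+\mathcal{A}^{\eta+1/2}\sim|\delta\xi^{\eta+1/2}|_0^2+\Vert\xi^{\eta+1}\Vert_2^2+\Vert\xi^\eta\Vert_2^2$.

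The remaining contributions are estimated as in the semi-discrete proof: the lower-order spatial terms involving $a^{(2)}_{ij},a^{(5)}_i$ and the nonlinear term $\mu^\eta=b_1^\eta\bigl[|\nabla v_h^\eta|_0^2-|\nabla v^\eta|_0^2\bigr]\bigl(R_h\nabla v^\eta,\nabla\delta\xi^\eta\bigr)$ are bounded by Cauchy--Schwarz and Young, cf.\ \eqref{estCont_Gamma2}--\eqref{estCont_mu}, by $C(\mathcal{E}^{\eta+1/2}+\mathcal{E}^{\eta-1/2})$ plus norms of $\rho,\rho',\rho''$. The consistency error $(\psi^\eta,\delta\xi^\eta)$ supplies the $(\Delta t)^2$: Taylor expansion with integral remainder together with the regularity \eqref{hip:regularidades_v's_H2} and the time-smoothness of the coefficients (from \eqref{hip:dominio2D_H1}) gives $|\psi^\eta|_0\le C(\Delta t)^2$, since $\delta^2v^\eta-v_{tt}^{\eta+\theta}$, $\delta v^\eta-v_t^{\eta+\theta}$ and $a^{(4,\eta)}_i\nabla_{y_i}\delta v^\eta-[a^{(4)}_i\nabla_{y_i}v_t]^{\eta+\theta}$ are each $O((\Delta t)^2)$ in $L^2(\Omega)$; hence $(\psi^\eta,\delta\xi^\eta)\le C(\Delta t)^4+C\mathcal{E}^{\eta+1/2}$.

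Collecting everything yields $\mathcal{E}^{\eta+1/2}-\mathcal{E}^{\eta-1/2}\le C\Delta t(\mathcal{E}^{\eta+1/2}+\mathcal{E}^{\eta-1/2})+C\Delta t\bigl[h^4+(\Delta t)^4+|\rho''|_0^2+\Vert\rho'\Vert_1^2+\Vert\rho\Vert_2^2\bigr]$. Summing over $\eta$, taking $\Delta t$ small enough to absorb the $\mathcal{E}^{n+1/2}$ term on the right (here $\Delta t$ must be restricted), bounding $\Delta t\sum_\eta(|\rho''|_0^2+\Vert\rho'\Vert_1^2+\Vert\rho\Vert_2^2)\le Ch^4$ via the C\'ea and Douglas--Dupont estimate $\Vert\rho\Vert_m\le Ch^{4-m}\Vert v\Vert_4$ with \eqref{hip:regularidades_v's_H2}, and applying the discrete Gronwall lemma, I obtain $\mathcal{E}^{n+1/2}\le C(\mathcal{E}^{1/2}+h^4+(\Delta t)^4)$ uniformly in $n$. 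The start-up term $\mathcal{E}^{1/2}$ is $\le C(h^4+(\Delta t)^4)$ by the initial-data hypothesis \eqref{hip:condInicialContinuo_H3} and the second-order approximation $d^{-1}=d^1-2\Delta t\,d'(0)$ underlying \eqref{sistema_n=0}. Thus $|\delta\xi^{\eta+1/2}|_0^2+\Vert\xi^{\eta+1}\Vert_2^2\le C(h^4+(\Delta t)^4)$; combined with $\Vert\rho\Vert_{L^\infty(0,T;L^2(\Omega))}\le Ch^4$, $\Vert\rho\Vert_{L^\infty(0,T;H^2(\Omega))}\le Ch^2$, the triangle inequality via \eqref{estCont:decompErro} and the equivalence of $|\Delta\cdot|_0$ with $\Vert\cdot\Vert_2$ on $H_0^2(\Omega)$, this gives the claimed bound $\Vert\delta e\Vert_{L^\infty(0,T;L^2(\Omega))}+\Vert e\Vert_{L^\infty(0,T;H_0^2(\Omega))}\le C[h^2+(\Delta t)^2]$. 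The main obstacle is the discrete stiffness step: producing a genuinely coercive modified energy forces the strict condition $\theta>1/4$ and requires careful handling of the non-symmetric, time-varying coefficients; secondary care is needed in matching the truncation-error Taylor estimates to \eqref{hip:regularidades_v's_H2} and in the $\Delta t$-restriction needed for the discrete Gronwall step.
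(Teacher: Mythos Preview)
Your proposal is correct and follows the same overall architecture as the paper --- error splitting $e^\eta=\rho^\eta+\xi^\eta$, testing with $\delta\xi^\eta$, telescoping the inertial term, sign control on the damping terms, Cauchy--Schwarz/Young on the lower-order and nonlinear pieces, Taylor remainders for $\psi^\eta$, C\'ea/Douglas--Dupont for $\rho$, and discrete Gronwall --- but the treatment of the principal stiffness term is genuinely different.

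You \emph{freeze} the coefficients, writing $\sum_k([f_kD_k\xi]^{\eta+\theta},D_k\delta\xi^\eta)=a_\eta(\xi^{\eta+\theta},\delta\xi^\eta)+O(\Delta t)$-commutators, and then invoke the classical Newmark energy identity, so that the restriction $\theta>1/4$ appears immediately as positivity of the weight $\tfrac{4\theta-1}{4}$ in $\mathcal{A}^{\eta+1/2}$. The paper instead expands $([f_kD_k\xi]^{\eta+\theta},D_k\delta\xi^\eta)$ directly, keeping the time-dependent $f_k$ inside, telescopes after summation in $\eta$ to a boundary quantity $\Theta_1^{N_0,N_0-1}$, and only at the very end recovers $\theta>1/4$ via the polar identity $(f_k^{N_0}D_k\xi^{N_0},D_k\xi^{N_0-1})=\tfrac14\int f_k^{N_0}[(D_k\xi^{N_0}+D_k\xi^{N_0-1})^2-(D_k\xi^{N_0}-D_k\xi^{N_0-1})^2]$. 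Your route is cleaner conceptually and isolates the role of $\theta$ earlier; the paper's route avoids introducing the freezing commutators as a separate bookkeeping step. A second, minor difference: you absorb the top-level term $C\Delta t\,\mathcal{E}^{n+1/2}$ by requiring $\Delta t$ small, whereas the paper places an $\varepsilon$ from Young's inequality in front of $\bar E^{N_0}$ (see the passage leading to \eqref{estDisc:final4}) and chooses $\varepsilon=\kappa_{10}/(2\kappa_7)$, so no smallness of $\Delta t$ is invoked. If you route the lower-order and nonlinear bounds through an $\varepsilon$-Young inequality in the same way, your argument also becomes unconditional in $\Delta t$.
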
	
	
	\begin{proof}				
		The decomposition of the error \eqref{estCont:decompErro}, for the discrete case, is given by $e^\eta = \rho^\eta + \xi^\eta = (v^\eta - R_h v^\eta) + (R_h v^\eta - v_h^\eta)$. Making the difference between the problem \eqref{prob:fixaVarDiscretoExato}  restricted to $V_h$ and its approximate problem, taking in particular $w_h=\delta\xi^\eta$ and using the decomposition of the error and the  Ritz projection, we obtain		
	\begin{equation}\label{estDisc:somaSubProj}
		\Gamma_1^\eta(\xi,v_h) + \Gamma_2^\eta(\xi) = (\psi^\eta,\delta\xi^\eta) - \big[\Gamma_1^\eta(\rho,v) + \Gamma_2^\eta(\rho) + \mu^\eta\big],
	\end{equation}
	where $\Gamma_1: V\times\{v,v_h\}\times\{0,\cdots,N\}\longrightarrow\mathbb{R}$, $~\Gamma_2: V\times\{0,\cdots,N\}\longrightarrow\mathbb{R}~$ and $~\mu:\{0,\cdots,N\}\longrightarrow\mathbb{R}~$ such that
	\begin{align*}	
		\Gamma_1^\eta(\chi,\vartheta) &= \big(\delta^2\chi^\eta,\delta\xi^\eta\big) + a^{(\vartheta,~\!\eta+\theta)}(\chi,\delta\xi^\eta) + \nu~\!\big(\delta\chi^\eta,\delta\xi^\eta\big) +  \big(a^{(4,\eta)}_i(t)\nabla_{y_i}\delta\chi^\eta,\delta\xi^\eta\big),\\
		\Gamma_2^\eta(\chi) 			&= \big(\big[a^{(2)}_{ij}\nabla_{y_i,y_j}\chi\big]^{\eta+\theta},\delta\xi^\eta\big) + \big(\big[a^{(5)}_i\nabla_{y_i}\chi\big]^{\eta+\theta},\delta\xi^\eta\big),\\
		\mu^\eta &= \big(\big[b_1\big(\vert\nabla v_h\vert_0^2 - \vert\nabla v\vert_0^2\big)R_h \nabla v\big]^{\eta+\theta}, \nabla\delta\xi^\eta\big).
	\end{align*}

	\noindent\textbf{Analysis: Terms dependent on $\xi$}
	
	By definition, we have
	\begin{align*}
		\Gamma_1^\eta(\xi,v_h) &= \big(\delta^2\xi^\eta,\delta\xi^\eta\big) + a^{(v_h,~\!\eta+\theta)}(\xi,\delta\xi^\eta) + \nu~\!\big(\delta\xi^\eta,\delta\xi^\eta\big) +  \big(a^{(4,\eta)}_i(t)\nabla_{y_i}\delta\xi^\eta,\delta\xi^\eta\big).
	\end{align*}
	
	Analysing each term, we get
	\begin{align*}
		(\delta^2\xi^\eta,\delta\xi^\eta) = \frac{1}{2\Delta t}\big(|\delta\xi^{\eta+1/2}|^2_0 - |\delta\xi^{\eta-1/2}|^2_0\big).
	\end{align*}
	Summing of $\eta=1$ at $N_0$, for some $1 < N_0 \leq N$, we have
	\begin{equation}\label{estDisc1}
		\sum_{\eta=1}^{N_0-1} (\delta^2\xi^\eta,\delta\xi^\eta) = \frac{1}{2\Delta t}\big(|\delta\xi^{N_0-1/2}|^2_0 - |\delta\xi^{1/2}|^2_0\big).
	\end{equation}
	We also have to
	\begin{equation}\label{estDisc11}
	 \nu~\!\big(\delta \xi^\eta ,\delta\xi^{\eta}\big) = \nu~\!|\delta\xi^\eta|_0^2 \geq 0.	
	\end{equation}
	Integrating by parts and using the definitions \eqref{def:asEbs}, we have the following equality:		
	\begin{equation}\label{estDisc2}
		\big(a^{(4,\eta)}_i\nabla_{y_i}\delta\xi^{\eta},\delta\xi^{\eta}\big) = - \frac{1}{2}\int_\Omega \Big(\nabla_{y_i} a^{(4,\eta)}_i(y)\Big)\big(\delta\xi^{\eta}\big)^{2} dy = \frac{K'~\!^\eta}{K^{\eta}}|\delta\xi^{\eta}|_0^2\geq 0,
	\end{equation}
	where, we use that $\ds\nabla_{y_i} a^{(4,\eta)}_i(y) = -2(K'~\!^\eta/K^\eta)$.

	For to analyse the second term of $\Gamma_1(\xi,v_h)$, we note that
	\begin{equation}\label{estDisc:a*n_f}
		\begin{aligned}
			\big([f~\!D\xi]^{\eta+\theta},D\delta\xi^{\eta}\big) &= \frac{\theta}{2\Delta t}\int_{\Omega}f^{\eta+1}\big(D\xi^{\eta+1}\big)^2 - f^{\eta-1}\big(D\xi^{\eta-1}\big)^2~dy - \theta\big(f'(\sigma_{k1})D\xi^{\eta+1},D\xi^{\eta-1}\big) \\
			&\qquad+ \frac{(1-2\theta)}{2\Delta t}\big[\big(f^\eta D\xi^{\eta+1},D\xi^{\eta}\big) - \big(f^\eta D\xi^{\eta},D\xi^{\eta-1}\big)\big],
%			&\big(\big[b_1\vert v_h\vert_0^2\nabla\xi\big]^{\eta+\theta},\nabla\delta\xi^{\eta}\big)
% = \frac{\theta}{2\Delta t}\int_{\Omega} b_2^{\eta+1}\big(\Delta\xi^{\eta+1}\big)^2 - b_2^{\eta-1}\big(\Delta\xi^{\eta-1}\big)^2~dy \\
%	 		&\qquad- \theta\big(b^{'}_2(\sigma_3)\Delta\xi^{\eta+1},\Delta\xi^{\eta-1}\big) +
% 	 		\frac{(1-2\theta)}{2\Delta t} ~\!b_2^\eta\big[\big(\Delta\xi^{\eta+1},\Delta\xi^{\eta}\big) - \big(\Delta\xi^{\eta},\Delta\xi^{\eta-1}\big)\big], \\			
%			&\big(\big[b_2\Delta\xi\big]^{\eta+\theta},\Delta\delta\xi^{\eta}\big)
% = \frac{\theta}{2\Delta t}\int_{\Omega} b_2^{\eta+1}\big(\Delta\xi^{\eta+1}\big)^2 - b_2^{\eta-1}\big(\Delta\xi^{\eta-1}\big)^2~dy \\
%	 		&\qquad- \theta\big(b^{'}_2(\sigma_3)\Delta\xi^{\eta+1},\Delta\xi^{\eta-1}\big) +
% 	 		\frac{(1-2\theta)}{2\Delta t} ~\!b_2^\eta\big[\big(\Delta\xi^{\eta+1},\Delta\xi^{\eta}\big) - \big(\Delta\xi^{\eta},\Delta\xi^{\eta-1}\big)\big], 		
		\end{aligned}
	\end{equation}
	for some $\sigma_{k1} \in (t_{\eta-1},t_{\eta+1})$, varying $k\in\{1,\cdots,3\}$. The terms with the derivative in time were obtained from the application of the Theorem Fundamental of Calculus and the Theorem of the Mean Value as follow 
	\begin{equation}\label{explain:TFC_TVM}
		f^{\eta-1} - f^{\eta+1} = -\int_{t_{\eta-1}}^{t_{\eta+1}} f'(t)~dt = -f'(\sigma_{k1})\int_{t_{\eta-1}}^{t_{\eta+1}}\!dt = -2~\!\Delta t ~\!f'(\sigma_{k1}).
	\end{equation}
	
	Therefore by \eqref{estDisc:a*n_f}, we get that
	\begin{align*}
		\begin{aligned}
			&a^{(v_h,~\!\eta+\theta)}(\xi,\delta\xi^\eta) = \sum_{k=1}^3\bigg\{\frac{\theta}{2\Delta t}\int_{\Omega}f_k^{\eta+1}\big(D_k\xi^{\eta+1}\big)^2 - f_k^{\eta-1}\big(D_k\xi^{\eta-1}\big)^2~dy \\
			&\quad- \theta\big(f_k'(\sigma_{k1})D_k\xi^{\eta+1},D_k\xi^{\eta-1}\big) + \frac{(1-2\theta)}{2\Delta t}\big[\big(f_k^\eta D_k\xi^{\eta+1},D_k\xi^{\eta}\big) - \big(f_k^\eta D_k\xi^{\eta},D_k\xi^{\eta-1}\big)\big]\bigg\}.
%			\frac{\theta}{2\Delta t}\int_{\Omega}a^{(1,\eta+1)}_i\big(\nabla_{y_i}\xi^{\eta+1}\big)^2 - a^{(1,\eta-1)}_i\big(\nabla_{y_i}\xi^{\eta-1}\big)^2~dy \\
%			&\quad- \theta\big(a'_i~\!\!^{(1)}(y,\sigma_1)\nabla_{y_i}\xi^{\eta+1},\nabla_{y_i}\xi^{\eta-1}\big) + \frac{(1-2\theta)}{2\Delta t}\Big[\big(a_i^{(1,\eta)}\nabla_{y_i}\xi^{\eta+1},\nabla_{y_i}\xi^{\eta}\big)\\
%			&\quad- \big(a_i^{(1,\eta)}\nabla_{y_i}\xi^{\eta},\nabla_{y_i}\xi^{\eta-1}\big)\Big] + \frac{\theta}{2\Delta t}\int_{\Omega} b_1^{\eta+1}\vert v_h^{\eta+1}\vert_0^2\big(\nabla\xi^{\eta+1}\big)^2 - b_1^{\eta-1}\vert v_h^{\eta-1}\vert_0^2\big(\nabla\xi^{\eta-1}\big)^2~dy \\
%	 		&\quad- \theta\Big(\frac{d}{dt}(b_1\vert v_h\vert_0^2)(\sigma_2)\nabla\xi^{\eta+1},\nabla\xi^{\eta-1}\Big) +
% 	 		\frac{(1-2\theta)}{2\Delta t} b_1^\eta\vert v_h^\eta\vert_0^2\Big[\big(\nabla\xi^{\eta+1},\nabla\xi^{\eta}\big)
%			- \big(\nabla\xi^{\eta},\nabla\xi^{\eta-1}\big)\Big] + \frac{\theta}{2\Delta t}\int_{\Omega} b_2^{\eta+1}\big(\Delta\xi^{\eta+1}\big)^2 - b_2^{\eta-1}\big(\Delta\xi^{\eta-1}\big)^2~dy \\
%	 		&\quad- \theta\big(b^{'}_2(\sigma_3)\Delta\xi^{\eta+1},\Delta\xi^{\eta-1}\big) +
% 	 		\frac{(1-2\theta)}{2\Delta t} b_2^\eta\Big[\big(\Delta\xi^{\eta+1},\Delta\xi^{\eta}\big)
%			- \big(\Delta\xi^{\eta},\Delta\xi^{\eta-1}\big)\Big].
		\end{aligned}		   	
	\end{align*}
	Summing of $\eta=1$ at $N_0$, we have
	\begin{equation}\label{estDisc34}
		\begin{aligned}			
			&\sum_{\eta=1}^{N_0-1}a^{(v_h,~\!\eta+\theta)}(\xi,\delta\xi^\eta) = \frac{1}{2\Delta t}\sum_{k=1}^3\Big\{\theta\int_{\Omega}f_k^{N_0}\big(D_k\xi^{N_0}\big)^2 + f_k^{N_0-1}\big(D_k\xi^{N_0-1}\big)^2~dy \\
			&\quad + (1-2\theta)\big(f_k^{N_0-1} D_k\xi^{N_0},D_k\xi^{N_0-1}\big)- \theta\int_\Omega f_k^{1}\big(D_k\xi^{1}\big)^2 + f_k^{0}\big(D_k\xi^{0}\big)^2~dy \\
			&\quad- (1-2\theta)\big(f_k^1 D_k\xi^{1},D_k\xi^{0}\big) - \Delta t\sum_{\eta=1}^{N_0-1} \big[2~\!\theta\big(f_k'(\sigma_{k1})D_k\xi^{\eta+1},D_k\xi^{\eta-1}\big) \\
			&\quad- \Delta t~\!(1-2~\!\theta)\big(f_k'(\sigma_{k2})D_k\xi^\eta,D_k\xi^{\eta-1}\big)\big]\Big\},
		\end{aligned}		
	\end{equation}
	for some $\sigma_{k2}\in (t_{\eta-1},t_\eta)$, varying $k\in\{1,\cdots,3\}$.

	So, by analysis \eqref{estDisc1},~\eqref{estDisc2} and \eqref{estDisc34} we obtain
	\begin{equation}\label{estDisc:Gamma1_xi}
		\begin{aligned}	
			&\sum_{\eta=1}^{N_0-1}\Gamma_1^{\eta}(\xi,v_h) \geq \frac{1}{2\Delta t}\Big[\Theta_1^{N_0,N_0-1}-~\!\Theta_1^{1,1} - \Delta t\sum_{\eta=1}^{N_0-1}\Theta_2^\eta\Big],
%			&\frac{\theta}{2\Delta t}\int_{\Omega}a^{(1,\eta+1)}_i\big(\nabla_{y_i}\xi^{\eta+1}\big)^2 - a^{(1,\eta-1)}_i\big(\nabla_{y_i}\xi^{\eta-1}\big)^2~dy \\
%				&- \theta\big(a'_i~\!\!^{(1)}(y,\sigma_1)\nabla_{y_i}\xi^{\eta+1},\nabla_{y_i}\xi^{\eta-1}\big) + \frac{(1-2\theta)}{2\Delta t}\Big[\big(a_i^{(1,\eta)}\nabla_{y_i}\xi^{\eta+1},\nabla_{y_i}\xi^{\eta}\big)\\
%				&- \big(a_i^{(1,\eta)}\nabla_{y_i}\xi^{\eta},\nabla_{y_i}\xi^{\eta-1}\big)\Big]+	\frac{\theta}{2\Delta t}\int_{\Omega} b_2^{\eta+1}\big(\Delta\xi^{\eta+1}\big)^2 - b_2^{\eta-1}\big(\Delta\xi^{\eta-1}\big)^2~dy \\
%	 		&- \theta\big(b^{'}_2(\sigma_3)\Delta\xi^{\eta+1},\Delta\xi^{\eta-1}\big) +
% 	 		\frac{(1-2\theta)}{2\Delta t} b_2^\eta\Big[\big(\Delta\xi^{\eta+1},\Delta\xi^{\eta}\big)
%			- \big(\Delta\xi^{\eta},\Delta\xi^{\eta-1}\big)\Big]
		\end{aligned}		   	
	\end{equation}
	where
	\begin{align*}
		\Theta_1^{\eta,\lambda} &= \vert\delta\xi^{\eta-1/2}\vert^2_0 + \sum_{k=1}^3\Big[\theta\!\int_{\Omega}f_k^{\eta}\big(D_k\xi^{\eta}\big)^2 + f_k^{\eta-1}\big(D_k\xi^{\eta-1}\big)^2~dy + (1-2\theta)\big(f_k^{\lambda} D_k\xi^{\eta},D_k\xi^{\eta-1}\big)\Big],\\
		\Theta_2^\eta &= \sum_{k=1}^3\big[2~\!\theta\big(f_k'(\sigma_{k1})D_k\xi^{\eta+1},D_k\xi^{\eta-1}\big) + (1-2~\!\theta)\big(f_k'(\sigma_{k2})D_k\xi^\eta,D_k\xi^{\eta-1}\big)\big].
	\end{align*}
	
	Now, analysing each term of $\Gamma_2^\eta(\xi)$, give by
	\begin{align*}
		\Gamma_2^\eta(\xi) = \big(\big[a^{(2)}_{ij}\nabla_{y_i,y_j}\xi\big]^{\eta+\theta},\delta\xi^\eta\big) + \big(\big[a^{(5)}_i\nabla_{y_i}\xi\big]^{\eta+\theta},\delta\xi^\eta\big),
	\end{align*}
	we have, using the inequality of Cauchy-Schwarz and Young, the definition of $\vert\nabla\xi^\eta\vert_0$, increasing the norm $\big[\vert\nabla\xi\vert_0^2\big]^{\eta+\theta}$ and consider $\theta\leq 1$, follows that
	\begin{align*}
		\Gamma_2^\eta(\xi) \leq \frac{\kappa_3}{2}\big[\varepsilon\big(\bar{E}^{\eta+1} + \bar{E}^{\eta} + \bar{E}^{\eta-1}\big) + \frac{1}{\varepsilon}\vert\delta\xi^\eta\vert_0^2\big],
	\end{align*}
	for some $\varepsilon>0$ and for
	\begin{align*}	
		\bar{E}~\!^\eta = \vert\nabla_{y_i}\xi^\eta\vert_0^2 + \vert\nabla\xi^\eta\vert_0^2 + \vert\Delta\xi^\eta\vert_0^2~.
	\end{align*}
	Summing of $\eta=1$	 at $N_0-1$, we get
	\begin{equation}\label{estDisc:Gamma2_xi}
		\sum_{\eta=1}^{N_0-1}\Gamma_2^\eta(\xi) \leq \frac{\kappa_3}{2}~\!\varepsilon~\!\bar{E}^{N_0} + \frac{\kappa_3}{2}\sum_{\eta=1}^{N_0-1}\big[\varepsilon~\!\bar{E}^{\eta} + \frac{1}{\varepsilon}\vert\delta\xi^\eta\vert_0^2\big]
	\end{equation}
	
	\noindent\textbf{Analisys: Terms dependent on $\rho$}	
		
	By definition, we have
	\begin{align*}
		&\Gamma_1^\eta(\rho,v) + \Gamma_2^\eta(\rho) = \big(\delta^2\rho^\eta,\delta\xi^\eta\big) + a^{(v,~\!\eta+\theta)}(\rho,\delta\xi^\eta) + \nu~\!\big(\delta\rho^\eta,\delta\xi^\eta\big) +  \big(a^{(4,\eta)}_i(t)\nabla_{y_i}\delta\rho^\eta,\delta\xi^\eta\big) \\
		&\qquad+ \big(\big[a^{(2)}_{ij}\nabla_{y_i,y_j}\rho\big]^{\eta+\theta},\delta\xi^\eta\big) + \big(\big[a^{(5)}_i\nabla_{y_i}\rho\big]^{\eta+\theta},\delta\xi^\eta\big).
	\end{align*}	
	Analysing the terms of $\Gamma_1^\eta(\rho,v)$ and $\Gamma_2^\eta(\rho)$ applying the inequality of Cauchy-Schwarz and Young, and considering $\theta\leq 1$, follows that
	\begin{align*}		
		\Gamma_1^\eta(\rho,v) + \Gamma_2^\eta(\rho) &\leq \frac{\kappa_4}{2}\Big\{\frac{1}{\varepsilon}\big[\vert\delta^2\rho^\eta\vert_0^2 + \vert\delta\rho^\eta\vert_0^2 + \vert\nabla\delta\rho^\eta\vert_0^2 + \big(\vert\nabla\rho\vert_0^2 + \vert\Delta\rho\vert_0^2\big)^{\eta+1}\\
		&\qquad + \big(\vert\nabla\rho\vert_0^2 + \vert\Delta\rho\vert_0^2\big)^\eta + \big(\vert\nabla\rho\vert_0^2 + \vert\Delta\rho\vert_0^2\big)^{\eta-1}\big] + \varepsilon~\!\vert\delta\xi^\eta\vert_0^2\Big\}\\
		&\leq \frac{\kappa_4}{2}\Big[\frac{1}{\varepsilon}\big(\vert\delta^2\rho^\eta\vert_0^2 + \Vert\delta\rho^\eta\Vert_1^2 + \Vert\rho^{\eta+1}\Vert_2^2 + \Vert\rho^\eta\Vert_2^2 + \Vert\rho^{\eta-1}\Vert_2^2\big) + \varepsilon~\!\vert\delta\xi^\eta\vert_0^2\Big].
	\end{align*}			
	This is similar to \eqref{estCont_Upsilon_rho}. Summing of $\eta=1$ at $N_0$ we get
	\begin{equation}\label{estDisc:Gamma_rho}
		\sum_{\eta=1}^{N_0-1} \big[\Gamma_1^\eta(\rho,v) + \Gamma_2^\eta(\rho) \big] \leq \frac{\kappa_4}{2}\Big\{\frac{1}{\varepsilon}\sum_{\eta=0}^{N_0} \big[\vert\delta^2\rho^\eta\vert_0^2 + \Vert\delta\rho^\eta\Vert_1^2 + \Vert\rho^{\eta}\Vert_2^2\big] + \varepsilon\sum_{\eta=1}^{N_0-1}\vert\delta\xi^\eta\vert_0^2\Big\}.
	\end{equation}
			
	We note that by definition of Ritz projection in \eqref{def:projecaoOrtogonal}, for each discrete time $t=t_{\eta-1},~t_\eta,~t_{\eta+1}$, we get $a^{(v,\eta)}(\rho,\delta\xi^\eta) = a^{(v,\eta)}(v - R_h v,\delta\xi^\eta) = 0$ .\\
		
	\noindent\textbf{Analisys: Nonlinear term}
							
	The calculation procedure for the nonlinear term is analogous to the estimates \eqref{estCont_mu}, that is, for each of the discrete times $t=t_{\eta-1},~t_{\eta},~t_{\eta+1}$, we have		
	\begin{align*}
		\mu^\eta \leq \frac{\kappa_5}{2}\Big[\frac{1}{\varepsilon}\big(\vert\nabla\rho^{\eta+1}\vert_0^2 + \vert\nabla\rho^\eta\vert_0^2 + \vert\nabla\rho^{\eta-1}\vert_0^2\big) + \varepsilon\big(\vert\nabla\xi^{\eta+1}\vert_0^2 + \vert\nabla\xi^{\eta}\vert_0^2 + \vert\nabla\xi^{\eta-1}\vert_0^2 + \vert\delta\xi^{\eta}\vert_0^2\big)\Big].
	\end{align*}
	Summing of $\eta=1$ at $N_0-1$, we get
	\begin{equation}\label{estDisc:nonlinear}
 		\sum_{\eta=1}^{N_0-1}\mu^\eta \leq \frac{\kappa_5}{2}\Big\{\varepsilon~\!\vert\nabla\xi^{N_0}\vert_0^2 + \frac{1}{\varepsilon}\sum_{\eta=0}^{N_0}\big[\Vert\rho^\eta\Vert_1^2\big] + \varepsilon\sum_{\eta=1}^{N_0-1}\big[\vert\nabla\xi^\eta\vert_0^2 + \vert\delta\xi^\eta\vert_0^2\big]\Big\}.
	\end{equation}
	
	\noindent\textbf{Analysis: Term of finite differences}		
	
	By definition, from \eqref{def:formaBilinearDisc}$_2$, we have
	\begin{equation}\nonumber
		\psi^{\eta} = \delta^2 v^\eta - v_{tt}^{\eta+\theta} + \nu~\!\big(\delta v^\eta - v_t^{\eta+\theta}\big) + \Big(a^{(4,\eta)}_i\nabla_{y_i} \delta v^\eta - \big[a^{(4)}_i\nabla_{y_i}v_t\big]^{\eta+\theta}\Big).
	\end{equation}	
	Using integration by parts we can obtain
	\begin{align*}
		\psi^{\eta} &= \frac{1}{(\Delta t)^2}\bigg[\int_{t_{\eta-1}}^{t_{\eta}}v^{(iv)}(\cdot,s)\frac{(s-t_{\eta-1})^3}{3!} ~ds + \int_{t_{\eta}}^{t_{\eta+1}}v^{(iv)}(\cdot,s)\frac{(t_{\eta+1}-s)^3}{3!} ~ds\bigg] \\
		&\quad+ \frac{1}{2\Delta t}\bigg[\int_{t_{\eta-1}}^{t_{\eta}}v'''(\cdot,s)\frac{(s-t_{\eta-1})^2}{2} ~ds + \int_{t_{\eta}}^{t_{\eta+1}}v'''(\cdot,s)\frac{(t_{\eta+1}-s)^2}{2} ~ds\bigg] \\
		&\quad+ \frac{a_i^{(4,\eta)}(y)}{2\Delta t}\bigg[\int_{t_{\eta-1}}^{t_{\eta}}\nabla_{y_i} v'''(\cdot,s)\frac{(s-t_{\eta-1})^2}{2} ~ds + \int_{t_{\eta}}^{t_{\eta+1}}\nabla_{y_i} v'''(\cdot,s)\frac{(t_{\eta+1}-s)^2}{2} ~ds\bigg] \\
		&\quad+ \theta\bigg[\int_{t_{\eta-1}}^{t_{\eta}}\big(v^{(iv)}-v''' + (a_i^{(4)}\nabla_{y_i} v')''\big)(\cdot,s)(s-t_{\eta-1}) ~ds \\
		&\quad\qquad+ \int_{t_{\eta}}^{t_{\eta+1}}\big(v^{(iv)}-v'''+ (a_i^{(4)}\nabla_{y_i} v')''\big)(\cdot,s)(t_{\eta+1}-s) ~ds\bigg].
	\end{align*}			
	
	Considering that $0 \leq (s-t_{\eta-1}) \leq \Delta t$, for all $s\in[t_{\eta-1},t_{\eta}]$ and that $0 \leq (t_{\eta+1}-s) \leq \Delta t$, for all $s\in[t_{\eta},t_{\eta+1}]$, applying the supreme essential in the derivatives of $v$ and then resolving the integrals, considering $\theta\leq 1$, we get
	\begin{equation}\label{estDisc:psi_1}
		\psi^{\eta} \leq \frac{\kappa_{6}}{2}~\!(\Delta t)^2\Big\{\sum_{k=1}^{3}\Big\Vert\frac{d^{k} \nabla v}{dt^{k}}\Big\Vert_{L^\infty(t_{\eta-1},t_{\eta+1})} + \sum_{k=3}^4 \Big\Vert\frac{d^{k} v}{dt^{k}}\Big\Vert_{L^\infty(t_{\eta-1},t_{\eta+1})}\Big\},
%		&\big[\vert\vert v^{(iv)}(y)\vert\vert_{L^\infty(t_{\eta-1},t_{\eta+1})} + \vert\vert v'''(y)\vert\vert_{L^\infty(t_{\eta-1},t_{\eta+1})} + \vert\vert v_y'''(y)\vert\vert_{L^\infty(t_{\eta-1},t_{\eta+1})}\\
%		&~+ \vert\vert v_y''(y)\vert\vert_{L^\infty(t_{\eta-1},t_{\eta+1})} + \vert\vert v_y'(y)\vert\vert_{L^\infty(t_{\eta-1},t_{\eta+1})}\big],
	\end{equation}					
	where $\kappa_{6} = 2~\!\kappa_5 + \ds\max_{i\in\{1,\cdots,n\}}\Big\{\max_{t\in[0,T]}\Big[2~\!\sup_{y\in\Omega} \big(|a_i''~\!^{(1)}(y,t)| + |a_i'~\!^{(1)}(y,t)|\big)\Big]\Big\}$. Once that
	\begin{align*}
		(a_i^{(4)}\nabla_{y_i}v')'' &= a_i''~\!^{(4)}\nabla_{y_i}v' + 2~\!a_i'~\!^{(4)}\nabla_{y_i}v'' + a_i^{(4)}\nabla_{y_i}v''' \leq \frac{\kappa_6}{2}\sum_{k=1}^{3}\Big\Vert\frac{d^{k}\nabla v}{dt^{k}}\Big\Vert_{L^\infty(t_{\eta-1},t_{\eta+1})}.
	\end{align*}
	
	Thus, by Cauchy-Schwarz, Young and \eqref{estDisc:psi_1}, we have that
	\begin{align*}
		\big(\psi^\eta,\delta\xi^\eta\big) &\leq \frac{1}{2}\Big[\frac{1}{\varepsilon}\vert\psi^\eta\vert_0^2 + \varepsilon~\!\vert\delta\xi^\eta\vert_0^2\Big] \\
		&\leq \frac{(\kappa_6)^2\!\!}{4~\!\varepsilon}~(\Delta t)^4\Big[\sum_{k=1}^{3}\Big\Vert\frac{d^{k} v}{dt^{k}}\Big\Vert^2_{L^\infty(t_{\eta-1},t_{\eta+1};H^1(\Omega))} + \Vert v^{(iv)}\Vert^2_{L^\infty(t_{\eta-1},t_{\eta+1};L^2(\Omega))}\Big] + \frac{\varepsilon}{2}\vert\delta\xi^\eta\vert_0^2.
	\end{align*}
	Summing of $\eta=1$ at $N_0-1$, we get
	\begin{equation}\label{estDisc:psi}
		\sum_{\eta=1}^{N_0-1} \big(\psi^\eta,\delta\xi^\eta\big) \leq \frac{(\kappa_6)^2\!\!}{4~\!\varepsilon}~\!(\Delta t)^4\Big[\sum_{k=1}^{3}\Big\Vert\frac{d^{k} v}{dt^{k}}\Big\Vert^2_{L^\infty(0,T;H^1(\Omega))} + \Vert v^{(iv)}\Vert^2_{L^\infty(0,T;L^2(\Omega))}\Big] + \frac{\varepsilon}{2}\sum_{\eta=1}^{N_0-1}\vert\delta\xi^\eta\vert_0^2
	\end{equation}
	
	\noindent\textbf{Final estimate}

	Now, passing the term $\Gamma_2^\eta(\xi)$ to the right side of \eqref{estDisc:somaSubProj}, summing of $\eta=1$ at $N_0-1$ and using the estimates \eqref{estDisc:Gamma1_xi} -- \eqref{estDisc:psi}, we obtain	
	\begin{equation}\label{estDisc:final1}
		\frac{1}{2\Delta t}\Big[\Theta_1^{N_0,N_0-1}-\Theta_1^{1,1} - \Delta t\sum_{\eta=1}^{N_0-1}\Theta_2^\eta\Big] \leq \frac{\kappa_7}{2}~\!\Theta_3^\eta
	\end{equation}
	where $\kappa_7 = \kappa_6(1 + \kappa_6/2)~$ and
	\begin{equation}\label{def:Theta3}
		\begin{aligned}		
			&\Theta_3^\eta = \varepsilon~\!\bar{E}^{N_0} + \frac{1}{\varepsilon}\sum_{\eta=0}^{N_0}\big[\vert\delta^2\rho^\eta\vert_0^2 + \Vert\delta\rho^\eta\Vert_1^2 + \Vert\rho^{\eta}\Vert_2^2\big] + \Big(\varepsilon + \frac{1}{\varepsilon}\Big)\sum_{\eta=1}^{N_0-1}\big[\bar{E}^\eta + \vert\delta\xi^\eta\vert_0^2\big]\\
			&\qquad+ \frac{(\Delta t)^4}{\varepsilon}\Big[\sum_{k=1}^{3}\Big\Vert\frac{d^{k} v}{dt^{k}}\Big\Vert^2_{L^\infty(0,T;H^1(\Omega))} + \Vert v^{(iv)}\Vert^2_{L^\infty(0,T;L^2(\Omega))}\Big].
		\end{aligned}
	\end{equation}
	
	We need to analyze the term $\Theta_2^\eta$ before proceeding. We note that, by Cauchy-Schwarz and Young, considering $\theta\leq 1$:
	\begin{align*}
		\Theta_2^\eta &= \sum_{k=1}^3\big[2~\!\theta\big(f_k'(\sigma_{k1})D_k\xi^{\eta+1},D_k\xi^{\eta-1}\big) + (1-2~\!\theta)\big(f_k'(\sigma_{k2})D_k\xi^\eta,D_k\xi^{\eta-1}\big)\big] \\
					  &\leq  2~\!\kappa_2\sum_{k=1}^3\big[\varepsilon\big(\vert D_k\xi^{\eta+1}\vert_0^2 + \vert D_k\xi^{\eta}\vert_0^2\big) + \frac{1}{\varepsilon}\vert D_k\xi^{\eta-1}\vert_0^2\big] = 2~\!\kappa_2\big[\varepsilon\big(\bar{E}^{\eta+1} + \bar{E}^{\eta}\big) + \frac{1}{\varepsilon}\bar{E}^{\eta-1}\big].
	\end{align*}	
	Therefore,
	\begin{equation}\label{estDisc:Theta2}
		\Delta t \sum_{\eta=1}^{N_0-1}\Theta_2^\eta \leq \kappa_7~\!\varepsilon~\!\Delta t~\!\bar{E}^{N_0} + \kappa_8(\varepsilon)~\!\Delta t\sum_{\eta=0}^{N_0-1}\bar{E}^\eta \leq \kappa_7~\!\varepsilon~\!\bar{E}^{N_0} + \kappa_8(\varepsilon)~\!\Delta t\sum_{\eta=0}^{N_0-1}\bar{E}^\eta,
	\end{equation}
	where $\kappa_8(\varepsilon) = \kappa_7(2 + \varepsilon + 1/\varepsilon)$ and in the last inequality we consider $\Delta t\leq 1$ for the first parcel.
	
	On the other hand, for the term $\Theta_1^{1,1}$, considering $\theta\leq 1$ and applying Cauchy-Schwarz and Young, we get
	\begin{equation}\label{estDisc:initialNorms1}
		\begin{aligned}		
			\Theta_1^{1,1} &= |\delta\xi^{1/2}|^2_0 + \sum_{k=1}^3\Big[\theta\!\int_{\Omega}f_k^{1}\big(D_k\xi^{1}\big)^2 + f_k^{0}\big(D_k\xi^{0}\big)^2~dy + (1-2\theta)\big(f_k^{1} D_k\xi^{1},D_k\xi^{0}\big)\Big] \\
			&\leq \kappa_1~\!\Big[\vert\delta\xi^{1/2}\vert_0^2 + \sum_{k=1}^{3}\big[\vert D_k\xi^1\vert_0^2 + \vert D_k\xi^0\vert_0^2\big]\Big] \leq 2~\!\kappa_1~\!\big[\vert\delta\xi^{1/2}\vert_0^2 + \Vert \xi^1\Vert_2^2 + \Vert \xi^0\Vert_0^2\big].
		\end{aligned}
	\end{equation}
	In the last inequality we use that
	\begin{align*}
		\sum_{k=1}^{3}\big[\vert D_k\xi^\eta\vert_0^2\big] = \vert\nabla_{y_i}\xi^\eta\vert_0^2 + \vert \nabla\xi^\eta\vert_0^2 + \vert \Delta\xi^\eta\vert_0^2 \leq 2\big[\vert\xi^\eta\vert_0^2 + \vert \nabla\xi^\eta\vert_0^2 + \vert \Delta\xi^\eta\vert_0^2\big] = 2~\!\Vert\xi^\eta\Vert_2^2, ~\forall~\!\eta.
	\end{align*}

	Following the procedures of article \cite{chou3} and considering \eqref{estDisc:initialNorms1} and the hypothesis \eqref{hip:condInicialContinuo_H3} we get
	\begin{equation}\label{estDisc:initialNorms}
		\Theta_1^{1,1} \leq 2~\!\kappa_1\big[\vert\delta\xi^{1/2}\vert_0^2 + \Vert\xi^1\Vert_2^2 + \Vert\xi^0\Vert_2^2\big] \leq 2~\!\kappa_1\big[(\Delta t)^4 + h^4\big].
	\end{equation}		
	
	Now, analysing the $\rho$ norms of term $\Theta_3^\eta$, we note that, by the Lemma of C\'{e}a and Douglas-Dupont,
	\begin{align*}
		\Vert\rho^\eta\Vert_m \leq  \Vert v^\eta - \hat{v}^\eta\Vert_m \leq C~\!h^{4-m}\Vert v\Vert_{4},
	\end{align*}		
	were $\hat{v}$ is the interpolator of $v$ in $V_h$. So we have, considering $h^4 \leq h^3 \leq h^2$ and multiply by $\Delta t$:
	\begin{equation}\label{estDisc:normaRho}
		\begin{aligned}
			\Delta t\sum_{\eta=0}^{N^*} \Big[\vert\delta^2\rho^\eta\vert_0^2 + \Vert\delta\rho^\eta\Vert_1^2 + \Vert\rho^{\eta}\Vert_2^2\Big] &\leq \int_{0}^T\vert\rho''(s)\vert_0^2 + \Vert\rho'(s)\Vert_1^2 + \Vert\rho(s)\Vert_2^2~ds \\
			&\leq h^4\sum_{k=0}^2 \Big\Vert \frac{d^k~\!v}{d t^k}\Big\Vert_{L^2(0,T;H^4(\Omega))}^2.
		\end{aligned}	
	\end{equation}

	The hypothesis \eqref{hip:regularidades_v's_H2} ensures that the norms of $v$ and its derivatives in the time of \eqref{estDisc:normaRho}, and the norms of last parcels of $\Theta_3^\eta$ in \eqref{def:Theta3} are constants.	
	
	Multiply both side of \eqref{estDisc:final1} by $2~\!\Delta t$ and passing the terms $\Theta_1^{1,1}$ and the sum of $\Theta_2^\eta$ to the right side, and applying the estimates \eqref{estDisc:Theta2} -- \eqref{estDisc:normaRho} we have
	\begin{equation}\label{estDisc:final2}
		\Theta_1^{N_0,N_0-1} \leq \kappa_7~\!\varepsilon~\!\bar{E}^{N_0} + \kappa_9(\varepsilon)\Big\{\big[h^4 + (\Delta t)^4\big] + \sum_{\eta=1}^{N_0-1} \bar{E}^\eta + \vert\delta\xi^\eta\vert_0^2\Big\}.
	\end{equation}
	were $\kappa_9(\varepsilon)$ is the product between $\kappa_8(\varepsilon)$ and maximum of norms of $v$ and its derivatives in time.
	
	By definition, we have
	\begin{align*}
		\vert\delta\xi^\eta\vert_0 = \frac{1}{2}\vert\delta\xi^{\eta+1/2} + \delta\xi^{\eta-1/2}\vert_0 \leq \frac{1}{2}\big[\vert\delta\xi^{\eta+1/2}\vert_0 + \vert\delta\xi^{\eta-1/2}\vert_0\big].
	\end{align*}
	So, summing of $\eta=1$ at $N_0-1$, we get
	\begin{equation}\label{estDisc:deltaxi}
		\sum_{\eta=1}^{N_0-1}\vert\delta\xi^\eta\vert_0 \leq \frac{1}{2}\vert\delta\xi^{N_0-1/2}\vert_0^2 + \sum_{\eta=1}^{N_0-1}\vert\delta\xi^{\eta-1/2}\vert_0.
	\end{equation}
	Therefore substituing \eqref{estDisc:deltaxi} in \eqref{estDisc:final2}, we have
	\begin{equation}\label{estDisc:final3}
		\Theta_1^{N_0,N_0-1} \leq \kappa_7~\!\varepsilon~\!E^{N_0} + \kappa_9(\varepsilon)\Big\{\big[h^4 + (\Delta t)^4\big] + \sum_{\eta=1}^{N_0-1} E^\eta\Big\}.
	\end{equation}
	were
	\begin{align*}
		E^\eta = \vert\delta\xi^{\eta-1/2}\vert_0 + \vert\nabla_{y_i}\xi^\eta\vert_0^2 + \vert\nabla\xi^\eta\vert_0^2 + \vert\Delta\xi^\eta\vert_0^2.
	\end{align*}
	
	Finally, we will analyze the term $\Theta_1^{N_0,N_0-1}$. By definition \eqref{estDisc:Gamma1_xi} we have
	\begin{equation}\nonumber
		\begin{aligned}		
			&\Theta_1^{N_0,N_0-1} = \vert\delta\xi^{N_0-1/2}\vert^2_0 + \sum_{k=1}^3\Big[\theta\int_\Omega f_k^{N_0}\big(D_k\xi^{N_0}\big)^2 + f_k^{N_0-1}\big(D_k\xi^{N_0-1}\big)^2~dy\\
			&\quad+ (1-2\theta)\big(f_k^{N_0-1} D_k\xi^{N_0},D_k\xi^{N_0-1}\big)\Big].
		\end{aligned}	
	\end{equation}
%	with $\kappa_0$ as in \eqref{formaBilinear:coerciva}.
%	\begin{align}		
%		\nonumber
%		&\Theta_1^{N_0,N_0-1} = \vert\delta\xi^{N_0-1/2}\vert^2_0 + \theta\!\int_{\Omega}a_i^{(1,N_0)}(y)\big(\nabla_{y_i}\xi^{N_0}\big)^2 + a_i^{(1,N_0-1)}(y)\big(\nabla_{y_i}\xi^{N_0-1}\big)^2~dy \\
%		\label{estDisc:def_Theta1}
%		&\quad+ (1-2\theta)\big(a_i^{(1,N_0-1)}(y) \nabla_{y_i}\xi^{N_0},\nabla_{y_i}\xi^{N_0-1}\big) + \theta\!\int_{\Omega}b_2^{N_0}\big(\Delta\xi^{N_0}\big)^2 + b_2^{N_0-1}\big(\Delta\xi^{N_0-1}\big)^2~dy\\
%		\nonumber
%		&\quad+ (1-2\theta)\big(b_2^{N_0-1}\Delta\xi^{N_0},\Delta\xi^{N_0-1}\big) + \theta\!\int_{\Omega}b_1^{N_0}\vert v_h^{N_0}\vert_0^2\big(\nabla\xi^{N_0}\big)^2 + b_1^{N_0-1}\vert v_h^{N_0-1}\vert_0^2\big(\nabla\xi^{N_0-1}\big)^2dy\\
%		\nonumber
%		&\quad+ (1-2\theta)\big(b_1^{N_0-1}\vert v_h^{N_0-1}\vert_0^2\nabla\xi^{N_0},\nabla\xi^{N_0-1}\big).
%	\end{align}	
	Adding and subtracting the terms $\theta~\!f_k^{N_0}\big(D_k\xi^{N_0-1}\big)^2$ and $(1-2\theta)\big(f_k^{N_0} D_k\xi^{N_0},D_k\xi^{N_0-1}\big)$, we get
	\begin{equation}\label{estDisc:def_Theta1}
		\begin{aligned}		
			\Theta_1^{N_0,N_0-1} &= \vert\delta\xi^{N_0-1/2}\vert^2_0 + \sum_{k=1}^3\Big[\theta\int_\Omega f_k^{N_0}\big[\big(D_k\xi^{N_0}\big)^2 + \big(D_k\xi^{N_0-1}\big)^2\big]~dy\\
			&\quad+ (1-2\theta)\big(f_k^{N_0} D_k\xi^{N_0},D_k\xi^{N_0-1}\big) - \Theta_4^{N_0,k}\Big]\\
			&= \vert\delta\xi^{N_0-1/2}\vert^2_0 + \sum_{k=1}^3\Big[\theta\int_\Omega f_k^{N_0}\big(D_k\xi^{N_0} - D_k\xi^{N_0-1}\big)^2~dy\\
			&\quad+ \big(f_k^{N_0} D_k\xi^{N_0},D_k\xi^{N_0-1}\big) - \Theta_4^{N_0,k}\Big],
		\end{aligned}	
	\end{equation}
	were, by application of procedures in \eqref{explain:TFC_TVM}, we obtain to follow the equality and the estimate, applying Cauchy-Schwarz and Young, considering $\theta,\Delta t \leq 1$ :
	\begin{align*}
		\Theta_4^{N_0,k} &= \Delta t\Big[\theta\int_\Omega f'_k(\sigma_{k3})\big(D_k\xi^{N_0-1}\big)^2~dy + (1-2~\!\theta)\big(f'_k(\sigma_{k3})D_k\xi^{N_0},D_k\xi^{N_0-1}\big)\Big]\\
		&\leq \kappa_2~\!\vert D_k\xi^{N_0}\vert_0^2 + 2~\!\kappa_2~\!\Delta t\vert D_k\xi^{N_0-1}\vert_0^2.
	\end{align*}		
	for some $\sigma_{k3}\in~\!(t_{N_0-1},t_{N_0})$.

%	From now onward we will divide in two cases.\\
%	
%	\noindent\textbf{Case i: $\theta = 1/4$}
%	
%	By polar identity, we have to
%	\begin{equation}\label{estDisc:id_polar_caso1}
%		\begin{aligned}
%			\big(f_k^{N_0-1}D_k\xi^{N_0},D_k\xi^{N_0-1}\big) &= \frac{1}{2}\int_\Omega f_k^{N_0-1}\big[(D_k\xi^{N_0} + D_k\xi^{N_0-1})^2 - (D_k\xi^{N_0})^2 - (D_k\xi^{N_0-1})^2\big]dy\\
%			&\geq -\frac{1}{2}\int_\Omega f_k^{N_0-1}\big[\big(D_k\xi^{N_0}\big)^2 + \big(D_k\xi^{N_0-1}\big)^2\big]~dy.
%		\end{aligned}
%	\end{equation}
%	Now, how in this case $\theta \geq 1/4$, applying \eqref{estDisc:id_polar_caso1} in \eqref{estDisc:def_Theta1} we have
%	\begin{equation}\label{estDisc:Theta1_caso1}
%		\Theta_1^{N_0,N_0-1} \geq \vert\delta\xi^{N_0-1/2}\vert^2_0 + \sum_{k=1}^3\Big[\int_\Omega (\theta~\!f_k^{N_0} + \Big(\theta-\frac{1}{2}\Big)f_k^{N_0-1})\big( D_k\xi^{N_0}\big)^2 + f_k^{N_0-1}(3~\!\theta-1)\big( D_k\xi^{N_0-1}\big)^2 dy\Big].
%	\end{equation}\\
%	
%	\noindent\textbf{Case ii: $\theta\in~\!]1/4,1]$}
		
	Now, by the polar identity, we have to
	\begin{equation}\label{estDisc:id_polar_caso2}
		\big(f_k^{N_0}D_k\xi^{N_0},D_k\xi^{N_0-1}\big) = \frac{1}{4}\int_\Omega f_k^{N_0}\big[(D_k\xi^{N_0} + D_k\xi^{N_0-1})^2 - (D_k\xi^{N_0} - D_k\xi^{N_0-1})^2\big]dy.
	\end{equation}
	So, replacing \eqref{estDisc:id_polar_caso2} in \eqref{estDisc:def_Theta1} we have
	\begin{equation}\nonumber
		\begin{aligned}
			\Theta_1^{N_0,N_0-1} &\geq \vert\delta\xi^{N_0-1/2}\vert^2_0 + \sum_{k=1}^3\Big[\int_\Omega f_k^{N_0}\Big[\Big(\theta - \frac{1}{4}\Big)\big(D_k\xi^{N_0} - D_k\xi^{N_0-1}\big)^2 \\
			&\quad+ \frac{1}{4}\big(D_k\xi^{N_0} + D_k\xi^{N_0-1}\big)^2 dy - \Theta_4^{N_0,k}\Big].
		\end{aligned}	
	\end{equation}	
	
	Taking $\theta\in~\!]1/4,1]$, we get
	\begin{align}
		\nonumber
		\Theta_1^{N_0,N_0-1} &\geq \vert\delta\xi^{N_0-1/2}\vert^2_0 + \sum_{k=1}^3\big[~\!\kappa_{10}\big(\big\vert D_k\xi^{N_0} - D_k\xi^{N_0-1}\big\vert_0^2 + \big\vert D_k\xi^{N_0} + D_k\xi^{N_0-1}\big\vert_0^2\big) - \Theta_4^{N_0,k}~\!\big]\Big\}\\\label{estDisc:Theta1_caso2}
		&> \kappa_{10}~\!E^{N_0} - \sum_{k=1}^3\big[\Theta_4^{N_0,k}~\!\big],
	\end{align}	
	were $\kappa_{10} = \min\{1,~\kappa_0(\theta-1/4),~\kappa_0/4\} > 0$ and once that 
	\begin{align*}
		\vert D_k\xi^{N_0} - D_k\xi^{N_0-1}\big\vert_0^2 + \big\vert D_k\xi^{N_0} + D_k\xi^{N_0-1}\big\vert_0^2 = 2\big(\vert D_k\xi^{N_0}\vert_0^2 + \vert D_k\xi^{N_0-1}\vert_0^2\big) > \vert D_k\xi^{N_0}\vert_0^2.
	\end{align*}
	
	Substituing \eqref{estDisc:Theta1_caso2} in \eqref{estDisc:final3} and passing the term $\Theta_4^{N_0,k}$ to the right side, we get
	\begin{equation}\label{estDisc:final4}
		(\kappa_{10}-\kappa_7~\!\varepsilon)~\!E^{N_0} \leq \kappa_9(\varepsilon)\Big\{\big[h^4 + (\Delta t)^4\big] + \sum_{\eta=1}^{N_0-1} E^\eta\Big\},~\forall~\!N_0\in\{1,\cdots,N\}.
	\end{equation}
			
	Taking $\varepsilon= (\kappa_{10}/2~\!\kappa_{7}) > 0$, applying the  discrete  Gronwall inequality  and  the equivalence of norms, we have
	\begin{equation}\label{estDisc:Lees}
		\vert\delta\xi^{\eta-1/2}\vert_0^2 + \Vert\xi^{\eta}\Vert_2^2 \leq E^{\eta} \leq C\big[(\Delta t)^4 + h^4\big],~\forall~\eta=1,\cdots,N.
	\end{equation}
	where $\displaystyle C = ({\kappa_{9}}/{\kappa_{10}})\exp\big(({\kappa_{9}}/{\kappa_{10}})T\big)$.

	From \eqref{estDisc:Lees}, using the decomposition of error, the Douglas-Dupont Lemma and taking the  essential supreme in time, we conclude the theorem, i.e.
	\begin{align*}
		\Vert\delta e\Vert_{L^\infty(0,T;L^2(\Omega))} + \Vert e\Vert_{L^\infty(0,T;H_0^2(\Omega))} \leq C~\!\big[h^2 + (\Delta t)^2\big],
	\end{align*}			
	Then, we obtain that the order of convergence of the numerical method is quadratic in time and space.	
\end{proof}

\section{Numerical Simulations}

	In this section, the accuracy of the discrete methods \eqref{newton_n>0} is  tested  by comparing the  approximate numerical solutions with the exact solutions and computing the corresponding approximation error in the norm $L^{\infty}(0,T;L^{2}(0,L))$, for one-dimensional and two-dimensional cases. The results of two numerical examples are presented below.

%\subsection{Valida\c{c}\~{a}o}	

	In order to compare the approximate and exact solutions, we consider the inhomogeneous versions of equations \eqref{sistema_n>0} and \eqref{sistema_n=0} by introducing the  functions $f(y,t)$, see \eqref{def:MatVet},  in their  right-hand side, respectively. For these new problems, we can construct exact solutions by appropriately choosing the right-hand sides functions.

	In our analysis, we use two solutions for one-dimensional and two-dimensional cases, and two examples of moving boundarys, as shown in Table \ref{tab:samples} and Table \ref{tab:boundaries}, respectively.
		\begin{table}[htbp]
		    \centering		    
			\begin{tabular}{lcc}
				\toprule
				\multicolumn{3}{c}{$v(y,t)$}\\								
				\midrule
				 & \multicolumn{1}{c}{$\mathbb{R}$} & \multicolumn{1}{c}{$\mathbb{R}^2$}\\
				\midrule
   			    S$_1$ & $10^{-1}(y^2-1)^2\cos(2\pi t)$ & $10^{-1}\big[(y_1^2-1)(y_2^2-1)\big]^2\cos(2\pi t)$ \\
				S$_2$ & $10^{-3}(y^2-1)^2\sin(2\pi t)$ & $10^{-7}\big[(y_1^2-1)(y_2^2-1)\big]^2\sin(2\pi t)$ \\
				\bottomrule
			\end{tabular}
			\caption{Examples of exact solutions $v(y,t)$}
			\label{tab:samples}
		\end{table}			
		\begin{table}[htbp]
		    \centering
			\begin{tabular}{lcc}
				\toprule
				\multicolumn{3}{c}{$K(t)$}\\								
				\midrule
					 & \multicolumn{1}{c}{$\mathbb{R}$} & \multicolumn{1}{c}{$\mathbb{R}^2$}\\
				\midrule
				  B$_1~$ & $64 + t/2^7$       & $64 + t/2^{17}$\\
				  B$_2~$ & $64 + 2(1-e^{-t})$ & $64 + (1-e^{-t})/2^{17}$\\
				\bottomrule
			\end{tabular}
			\caption{Examples of Moving boundaries $K(t)$}
			\label{tab:boundaries}
		\end{table}				
	We emphasize that the boundaries of the Table \ref{tab:boundaries}, satisfy the hypothesis \eqref{hip:dominio2D_H1}. The initial conditions immediately follow the exact solutions given.	
	
	The Theorem \eqref{teo:tempoDiscreto}, shows that the solution is unconditionally convergent with order of quadratic convergence in space and time for $\forall~\!\theta\in~\!]1/4, 1]$. However, we know from the literature that for  $\forall~\!\theta\in [0, 1/4[$, the system is conditionally convergent and we can see this fact in numerical simulations in one-dimensional case for the example S$_1$ and moving boundary B$_1$, see Table \ref{tab:tetaAnalisys}.
In addition, we can see that the error is inversely proportional to the parameter $\forall~\!\theta\in~\!]1/4, 1]$,  that is,  the smaller $\theta$ the minor error, as can be observed in the Table \ref{tab:tetaAnalisys}.
		\begin{table}[htbp]
			\centering
			\begin{tabular}{l ccccc}
				\toprule
				\multicolumn{6}{l}{$E_{L^{\infty}(0,T;L^2(\Omega))}$}\\
				\midrule
  	    $h\diagdown\theta$  & $0$        & $0.25$     & $0.5$      & $0.75$     & $1$        \\
				\midrule
					$2^{-1}$ & $5.495e-3$ & $5.463e-3$ & $5.434e-3$ & $5.406e-3$ & $5.379e-3$ \\
					$2^{-2}$ & $2.759e-3$ & $2.750e-3$ & $2.745e-3$ & $2.743e-3$ & $2.745e-3$ \\
					$2^{-3}$ & $6.404e-4$ & $6.417e-4$ & $6.572e-4$ & $6.859e-4$ & $7.264e-4$ \\
					$2^{-4}$ & $2.016e-4$ & $2.076e-4$ & $2.532e-4$ & $3.220e-4$ & $4.022e-4$ \\
					$2^{-5}$ & $4.266e-5$ & $5.507e-5$ & $1.510e-4$ & $2.472e-4$ & $3.434e-4$ \\
					$2^{-6}$ & diverge    & $5.380e-5$ & $1.498e-4$ & $2.460e-4$ & $3.422e-4$ \\					
				\bottomrule									
			\end{tabular}
			\caption {Error with $\Delta t = 2^{-7}$ fixed and varying $\theta$, for $\Omega\in\mathbb{R}$, example S$_1$ and boundary B$_1$}
			\label{tab:tetaAnalisys}
		\end{table}	

	Let's set, from now on, $\theta = 1/4$, for generating a minor error. In the Table \ref{tab:errorL2_deltat}, the parameter $h = 2^{-6}$ is fixed and the $\Delta t$ value are varying in $\Delta t =2^{-(i+1)}, ~i=1,\cdots,6$.
%into $I_1= \{0.1,~0.05,~0,025,~0,~0125,~0.00625\}$,
	\begin{table}[htbp]
		\centering
		\begin{tabular}{c cc c cc}
			\toprule
			\multicolumn{6}{l}{$E_{L^{\infty}(0,T;L^2(\Omega))}$}\\
			\midrule
							  & \multicolumn{2}{c}{$\mathbb{R}$} & & \multicolumn{2}{c}{$\mathbb{R}^2$} \\ \cmidrule(lr){2-3} \cmidrule(lr){5-6}
					 		  & B$_{1}$    & B$_{2}$     
					 		  & 
					 		  & B$_{1}$    & B$_{2}$    \\
			\midrule
						  	  & $9.651e-2$ & $9.647e-2$  %1D
						  	  &
						  	  & $8.711e-3$ & $8.711e-3$  %2D
						  	  \\
							  & $1.750e-2$ & $1.757e-2$  %1D
							  &
							  & $1.577e-3$ & $1.577e-3$  %2D
							  \\
					  S$_1$   & $3.173e-3$ & $3.200e-3$  %1D
						  	  &
						  	  & $2.853e-4$ & $2.853e-4$  %2D
						  	  \\
							  & $7.713e-4$ & $7.670e-4$  %1D
							  &
							  & $6.954e-5$ & $6.954e-5$  %2D
							  \\
							  & $2.063e-4$ & $2.051e-4$  %1D
							  &
							  & $1.857e-5$ & $1.857e-5$  %2D
							  \\
							  & $5.380e-5$ & $5.350e-5$  %1D
							  &
							  & $4.837e-6$ & $4.837e-6$  %2D
							  \\					  
			\midrule	
						  	  & $3.181e-4$ & $3.169e-4$  %1D
						  	  &
						  	  & $9.562e-2$ & $9.570e-2$  %2D
						  	  \\
							  & $7.296e-5$ & $7.273e-5$  %1D
							  &
							  & $2.385e-2$ & $2.386e-2$  %2D
							  \\
					  S$_2$   & $1.700e-5$ & $1.695e-5$  %1D
						  	  &
						  	  & $5.978e-3$ & $5.980e-3$  %2D
						  	  \\
							  & $4.072e-6$ & $4.060e-6$  %1D
							  &
							  & $1.515e-3$ & $1.514e-3$  %2D
							  \\
							  & $9.946e-7$ & $9.917e-7$  %1D
							  &
							  & $3.805e-4$ & $3.802e-4$  %2D
							  \\
							  & $2.457e-7$ & $2.450e-7$  %1D
							  &
							  & $3.805e-4$ & $3.802e-4$  %2D
							  \\
			\bottomrule									
		\end{tabular}
		\caption{Error with $h = 2^{-6}$ fixed  and $\Delta t=2^{-(i+1)}, ~i=1,\cdots,6$}
		\label{tab:errorL2_deltat}
	\end{table}	

	In the Table \ref{tab:errorL2_h} is the opposite, the parameter $\Delta t = 2^{-7}$ is fixed and the $h$ value are varying  $ h=2^{-i}, ~i=1,\cdots,6$.
%into $ I_2 = \ {0.2,~0.1,~0.05,~0,025,~0,0125}$,

	\begin{table}[htbp]
		\centering
		\begin{tabular}{c cc c cc}
			\toprule
			\multicolumn{6}{l}{$E_{L^{\infty}(0,T;L^2(\Omega))}$}\\
			\midrule
							  & \multicolumn{2}{c}{$\mathbb{R}$} & & \multicolumn{2}{c}{$\mathbb{R}^2$} \\ \cmidrule(lr){2-3} \cmidrule(lr){5-6}
					 		  & B$_{1}$    & B$_{2}$     
					 		  & 
					 		  & B$_{1}$    & B$_{2}$    \\
			\midrule
						  	  & $5.463e-3$ & $5.462e-3$  %1D
						  	  &
						  	  & $9.895e-3$ & $9.895e-3$  %2D
						  	  \\
							  & $2.750e-3$ & $2.751e-3$  %1D
							  &
							  & $2.105e-3$ & $2.105e-3$  %2D
							  \\
					  S$_1$   & $6.417e-4$ & $6.419e-4$  %1D
						  	  &
						  	  & $6.283e-4$ & $6.283e-4$  %2D
						  	  \\
							  & $2.076e-4$ & $2.077e-4$  %1D
							  &
							  & $1.889e-4$ & $1.889e-4$  %2D
							  \\
							  & $5.507e-5$ & $5.477e-5$  %1D
							  &
							  & $5.837e-5$ & $5.837e-5$  %2D
							  \\
							  & $5.380e-5$ & $5.350e-5$  %1D
							  &
							  & $4.837e-5$ & $4.837e-5$  %2D
							  \\					  
			\midrule	
						  	  & $5.254e-5$ & $5.254e-5$  %1D
						  	  &
						  	  & $1.000e-9$ & $1.000e-9$  %2D
						  	  \\
							  & $2.745e-5$ & $2.745e-5$  %1D
							  &
							  & $2.200e-10$ & $2.200e-10$  %2D
							  \\
					  S$_2$   & $6.383e-6$ & $6.383e-6$  %1D
						  	  &
						  	  & $7.000e-11$ & $7.000e-11$  %2D
						  	  \\
							  & $1.984e-6$ & $1.984e-6$  %1D
							  &
							  & $2.335e-11$ & $2.335e-11$  %2D
							  \\
							  & $2.460e-7$ & $2.453e-7$  %1D
							  &
							  & $2.205e-11$ & $2.205e-11$  %2D
							  \\
							  & $2.457e-7$ & $2.450e-7$  %1D
							  &
							  & $2.001e-11$ & $2.001e-11$  %2D
							  \\
			\bottomrule									
		\end{tabular}
		\caption{Error with $\Delta t = 2^{-7}$ fixed and $h=2^{-i}, ~i=1,\cdots,6$}		
		\label{tab:errorL2_h}
	\end{table}		

%In both tables are presented the errors between the exact solution and the approximate numerical solution in the $L^\infty(0,T;L^2(0,1)) $ norm.

	Finally, we present the order of numerical convergence in the Table \ref{tab:errorL2}, $h=2\Delta t$  and varying $\Delta t=2^{-(i+1)}, ~i=1,\cdots,6$, which coincides with the expected theoretical results
obtained in Theorem \ref{teo:tempoContinuo} and Theorem \ref{teo:tempoDiscreto} demonstrated in this paper, what suggests to us, that these results are valid also for $\theta = 1/4$.	

	\begin{table}[htbp]
		\centering
		\begin{tabular}{c cc c cc}
			\toprule
			\multicolumn{6}{l}{Convergence Rate}\\
			\midrule
							  & \multicolumn{2}{c}{$\mathbb{R}$} & & \multicolumn{2}{c}{$\mathbb{R}^2$} \\ \cmidrule(lr){2-3} \cmidrule(lr){5-6}
					 		  & B$_{1}$    & B$_{2}$     
					 		  & 
					 		  & B$_{1}$    & B$_{2}$    \\
			\midrule
						  	  & $-$ & $-$  %1D
						  	  &
						  	  & $-$ & $-$  %2D
						  	  \\
							  & $2.466$ & $2.460$  %1D
							  &
							  & $2.208$ & $2.208$  %2D
							  \\
					  S$_1$   & $2.434$ & $2.429$  %1D
						  	  &
						  	  & $2.475$ & $2.475$  %2D
						  	  \\
							  & $2.014$ & $2.031$  %1D
							  &
							  & $2.095$ & $2.095$  %2D
							  \\
							  & $1.953$ & $1.954$  %1D
							  &
							  & $1.884$ & $1.884$  %2D
							  \\
							  & $1.947$ & $1.947$  %1D
							  &
							  & $1.955$ & $1.955$  %2D
							  \\					  
			\midrule	
						  	  & $-$ & $-$  %1D
						  	  &
						  	  & $-$ & $-$  %2D
						  	  \\
							  & $2.087$ & $2.085$  %1D
							  &
							  & $1.552$ & $1.552$  %2D
							  \\
					  S$_2$   & $2.093$ & $2.093$  %1D
						  	  &
						  	  & $1.922$ & $1.922$  %2D
						  	  \\
							  & $2.061$ & $2.061$  %1D
							  &
							  & $2.002$ & $2.002$  %2D
							  \\
							  & $2.037$ & $2.037$  %1D
							  &
							  & $1.982$ & $1.982$  %2D
							  \\
							  & $2.018$ & $2.018$  %1D
							  &
							  & $2.009$ & $2.009$  %2D
							  \\
			\bottomrule									
		\end{tabular}
		\caption{Convergence rate with $h=2\Delta t$ and $\Delta t= 2^{-(i+1)}, ~i=1,\cdots,6$}	
		\label{tab:errorL2}
	\end{table}		

	\subsection{Asymptotic Behavior}
	
	In Figure \ref{fig:energys_1D} we present the decay of the energy of the homogeneous solution of the mobile problem \eqref{prob:movel} for all $t\in[0,T^*]$, were $T^*$ is the time when the energy is of the order of $10^{-10}$. We consider the initial conditions taking $t=0$ in the exact functions of the Table \ref{tab:samples} and the moving borders of the Table \ref{tab:boundaries}. 
	\begin{figure}[htb]
		\centering
		\subfigure[Example S$_1$]{\includegraphics[scale=.425]{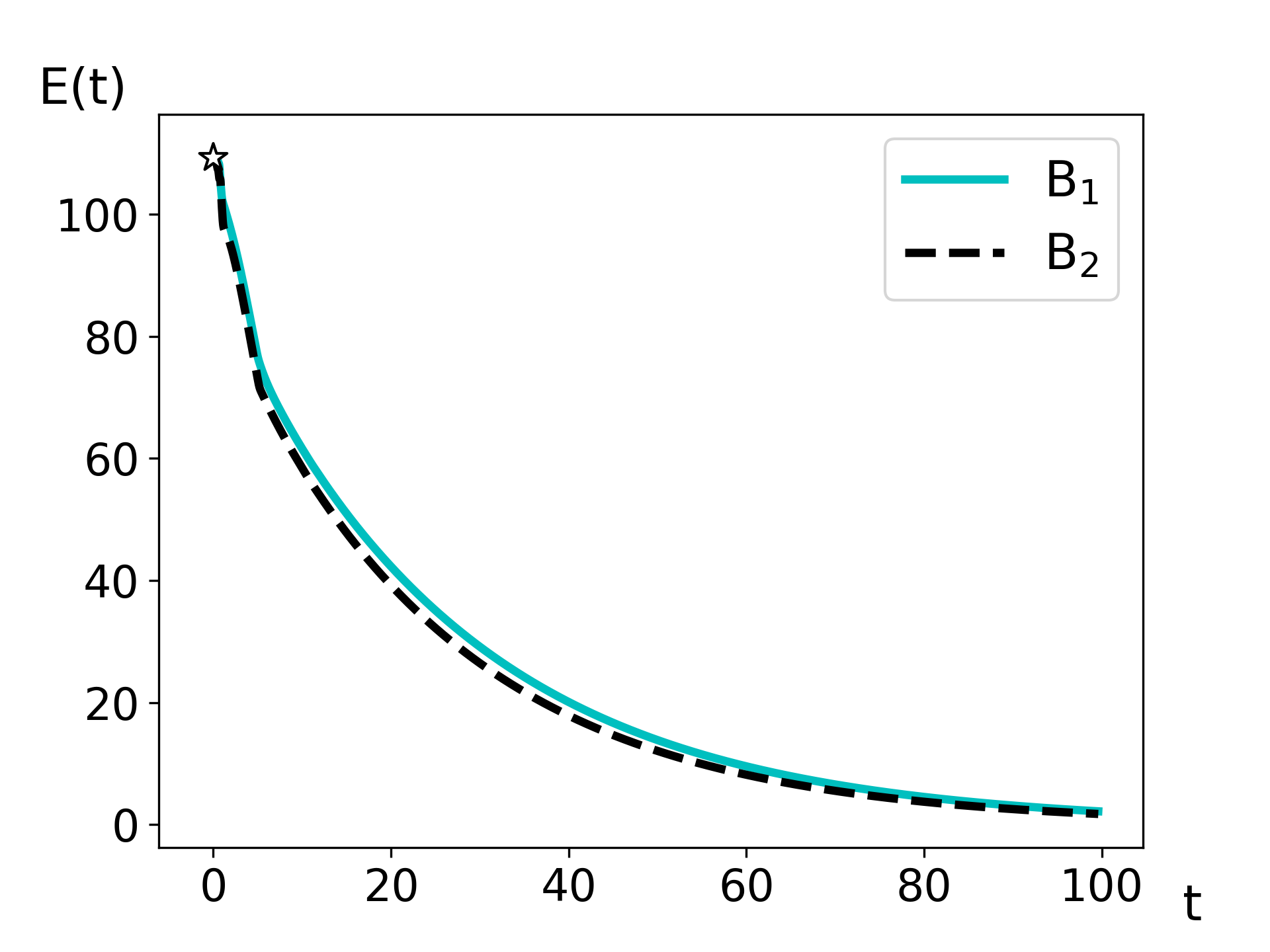}}
		\subfigure[Example S$_2$]{\includegraphics[scale=.425]{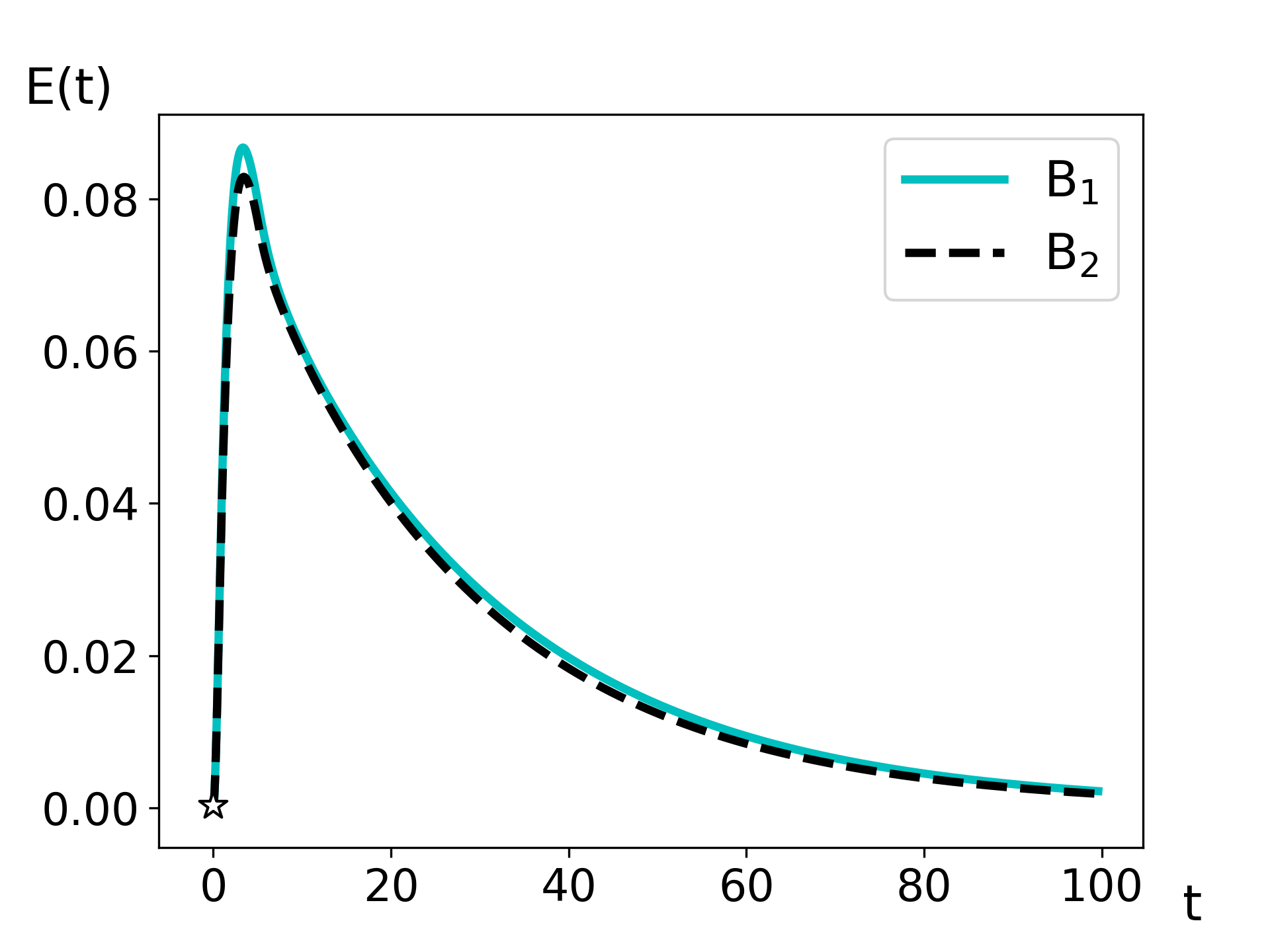}}
		\caption{Asymptotic behavior of energy}	
		\label{fig:energys_1D}		
	\end{figure}
%	\begin{figure}[htb]
%		\centering	
%		\subfigure[Example S$_1$ whit $\Omega\in\mathbb{R}^2$]{\includegraphics[scale=.425]{1D_decaimento_S1.png}}
%		\subfigure[Example S$_2$ whit $\Omega\in\mathbb{R}^2$]{\includegraphics[scale=.425]{1D_decaimento_S2.png}}
%		\caption{Asymptotic behavior of energy two-dimensional}	
%		\label{fig:energys_2D}
%	\end{figure}
	
	In the simulations we obtained an energy of the order of $10^{-10}$ at the approximate times $T^*=770$, in example S$_1$, and $T^*=560$, in example S$_2$ , for both moving borders. In Figure \ref{fig:fronteiras} we present the evolution of the moving boundaries.
%	\begin{table}[htb]
%		\centering
%		\begin{tabular}{c cc}
%			\toprule
%			\multicolumn{3}{l}{$T^*$}\\
%			\midrule
%				  & B$_{1}$ & B$_{2}$ \\
%			\midrule
%			S$_1$ & 762     & 742     \\ 
%			S$_2$ & 557     & 555     \\ 
%			\bottomrule
%		\end{tabular}
%		\caption{Time for low energy}
%		\label{tab:decaimento_T_final}
%	\end{table}
	\begin{figure}[htb]
		\centering	
		\includegraphics[scale=0.425]{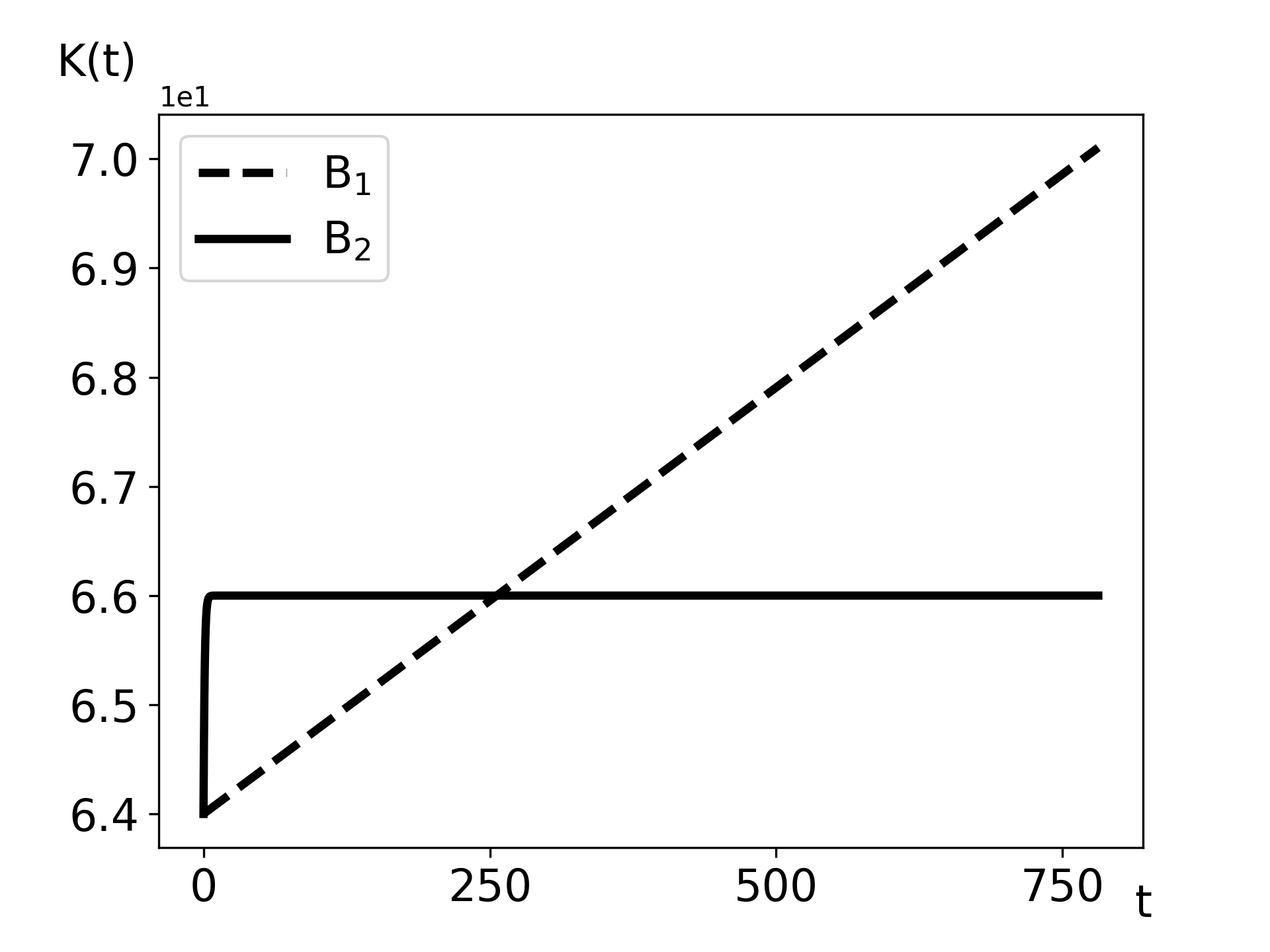}
		\caption{Moving boundaries}
		\label{fig:fronteiras}				
	\end{figure}

	\subsection{Homogeneos Solution}	
	
	Now we will present the evolution of the numerical solution of the problem \eqref{prob:fixa}, obtained from the \eqref{newton_n>0}, by considering $f(y,t) = 0$ and the initial conditions from taking $t=0$ in exact functions of Table \ref{tab:samples}.	
	
	In the simulations were considered $h =2^{-6}$ and $\Delta t=2^{-7}$. The following parameters were prefixed $\zeta_0 = 128$, $\zeta_1 = 2$ and $\nu = 1$.			
	\begin{figure}
		\centering
		\includegraphics[scale=0.425]{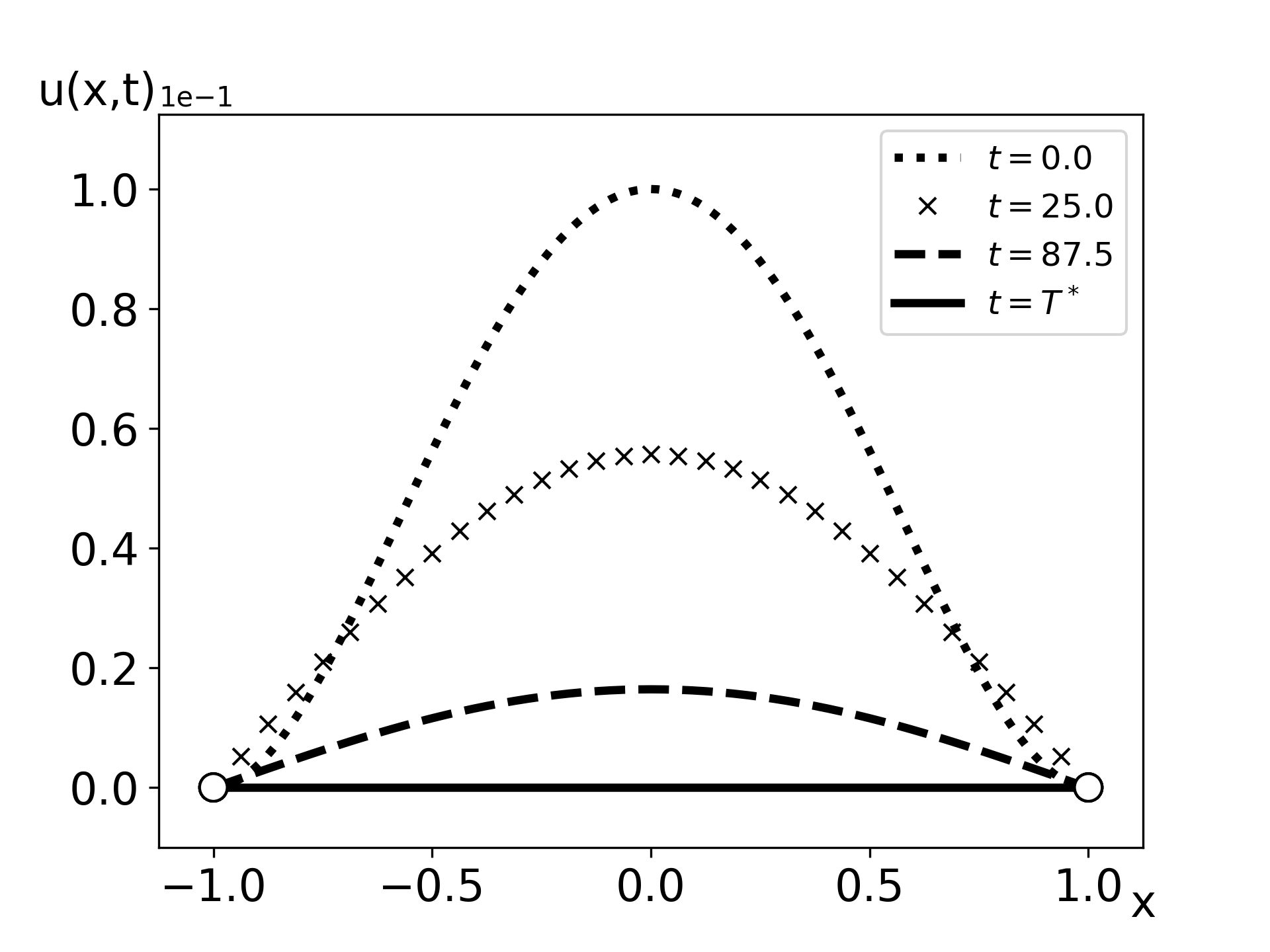}
%		\subfigure[two-dimensional]{\includegraphics[scale=0.425]{2D_solucao_S1.png}}		
		\caption{Homogeneous solutions for example S$_1$}
		\label{fig:solution_S1}
	\end{figure}
	\begin{figure}
		\centering
		\subfigure[Under the effect of the initial velocity]{\includegraphics[scale=0.425]{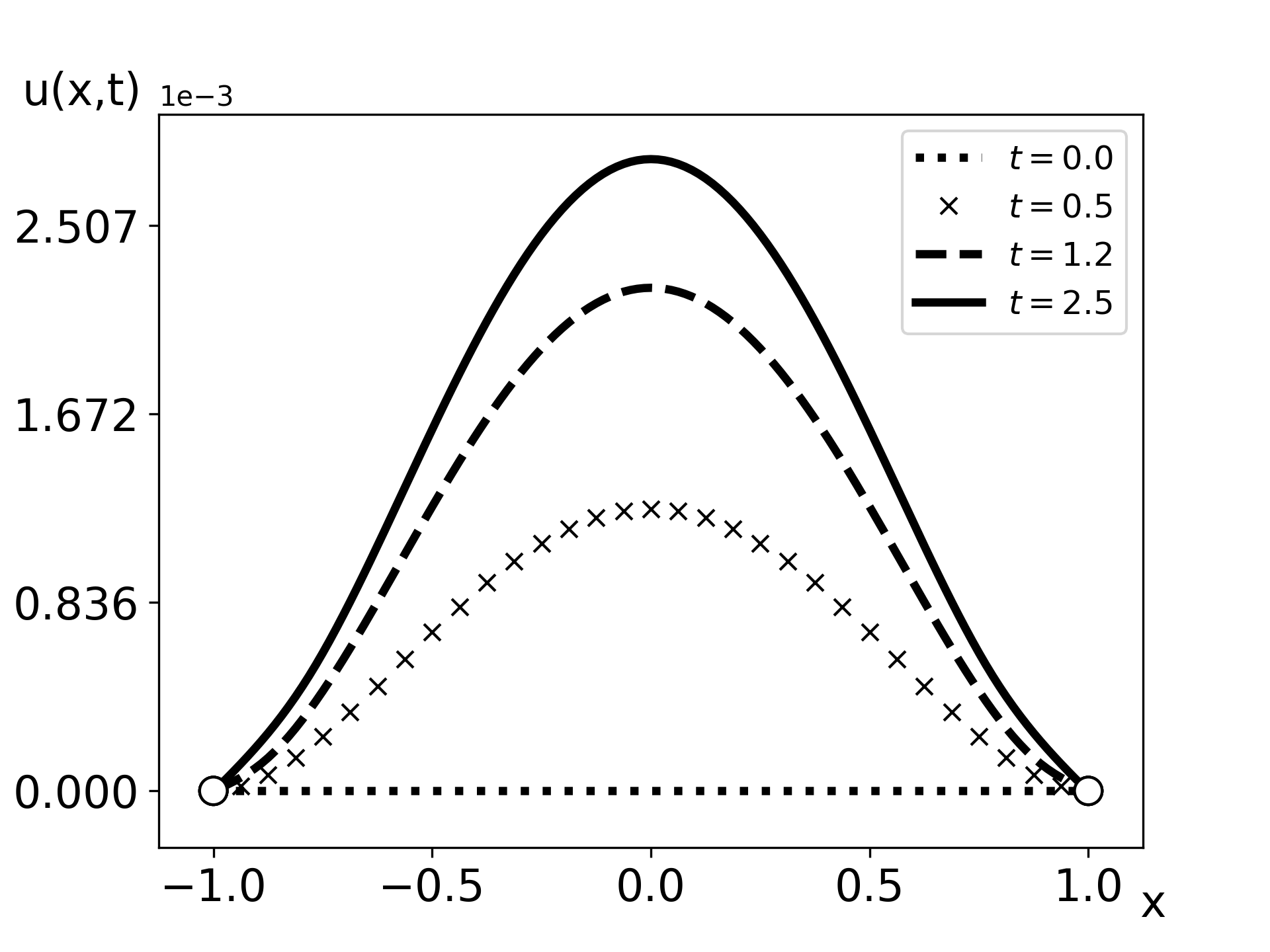}}
		\subfigure[Post-effect of initial velocity]{\includegraphics[scale=0.425]{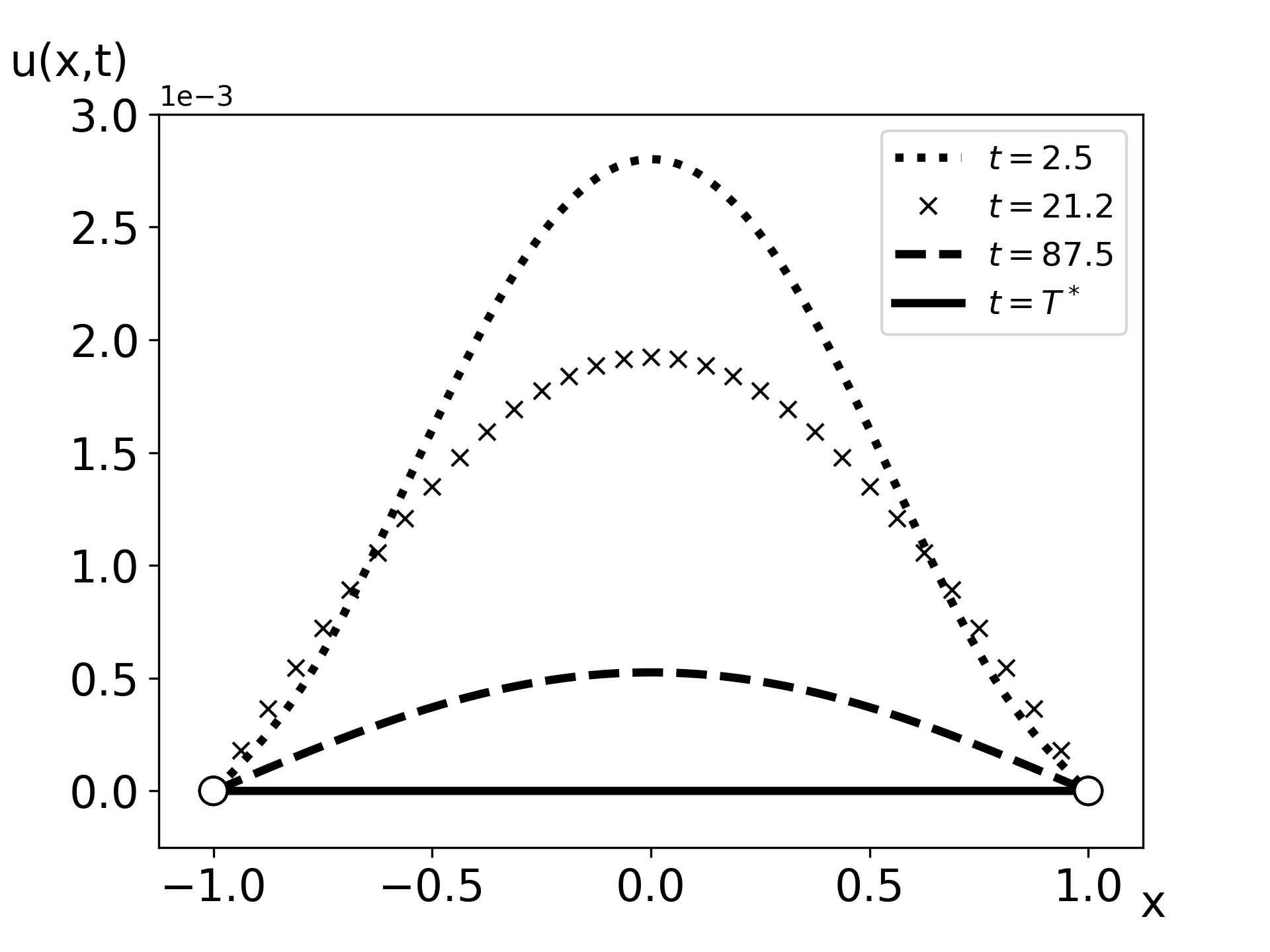}}
		\caption{Homogeneous solutions for example S$_2$}
		\label{fig:solution_S2_1D}		
	\end{figure}	
%	\begin{figure}
%		\centering
%		\subfigure[Under the effect of the initial velocity]{\includegraphics[scale=0.425]{2D_solucao_S2_up.png}}
%		\subfigure[Post-effect of initial velocity]{\includegraphics[scale=0.425]{2D_solucao_S2_down.png}}
%		\caption{Homogeneous solutions one-dimensional for example S$_2$}
%		\label{fig:solution_S2_2D}		
%	\end{figure}	

% ====================================================================================================================================================================== %
	
\section{Conclusions}\label{sec:conc}

	In this paper, we focused our study on the numerical analysis of Problem \eqref{edp:movel}, introducing a numerical method and demonstrating the order of convergence for both the semi-discrete problem and the totally discrete problem. Initially, we announced the known theoretical results in the presence of moving boundary and damping. In addition we announce the asymptotic decay.
We introduce a family of numeric methods, based on a theta parameter, and show numerically that for $\theta\in[0, 1/4[$, the numerical method is conditionally convergent (as expected) and for $\theta\in[1/4, 1]$ the convergence is unconditionally convergent (see the Tables when $h$  or $\Delta t$ are varying). In addition, we can verify that the numerical error is smaller the smaller the $\theta$ limited to $[1/4, 1]$. Finally, let us show the decay of the system's energy.

	It is worth emphasizing that the presented numerical error results were in complete agreement with the results presented in the theoretical part, and they also indicate the reliability of this numerical method for obtaining good approximate solutions of these nonlinear problems.


\begin{thebibliography}{99}

\bibitem{nonLinearIguall} S. Bagheri, A. Nikkar e H. Ghaffarzadeh, ``{Study of nonlinear vibration of Euler-Bernoulli beams by using analytical approximate techniques}'', {\em Latin American Journal of Solids and Structures}, 11 (2014) 157--168.

\bibitem{nonLinearIgualll} P. Z. Bar-Yoseph, D. Fisher e O. Gottlieb, ``{Spectral element methods for nonlinear spatio-temporal dynamics of an Euler-Bernoulli beam}'', {\em Computational Mechanics}, 19 (1996) 136--15.

\bibitem{nonLinearIgual} A. Barari, H. D. Kaliji, M.Ghadimi e G. Domairry, ``{Non-linear vibration of Euler-Bernoulli beams}'', {\em Latin American Journal of Solids and Structures}, 8 (2011) [S.p.].

\bibitem{AbstractBeam1} P. Biler, ``{Remark on the decay for damped string and beam equations}'', {\em Nonlinear Analisys}, 10 (1986) 839--842.

\bibitem{viga2D} C. S. Q. Caldas, J. Limaco e R. K. Barreto, ``{Remarks on the beam evolution equations in noncylindrical domains}'', {\em Nonlinear Analysis}, 70 (2009) 693--710.

\bibitem{chou3}  S.-I. Chou e C.-C.Wang, ``{Error estimates of finite element approximations for problems in linear elasticity - Part 3. Problems in elastodynamics discrete time approximations}'', {\em Archive for Rational Mechanics and Analysis}, 73 (1980) 159--182.

\bibitem{ciarlet} P.G. Ciarlet, ``{The Finite Element Method for Elliptic Problems}'', North-Holland, Amsterdam, 1978.

\bibitem{viga1D} H. R. Clark, M. A. Rincon e R. D. Rodrigues, ``{Beam Equation with weak-internal damping in domain with moving boudary}'', {\em Applied Numerical Mathematics}, 47 (2003) 139--157.

\bibitem{ExistUnicBeam} R. W. Dickey, ``{The initial value problem for a nonlinear semi-infinite string}'', {\em Proceedings of the Royal Society Edinburgh Section A}, 82 (1978) 19--26.

\bibitem{beamMatlab} S. R. Gunakata, D. M. G. Comissiong, K. Jordan e A. Sankar, ``{A Finite Element Solution of the Beam Equation via Matlab}'', {\em International Journal of Applied Science and Technology}, 2 (2012) 80--88.

\bibitem{studyBeamEquations} S. M. Han, H. Benaroya e T. Wei, ``{Dynamics of Transversely Vibrating Beams Using Four Engineering Theories}'', {\em Journal of Sound and Vibration}, 225 (1999) 935--988.

\bibitem{nasa1} D. H. Hodgese e E. H. Dowell, ``{Nonlinear equations of motion for the elastic bending and torsion of twisted nonuniform rotor blades}'', [S.l.]; {\em National Aeronautics and Space Adiministration (NASA)}, 1974 (NASA TN D-7818).

\bibitem{nasa2} D. H. Hodgese e R. A. Ormiston, ``{On the Nonlinear Deformation Geometry of Euler-Bernoulli}'', [S.l.]; {\em National Aeronautics and Space Adiministration (NASA)}, 1960 (NASA A-7985).

\bibitem{Kirchhoff1} G. Kirchhoff, ``{Vorlesungen \"{u}ber Mechanik}'', Tauber, Leipzig, 1883.

\bibitem{Kirchhoff2} L. A. Medeiros, J. L. Ferrel e S. B. Menezes, ``{Vibrations of elastic strings: Mathematical aspects I and II}'', {\em Journal of Computational Analysis and Applications}, 4(2) (2002) 91--127; 4(3) (2002) 211-263.

\bibitem{maxwellBeam} S. Narendar, S. S. Gupta e S. Gopalakrishnan, ``{Wave Propagation in Single-Walled Carbon Nanotube Under Longitudinal Magnetic Field Using Nonlocal Euler-Bernoulli Beam Theory}'', {\em Applied Mathematical Modelling}, 36 (2012) 4529--4538.

\bibitem{beamMEF1} G. S. Payette e J. N. Reddy, ``{Nonlinear quasi-static finite element formulations for viscoelastic Euler-Bernoulli and Timoshenko beams}'', {\em International Journal for Numerical Methods in Biomedical Engineering}, 26 (2010) 1736--1755.

\bibitem{AbstractBeam2} D. C. Pereira, ``{Existence, uniqueness and asymptotic behavior for solutions of the nonlinear bema equation}'', {\em Nonlinear Analisys}, 8 (1990) 613--623.

\bibitem{beamCilindrico} M. Tucsnak, ``{Semi-internal Stabilization for a Nonlinear Bernoulli-Euler Equation}'', {\em Mathematical Methods in the Applied Sciences}, 19 (1996) 897-907.

\bibitem{beamMEF2} O. Weeger, U. Wever and B. Simeon, ``{Isogeometric analysis of nonlinear Euler-Bernoulli beam vibrations}'', {\em Nonlinear Dynamics}, 72 (2013) 813--835.

%\bibitem{Dupont}
% T.Dupont , ``{$L^2 \_$  estimates for Galerkin methods for second order hyperbolic equations}'',  {\em Journal on Numerical Analisys}, 10 (1973) 880-889.
%
%\bibitem{Wheeler}
%M.F. Wheeler, ``{$L_\infty$ estimates of optimal orders for Galerkin methods for one-dimensional second order parabolic and hyperbolic equations}'', {\em Journal on Numerical Analisys}, 10 (1973) 908-913.
%

\end{thebibliography}
\end{document}